
\documentclass{birkjour}
%
%
%

 
 \usepackage{amsthm,amsfonts,amssymb,amsmath}

\numberwithin{equation}{section}

\def \ker {{\rm ker}\,}

\def \dim {{\rm dim}\,}

\newcommand{\uu}{\underline}

\newcommand{\m}{\mathcal}

\theoremstyle{plain}
\newtheorem{theorem}{Theorem}[section]
\newtheorem{corollary}[theorem]{Corollary}
\newtheorem{lemma}[theorem]{Lemma}
\newtheorem{proposition}[theorem]{Proposition}

\newtheorem{definition}[theorem]{Definition}

 \newtheorem{example}[theorem]{Example}

\begin{document}

%
%
%
%
%
%
%
%
%

\title[Functional Models and
 Minimal Contractive Liftings]
 {Functional Models and
 Minimal Contractive Liftings}

\author[Santanu Dey]{Santanu Dey}
\address{%
Department of Mathematics, Indian Institute of Technology Bombay, \\
Powai,
Mumbai-400076, India
}
\email{dey@math.iitb.ac.in}

\author{Rolf Gohm}
\address{
Department of Mathematics and Physics, Aberystwyth University, \\
Aberystwyth SY23 3BZ, United Kingdom
}
\email{rog@aber.ac.uk}

\author[Kalpesh J. Haria]{Kalpesh J. Haria}
\address{%
Department of Mathematics, Indian Institute of Technology Bombay, \\
Powai,
Mumbai-400076, India
}
\email{kalpesh@math.iitb.ac.in}

\subjclass{47A20, 47A13, 47A15, 46L53, 46L05}

\keywords{characteristic function, minimal contractive lifting,  row
contraction, multi-analytic, completely non-coisometric, Schur function}

\date{June 18, 2014}

\begin{abstract}
Based on a careful analysis of functional models for contractive multi-analytic operators we establish a one-to-one correspondence between unitary equivalence
classes of minimal contractive liftings of a row contraction and injective symbols of contractive multi-analytic operators. This allows an effective
construction and classification of all such liftings with given defects. Popescu's theory of characteristic functions of completely non-coisometric
row contractions is obtained as a special case satisfying a Szeg\"{o} condition. In another special case of single contractions and defects equal to
$1$ all non-zero Schur functions on the unit disk appear in the classification. It is also shown that the process of constructing liftings iteratively
reflects itself in a factorization of the corresponding symbols.
\end{abstract}

\maketitle

\section{Introduction}

Functional models from analytic functions were developed by Sz.Nagy-Foias \cite{NF70} and used for classifying contractive operators on Hilbert spaces. A similar approach
was used by Popescu in  \cite{Po89b} for classifying row contractions by certain multi-analytic operators. These classifying objects were called characteristic functions. In \cite{DG07} and more explicitly in \cite{DG11} characteristic
functions of liftings of row contractions were introduced and it was shown that they are complete invariants for unitary equivalence in a certain class of liftings.
Here we present an approach which is based on a systematic use of associated functional models which on the one hand exhibits Popescu's characteristic functions as special cases of characteristic functions of liftings and which on the other hand fully discloses the additional potentials of our generalization.
\\

\noindent
Let us immediately introduce the two main players.

(1) Let $\Gamma := \bigoplus^\infty_{n=0} (\mathbb C^d)^{\otimes n}$, the {\em full Fock space} over $\mathbb C^d$, and let $\{e_1, \ldots,e_d\}$ be the standard basis of ${\mathbb C}^d$.
Denote the left creation operator w.r.t. $e_j$ on $\Gamma$ by $L_j$, that is $L_j x = e_j \otimes x$ for $x \in (\mathbb C^d)^{\otimes n}$.  Let $\m D$ and $\m L$ be Hilbert spaces.
A linear operator $M_\Theta: \Gamma \otimes \m D \to \Gamma \otimes \m L$ is called {\em multi-analytic} (cf. \cite{Po95}) if it intertwines $L_j \otimes I$
for all $j=1,\ldots,d$. A multi-analytic operator $M_\Theta$ is determined by its symbol
$\Theta: \m D \to \Gamma \otimes \m L$
defined by $\Theta(\delta) := M_\Theta (e_\emptyset \otimes \delta)$ for all $\delta \in \m D$,
here $e_\emptyset$ denotes the standard basis vector of $(\mathbb C^d)^{0} = \mathbb C$.
We call $M_\Theta$ {\em contractive} if $\| M_\Theta \| \le 1$. Note that for $d=1$ and $\dim \m D = 1 = \dim \m L$ a contractive multi-analytic operator corresponds exactly to multiplication with a function in the Schur class, i.e., a bounded analytic function in the open unit disk with supremum norm at most $1$.
     
(2) A $d$-tuple $\uu C = (C_1, \ldots, C_d)$ of operators on a Hilbert space $\m H_C$ is called a {\em row contraction} if it is a contraction from
$\bigoplus^d_1 \m H_C$ to $\m H_C$ or, equivalently, if $\sum_j C_j C^*_j \le  I$.
If  a $d$-tuple $\uu E=(E_1,\ldots,E_d)$ on a Hilbert space $\m H_E = \m H_C \oplus \m H_A$ can be written in the form
 \[
 \uu E = \begin{pmatrix}
           \uu C   &  \uu 0 \\
 	     \uu B   &  \uu A
          \end{pmatrix}
 \]
for suitable $d$-tuples $\uu B$ and $\uu A$ then $\uu E$ is called a {\em lifting} of $\uu C$. The lifting is called {\em contractive} if $\uu E$ is still a row contraction and it is called {\em minimal} if $\m H_E$ is the smallest $\uu E$-invariant subspace containing $\m H_C$.
(By $\uu E$-invariance we mean invariance for all $E_j,\;j=1,\ldots,d$.)
We remark that it presents no particular difficulties to include sequences of operators ($d=\infty$) but we write all formulas as if $d$ is finite.

To establish a correspondence between the two main players we start with a detailed discussion of functional models. We use the generalized setting introduced by Popescu in \cite{Po89b} to study row contractions. The case $d=1$ (single contractions) is of course of special interest and our results are new also for $d=1$ but these results work just as well for general $d$.
This observation is important because from the case $d>1$ there are promising applications to
the dynamics of open quantum systems, see \cite{Go12} for an introduction to this topic and further references along these lines. The impact of our results on these applications will be worked out elsewhere.

In Section 2 we prove properties of functional models to be used later. The results about functional models which are new depend on observations about the geometry of an invariant subspace for a row isometry which we derive from a geometric lemma proved in an appendix to this paper. They establish relations between the positions of certain subspaces on the one hand and properties of the symbol $\Theta$ of the multi-analytic operator on the other hand.

Section 3 is the core of the paper.
Based on the results about the functional model and extending ideas from \cite{DG11}, we work out a mapping $\m E$ which, for any given row contraction $\uu C$, maps contractive multi-analytic operators $M_\Theta: \Gamma \otimes \m D \to \Gamma \otimes \m D_C$, where $\m D_C$ is the defect space of $\uu C$, to contractive liftings $\uu E$ of $\uu C$.
In the converse direction we make use of the theory of the minimal isometric dilation for the contractive lifting $\uu E$ to construct a mapping $\m M$ from contractive liftings $\uu E$  of a given row contraction $\uu C$ to
contractive multi-analytic operators $M_\Theta: \Gamma \otimes \m D \to \Gamma \otimes \m D_C$.
Suitably restricted, the maps $\m E$ and $\m M$ become inverses of each other and we obtain a one-to-one correspondence between unitary equivalence classes of minimal contractive liftings and equivalence classes of injective symbols. This justifies to call the corresponding multi-analytic operators characteristic functions of liftings, as has been done already in \cite{DG11}. With the theory developed here we provide a complete answer to the open problem posed at the end of Section 3 in \cite{DG11}, namely to classify the multi-analytic operators which can occur as characteristic functions of liftings. The surprisingly simple answer, all that is needed is the easily checked property of injectivity of the symbol, shows that the connection with liftings is a very natural application of multi-analytic operators and makes it now much easier to develop the applications.
We can always study minimal contractive liftings via the corresponding symbols. As an example of such an application we study the factorization of liftings with the help of the corresponding factorization of the characteristic functions 
(compare \cite{NF70}, \cite{Po06a}).

In Section 4 we revisit Popescu's work in \cite{Po89b} where he defined a characteristic function for a completely non-coisometric row contraction as a certain multi-analytic operator. We show that this can be considered as a special case of our theory in the sense that Popescu's characteristic functions appear as characteristic functions of a special type of liftings and that the property of being a complete invariant for unitary equivalence follows from the corresponding result about liftings in Section 3.

In Section 5 we look at a class of examples: minimal contractive liftings $E$ of a single contraction $C$ such that both $C$ and $E$ have defect equal to $1$. By our theory the unitary equivalence classes of these liftings are in one-to-one correspondence with non-zero Schur functions on the unit disk (up to unimodular complex factors). This gives us an opportunity to illustrate many of the previous results by easily computable examples. Already in this case in the future there is a lot more to find out about these liftings by making a more systematic use of what is known about Schur functions. For example, operator-valued Schur functions have been used for commutant lifting, see \cite{FF90} and more recently \cite{FHK06}.

Related work in different directions is done in  \cite{Ar98}, \cite{BV05}, \cite{DH14}, \cite{FF90}, \cite{FHK06}, \cite{FS12}, \cite{Go09} and \cite{Po06b}. But the explicit parametrization of all minimal contractive liftings achieved here is new and provides an excellent basis for studying applications, for example the dynamics of open quantum systems mentioned earlier. In fact in the special case of the minimal isometric dilation it is long known and well studied how it describes the embedding of open into closed quantum systems, see for example \cite{DG07,Go12}. Let us finish this introduction with a short reminder of the well known theory of the {\em minimal isometric dilation} $\uu V^T$ of a row contraction $\uu T = (T_1,\ldots,T_d)$ which appears in several places in this paper. It was first presented in \cite{Po89a}, with small modifications we use the notation from \cite{DG11}. In both papers a lot of additional details can be found.
Recall that a $d$-tuple $\uu V = (V_1, \ldots, V_d)$ on a Hilbert space $\m H$ is called a {\em row isometry} if $\uu V^* \uu V = I$ or, equivalently, the $V_j$ are isometries with orthogonal ranges.
For any row contraction $\uu T = (T_1,\ldots,T_d)$ on a Hilbert space $\m H_T$ there exists a row isometry $\uu V = (V_1,\ldots,V_d)$
on a bigger Hilbert space such that
$T_\alpha = P_{\m H_T} V_\alpha |_{\m H_T}$ for all $\alpha$. Here we use, as in similar cases, the notation $P_X$ for the orthogonal projection onto $X$ and the notation $T_\alpha$ for any word $\alpha = \alpha_1 \ldots \alpha_m$ with letters $\alpha_k \in \{1,\ldots,d\}$
to stand for the operator $T_{\alpha_1} \ldots T_{\alpha_m}$.
We call $m = |\alpha|$ the length of the word.
For all $\alpha$ means here and later: for all such words of all possible lengths including the empty word $\emptyset$ of length $0$ which corresponds to the identity operator.
A row isometry $\uu V$ with this property is called an isometric dilation of $\uu T$. It is easily seen that this is actually a very specific example of a contractive lifting in the sense introduced earlier. 

If we require minimality for an isometric dilation in the sense that the bigger Hilbert space is the smallest closed $\uu V$-invariant space containing $\m H_T$ then this determines $\uu V$ up to unitary equivalence and we denote it by $\uu V^T$, on the Hilbert space $\hat{\m H}_T \supset \m H_T$.
There is a construction of the minimal isometric dilation analogous to the Sch\"{a}ffer construction for a single contraction:
Recall that the operator $D_T: \bigoplus^d_{i=1} \m H_T \rightarrow
\bigoplus^d_{i=1} \m H_T$ given by
$D_T :=(\delta_{ij} I -T_i^*T_j)^\frac{1}{2}_{d\times d}$
is called the {\em defect operator},
$\m D_T:=\overline{\mbox{range~} D_T}$ is called the {\em defect space}
and the (Hilbert space) dimension of $\m D_T$ is called the {\em defect} of $\uu T$.
The minimal isometric dilation $\uu V^T$ is then realized in a canonical way on the Hilbert space $\hat{\m H}_T := \m H_T \oplus (\Gamma \otimes \m D_T)$. Again $\Gamma := \bigoplus^\infty_{n=0} (\mathbb C^d)^{\otimes n}$ is the full Fock space over $\mathbb C^d$ and we note that  $\uu V^T$ restricted to $\Gamma \otimes \m D_T$ is nothing but $\uu L \otimes I$, with $\uu L = (L_1,\ldots, L_d)$, the $d$-tuple of creation operators on $\Gamma$. We refer to the row contraction $\uu L \otimes I$ as the {\em canonical row shift} and to $e_\emptyset \otimes \m D_T$ as its {\em generating wandering subspace}. (As usual, the subspace is called wandering for $\uu L \otimes I$ because its translates $L_\alpha \otimes I \,(e_\emptyset \otimes \m D_T)$ over all words $\alpha$, including the empty word $\emptyset$, are orthogonal to each other.)
The minimal isometric dilation $\uu V^T$ is obtained from any isometric dilation $\uu V$ by restriction. The unitary equivalence to the canonical construction above is expressed by a canonical unitary from the defect space $\m D_T$ onto the $\uu V$-wandering subspace $\m L_T := \overline{\rm{span}} \{\m H_T, \uu V (\displaystyle \bigoplus_{1}^d \m H_T)\} \ominus \m H_T$ given by
$D_T (\bigoplus^d_{i=1} \xi_i) \mapsto
\sum^d_{i=1} (V_i-T_i) \xi_i$. 
It can be extended to a canonical unitary from
$\m H_T \oplus (\Gamma \otimes \m D_T)$ onto
$\m H_T \oplus \bigoplus_\alpha V_\alpha \m L_T$ (over all words $\alpha$, including the empty word $\emptyset$) intertwining the two versions of the minimal isometric dilation.
Compare \cite{NF70} and \cite{Po89a,Po89b}.

\section{Functional Models}

As already defined in the Introduction, let $\Gamma$ be the full Fock space over $\mathbb C^d$ and $\{e_1, \ldots,e_d\}$ be the standard basis of ${\mathbb C}^d$. Denote the left creation operator w.r.t. $e_j$ on $\Gamma$ by $L_j$. Let $M_\Theta: \Gamma \otimes \m D \to \Gamma \otimes \m L$ be a contractive multi-analytic operator where
 $\m D$ and $\m L$ are Hilbert spaces. We now define the model space
 $\m H_\Theta :=  (\Gamma \otimes \m L) \oplus \overline{\Delta_{\Theta}(\Gamma \otimes \m D)}$
 where $\Delta_{\Theta} := (I- M_\Theta^* M_\Theta)^\frac{1}{2}$ and an isometric map
 $W_\Theta: \Gamma \otimes \m D  \to  \m H_\Theta$ by
 \[
 W_\Theta \xi := M_\Theta \xi \oplus \Delta_{\Theta} \xi ~~{\rm for~ all} ~\xi \in \Gamma \otimes \m D.
 \]
 Since $ W_\Theta$ is isometric, its range
$\m H_{\m D} := \{M_\Theta \xi \oplus \Delta_{\Theta} \xi:  \xi \in \Gamma \otimes \m D\}$ is closed. Note that by this construction we can think of $M_\Theta$ as the orthogonal projection onto $(\Gamma \otimes \m L) \oplus 0$ restricted to $\m H_{\m D}$.

Further define the $d$-tuple $\uu V = (V_1, \ldots, V_d)$  on the Hilbert space
$\m H_\Theta =  (\Gamma \otimes \m L) \oplus \overline{\Delta_{\Theta}(\Gamma \otimes \m D)}$
by
\[
 V_j(\eta \oplus \Delta_{\Theta} \xi) := (L_j \otimes I) \eta \oplus   \Delta_{\Theta}(L_j \otimes I) \xi ~{\rm ~for~} \xi
 \in  \Gamma \otimes \m D,\, \eta \in \Gamma \otimes \m L.
\]
It is easy to see that $\uu V$ is a row isometry on $\m H_\Theta$.
Observe that
\[
 V_j ( M_\Theta \xi \oplus \Delta_{\Theta}\xi) =  (L_j \otimes I) M_\Theta \xi \oplus \Delta_{\Theta} (L_j \otimes I) \xi =
  M_\Theta (L_j \otimes I) \xi \oplus \Delta_{\Theta} (L_j \otimes I) \xi
 \]
for $\xi \in \Gamma \otimes \m D,  j =1, \ldots, d$. Thus $\m H_{\m D}$ is an invariant subspace for $\uu V$.

Define $V_j^{\m D} := V_j|_{\m H_{\m D}}$ for $ j =1, \ldots, d$ and
denote the row isometric $d$-tuple $(V_1^{\m D}, \ldots, V_d^{\m D})$ on $\m H_{\m D}$ by
$\uu V^{\m D}$.
Because for $\xi \in  \Gamma \otimes \m D$
\begin{eqnarray*}
 W_\Theta (L_j \otimes I) \xi &=&  M_\Theta (L_j \otimes I) \xi \oplus \Delta_{\Theta} (L_j \otimes I) \xi \\
 &=&  V_j ( M_\Theta \xi \oplus \Delta_{\Theta} \xi) =  V_j^{\m D} W_\Theta \xi
\end{eqnarray*}
for $ j =1, \ldots, d,$
we conclude that
 $ W_\Theta : \Gamma \otimes \m D \to \m H_{\m D}$ is a unitary operator
intertwining between the canonical row shift
on $\Gamma \otimes \m D$ and $\uu V^{\m D}$. Restriction to the generating wandering subspaces yields a unitary operator
\[
 W_\Theta|_{e_\emptyset \otimes \m D} :e_\emptyset \otimes \m D  \to \ker ({\uu V^{\m D}{\rm)}}^*.
\]
Further we define
\begin{eqnarray*}
\m H_A &:=& \m H_\Theta \ominus \m H_{\m D}  \\
\m L_A &:=& \overline{\rm{span}} \{\m H_A, \uu V (\displaystyle \bigoplus_{1}^d \m H_A)\} \ominus\m H_A,  \\
\m L_E &:=& \overline{\rm span}\{\m L_A, P_{\m H_{\m D}} (e_\emptyset \otimes \m L)\}.
\end{eqnarray*}
The use of the subscript $E$ will become clear in the context of the next section. The following geometric lemma helps to clarify the positions of these spaces.

\begin{lemma} \label{lem:geo}
\begin{itemize}
  \item [(a)]
$P_{\m H_{\m D}} \ker \uu V^* \subset \ker ({\uu V^{\m D}{\rm)}}^*$.
  \item [(b)]
$\ker ({\uu V^{\m D}{\rm)}}^*= \m L_{A} \oplus \big(\ker \uu V^*\cap\ \m H_{\m D} \big)$.
\end{itemize}
\end{lemma}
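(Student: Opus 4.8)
The plan is to describe everything through the wandering subspaces of the two row isometries involved. Recall that for a row isometry the ranges of the individual $V_j$ are mutually orthogonal, so $\ker \uu V^* = \bigcap_j \ker V_j^* = \m H_\Theta \ominus \bigoplus_{j=1}^d V_j \m H_\Theta$, and likewise $\ker (\uu V^{\m D})^* = \m H_{\m D} \ominus \bigoplus_{j=1}^d V_j \m H_{\m D}$ since $\uu V^{\m D}$ is again a row isometry. Two standard facts will carry the whole argument. First, because $\m H_{\m D}$ is $\uu V$-invariant, its orthogonal complement $\m H_A$ is invariant for each $V_j^*$, equivalently $P_{\m H_{\m D}} V_j^* P_{\m H_A} = 0$. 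Second, $(V_j^{\m D})^* = P_{\m H_{\m D}} V_j^*|_{\m H_{\m D}}$, so for $\eta \in \m H_{\m D}$ the condition $\eta \in \ker (\uu V^{\m D})^*$ is exactly $V_j^* \eta \in \m H_A$ for all $j$. I would record these at the outset and use the relations $V_i^* V_j = \delta_{ij} I$ freely.

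For (a), take $\eta \in \ker \uu V^*$, so $V_j^* \eta = 0$ for every $j$. Writing $\eta = P_{\m H_{\m D}} \eta + P_{\m H_A} \eta$ and applying $P_{\m H_{\m D}} V_j^*$, the second summand is killed by the coinvariance of $\m H_A$, leaving $(V_j^{\m D})^* P_{\m H_{\m D}} \eta = P_{\m H_{\m D}} V_j^* P_{\m H_{\m D}} \eta = 0$. Hence $P_{\m H_{\m D}} \eta \in \ker (\uu V^{\m D})^*$.

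For (b), write $\m W := \ker (\uu V^{\m D})^*$ and $\m K := \overline{\rm span}\{\m H_A, \uu V(\bigoplus_1^d \m H_A)\}$, so that $\m L_A = \m K \ominus \m H_A$ and $\m K = \m H_A \oplus \m L_A$. I would first check the two inclusions $\m L_A \subseteq \m W$ and $\ker \uu V^* \cap \m H_{\m D} \subseteq \m W$, then orthogonality. For the first, any element of $\m K$ is a limit of vectors $a + \sum_k V_k b_k$ with $a, b_k \in \m H_A$, and $V_j^*(a + \sum_k V_k b_k) = V_j^* a + b_j \in \m H_A$ by coinvariance, so $V_j^* \m L_A \subseteq \m H_A$ by continuity while $\m L_A \subseteq \m H_A^\perp = \m H_{\m D}$; this is precisely membership in $\m W$. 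The second inclusion is immediate from (a), since $P_{\m H_{\m D}} \eta = \eta$ for $\eta \in \m H_{\m D}$. Orthogonality $\m L_A \perp (\ker \uu V^* \cap \m H_{\m D})$ is clear, since a vector in $\ker \uu V^* \cap \m H_{\m D}$ is orthogonal both to $\m H_A$ and to every $V_j \m H_A$, hence to all of $\m K \supseteq \m L_A$.

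The crux is the spanning statement, which I would phrase as: any $\eta \in \m W$ with $\eta \perp \m L_A$ already lies in $\ker \uu V^* \cap \m H_{\m D}$. Here the two orthogonality hypotheses pin $V_j^* \eta$ from opposite sides. Since $\eta \in \m W \subseteq \m H_{\m D}$ we have $\eta \perp \m H_A$, and together with $\eta \perp \m L_A$ and $\m K = \m H_A \oplus \m L_A$ this gives $\eta \perp \m K$, in particular $\eta \perp V_j \m H_A$, i.e. $V_j^* \eta \in \m H_{\m D}$; but $\eta \in \m W$ forces $V_j^* \eta \in \m H_A$. As $\m H_{\m D} \cap \m H_A = 0$ we conclude $V_j^* \eta = 0$ for all $j$, so $\eta \in \ker \uu V^* \cap \m H_{\m D}$. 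Combined with the inclusions and orthogonality this yields $\m W = \m L_A \oplus (\ker \uu V^* \cap \m H_{\m D})$. The only place demanding care is this final \emph{squeeze}, where one must keep straight which orthogonality produces $V_j^* \eta \in \m H_{\m D}$ and which produces $V_j^* \eta \in \m H_A$; everything else is bookkeeping with the row-isometry relations.
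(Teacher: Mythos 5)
Your proof is correct and takes essentially the same approach as the paper: the paper derives this lemma by specializing a general appendix lemma on restricting an isometry to an invariant subspace (with $T=\uu V$, $\m K_2 = \m H_{\m D}$, $\m N_2 = \m H_A$), and the steps of that appendix proof --- the characterization of $\ker (\uu V^{\m D})^*$ as those $\eta \in \m H_{\m D}$ with $V_j^*\eta \in \m H_A$, the decomposition argument giving (a), and the inclusions-plus-orthogonality ``squeeze'' giving (b) --- correspond one-to-one to yours. The only difference is organizational: you instantiate the argument directly with the components $V_j$ and the relations $V_i^*V_j=\delta_{ij}I$, whereas the paper packages $\uu V$ as a single isometry between direct-sum spaces and quotes the abstract lemma.
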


\begin{proof}
Because this follows from a general argument about adjoints we have postponed this argument to an appendix. In Lemma \ref{inv}
we choose $T \in \m B(\m H_1, \m H_2)$ to be the
row isometry $\uu V \in \m B( \bigoplus^d_{j=1} \m H_\Theta, \m H_\Theta)$ and we choose
$\m K_1 := \bigoplus^d_{j=1} \m H_{\m D}$ and
$\m K_2 := \m H_{\m D}$, so that $T \m K_1 \subset \m K_2$ because $\m H_{\m D}$ is an invariant subspace. 
Then $\tilde{T} = \uu V^{\m D},\; \m N_1 =
\bigoplus^d_{j=1} \m H_A,\; \m N_2 = \m H_A,\; \m L = \m L_A$ and we get (a) from Lemma \ref{inv}(ii) and (b) from Lemma \ref{inv}(iii).  
\end{proof}

Now from Lemma \ref{lem:geo}(b) we have
$\m L_A \subset \ker ({\uu V^{\m D}{\rm)}}^*$. Further $e_\emptyset \otimes \m L \subset \ker\uu V^*$
which implies $P_{\m H_{\m D}} (e_\emptyset \otimes \m L) \subset P_{\m H_{\m D}}  \ker\uu V^*  \subset \ker (\uu V^{\m D})^*$, the last inclusion is Lemma \ref{lem:geo}(a).
Hence also $\m L_E \subset \ker ({\uu V^{\m D}{\rm)}}^*$ and we conclude that
\[
\m L_A \subset \m L_E \subset \ker ({\uu V^{\m D}{\rm)}}^*.
\]
In the following we determine how equalities in these inclusions depend on properties of the symbol $\Theta$.
Recall that the symbol $\Theta$ of $M_\Theta$ is defined by
$\Theta (\delta) := M_\Theta (e_\emptyset \otimes \delta)$ for $\delta \in \m D$. We also write $\Delta_{\Theta} (\delta)$ as a shorthand for the formally correct $\Delta_{\Theta}(e_\emptyset \otimes \delta)$.
Since $W_{\Theta} (e_\emptyset \otimes \m D)=  \ker (\uu V^{\m D})^*$, we have
\[
 x \in \ker (\uu V^{\m D})^* \quad\Leftrightarrow\quad x = \Theta (\delta) \oplus  \Delta_{\Theta} (\delta)~{\rm for~some}~ \delta \in \m D.
\]

\begin{lemma} \label{lem:fm}
For $ x = \Theta (\delta) \oplus  \Delta_{\Theta} (\delta)\in \ker (\uu V^{\m D})^*$ for some $\delta\in \m D$ we have
\begin{itemize}
  \item [(a)] $x \perp \m L_A \quad\Leftrightarrow\quad \Theta (\delta) \in  e_\emptyset \otimes \m L$.
   \item [(b)] $x \perp \m L_E \quad\Leftrightarrow\quad \Theta (\delta) = 0$.
\end{itemize}
\end{lemma}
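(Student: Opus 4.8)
The plan is to prove both parts by reducing each orthogonality statement to a condition on the degree-zero ($e_\emptyset$-) component of $\Theta(\delta)$, using the orthogonal decomposition of $\ker(\uu V^{\m D})^*$ already recorded in Lemma \ref{lem:geo}(b) as the main structural input.

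For part (a), the first step is to note that, since $x \in \ker(\uu V^{\m D})^* \subset \m H_{\m D}$, the decomposition $\ker(\uu V^{\m D})^* = \m L_A \oplus (\ker \uu V^* \cap \m H_{\m D})$ from Lemma \ref{lem:geo}(b) forces $x \perp \m L_A$ to be \emph{equivalent} to $x \in \ker \uu V^*$, i.e.\ $V_j^* x = 0$ for all $j$. The second step is a direct inner-product computation: for an arbitrary $y = \eta \oplus \Delta_{\Theta} \xi \in \m H_\Theta$ (with $\eta \in \Gamma \otimes \m L$, $\xi \in \Gamma \otimes \m D$) one computes
\[
\langle x, V_j y \rangle = \langle (L_j^* \otimes I)\, \Theta(\delta),\; \eta - M_\Theta \xi \rangle .
\]
Here the multi-analytic intertwining relation $M_\Theta (L_j \otimes I) = (L_j \otimes I) M_\Theta$ is used, together with the fact that $e_\emptyset \otimes \delta$ sits in degree $0$ and is therefore orthogonal to $(L_j \otimes I)\xi$; this collapses the $\Delta_\Theta = (I - M_\Theta^* M_\Theta)^{1/2}$ term to $-\langle (L_j^* \otimes I)\Theta(\delta),\, M_\Theta \xi \rangle$. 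Since the vectors $\eta - M_\Theta \xi$ exhaust all of $\Gamma \otimes \m L$ (take $\xi = 0$), the condition $V_j^* x = 0$ for every $j$ is equivalent to $(L_j^* \otimes I)\Theta(\delta) = 0$ for every $j$, which says precisely that $\Theta(\delta)$ has no component of degree $\ge 1$, i.e.\ $\Theta(\delta) \in e_\emptyset \otimes \m L$. This gives (a).

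For part (b), since $\m L_E = \overline{\rm span}\{\m L_A,\, P_{\m H_{\m D}}(e_\emptyset \otimes \m L)\}$, orthogonality to $\m L_E$ splits into $x \perp \m L_A$ and $x \perp P_{\m H_{\m D}}(e_\emptyset \otimes \m L)$. The first is handled by part (a) and says $\Theta(\delta) \in e_\emptyset \otimes \m L$. For the second, self-adjointness of $P_{\m H_{\m D}}$ together with $x \in \m H_{\m D}$ yields $\langle x, P_{\m H_{\m D}}(e_\emptyset \otimes m)\rangle = \langle x, e_\emptyset \otimes m\rangle = \langle \Theta(\delta), e_\emptyset \otimes m\rangle$ for every $m \in \m L$, so this orthogonality is equivalent to the vanishing of the degree-zero component of $\Theta(\delta)$. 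Combining the two conditions, $\Theta(\delta)$ lies entirely in degree $0$ \emph{and} its degree-zero part is zero, hence $\Theta(\delta) = 0$; the converse implication is immediate.

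The main obstacle I anticipate is the inner-product computation in (a): one must carry out the cancellation in the $\Delta_\Theta$ term cleanly, and what makes it work is the combination of the intertwining property of $M_\Theta$ with the orthogonality of the degree-zero part of $\Gamma$ to the higher degrees. Once this identity is established, both equivalences follow formally, the only further conceptual ingredient being the decomposition of $\ker(\uu V^{\m D})^*$ provided by Lemma \ref{lem:geo}(b).
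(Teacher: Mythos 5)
Your proposal is correct and follows essentially the same route as the paper's own proof: both parts rest on Lemma \ref{lem:geo}(b) to convert $x \perp \m L_A$ into $x \in \ker \uu V^*$, on the same inner-product computation collapsing the $\Delta_\Theta$ term via the intertwining relation and degree-zero orthogonality, and on the same splitting of $\m L_E$ for part (b). The only cosmetic difference is that you move $L_j \otimes I$ to the other side as $L_j^* \otimes I$ before concluding $\Theta(\delta) \in e_\emptyset \otimes \m L$, which changes nothing of substance.
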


\begin{proof}(a)
From Lemma \ref{lem:geo}(b) it follows that for $x \in \ker (\uu V^{\m D})^*$ we have $x \perp \m L_A$ if and only if $x \in \ker \uu V^*$. This is the case if and only if  for all $y = \eta \oplus \Delta_{\Theta} \xi, \, \xi  \in \Gamma \otimes \m D, \eta \in\Gamma \otimes \m L$
and all $j=1,\ldots,d$
  \begin{eqnarray*}
 0&=&\langle V_j^*x, y\rangle = \langle  x,  V_j y \rangle\\
  &=& \langle  \Theta (\delta) \oplus  \Delta_{\Theta} (\delta),  (L_j \otimes I )\eta \oplus \Delta_{\Theta}  (L_j \otimes I )\xi \rangle\\
  &=& \langle  \Theta (\delta),  (L_j \otimes I )\eta \rangle +   \langle  e_\emptyset \otimes \delta,  \Delta_{\Theta}^2  (L_j \otimes I )\xi \rangle\\
    &=& \langle  \Theta (\delta),  (L_j \otimes I )\eta \rangle+   \langle  e_\emptyset \otimes \delta,
 (L_j \otimes I )\xi \rangle -  \langle  e_\emptyset \otimes \delta,
  M_\Theta^*M_ \Theta (L_j \otimes I )\xi \rangle\\
     &=& \langle  \Theta (\delta),  (L_j \otimes I )\eta \rangle
+ 0 -  \langle  M_ \Theta (e_\emptyset \otimes \delta),
M_ \Theta(L_j \otimes I ) \xi\rangle\\
    &=& \langle  \Theta (\delta),  (L_j \otimes I )(\eta -  M_ \Theta \xi)\rangle .
  \end{eqnarray*}
But $ \langle  \Theta (\delta),  (L_j \otimes I )(\eta -  M_ \Theta \xi) = 0$  for all $ \xi  \in
\Gamma \otimes \m D, \eta \in\Gamma \otimes \m L$ and all $j=1,\ldots,d$
if and only if $\Theta (\delta) \in  e_\emptyset \otimes \m L$,
and hence (a) follows.

\noindent
(b) $x \perp \m L_E$ if and only if $x \perp \m L_A$ and $x \perp P_{\m H_{\m D}} (e_\emptyset \otimes \m L).$
Because for $\ell \in \m L$
\begin{eqnarray*}
   \langle x, P_{\m H_{\m D}} (e_\emptyset \otimes \m \ell)\rangle &=&
 \langle P_{\m H_{\m D}}x, e_\emptyset \otimes \m \ell\rangle
 = \langle x, e_\emptyset \otimes \m \ell\rangle\\
 &=&\langle  \Theta (\delta) \oplus  \Delta_{\Theta} (\delta), e_\emptyset \otimes \m \ell \oplus 0 \rangle
= \langle  \Theta (\delta) , e_\emptyset \otimes \m \ell\rangle.
  \end{eqnarray*}
we infer that $x \perp  P_{\m H_{\m D}} (e_\emptyset \otimes \m L)$ if and only if
$\Theta(\delta) \perp e_\emptyset \otimes \m L.$  From this and part (a) we obtain
$x \perp \m L_E \Leftrightarrow \Theta (\delta) = 0$.
\end{proof}

\begin{definition} \label{non-constant}
We say that a symbol $\Theta: \m D \to \Gamma \otimes \m L$ has no constant directions
if $\Theta(\delta) \in e_\emptyset \otimes \m L$ only for $\delta=0$.
\end{definition}

The following result relating properties of certain subspaces in the functional model to properties of the symbol is crucial for our applications to characteristic functions later.

\begin{theorem} \label{fm}
Suppose $M_\Theta: \Gamma \otimes \m D \to \Gamma \otimes \m L$ is a contractive multi-analytic operator where
 $\m D$ and $\m L$ are Hilbert spaces.
Then the following statements
hold:
 \begin{itemize}
  \item [(a)]
     $  \m L_A  = \m L_E \quad\Leftrightarrow\quad [ \Theta(\delta) \in e_\emptyset \otimes \m L \;\Rightarrow\; \Theta(\delta)=0]$ for all $\delta \in \m D$.
 \item [(b)] $  \m L_E = \ker (\uu V^{\m D})^* \quad\Leftrightarrow\quad \Theta$ is injective.
 \item [(c)] $ \m L_A = \ker (\uu V^{\m D})^*
 \quad\Leftrightarrow\quad \Theta$ has no constant directions.
 \end{itemize}
 \end{theorem}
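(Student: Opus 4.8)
The plan is to handle all three equivalences by a single mechanism, exploiting the chain of closed subspaces $\m L_A \subset \m L_E \subset \ker (\uu V^{\m D})^*$ established immediately before the statement, together with the bijective parametrization of $\ker (\uu V^{\m D})^*$ by $\m D$. Recall that $x \in \ker (\uu V^{\m D})^*$ exactly when $x = \Theta(\delta) \oplus \Delta_{\Theta}(\delta)$ for some $\delta \in \m D$, and that since $W_\Theta$ is isometric one has $\|x\| = \|\delta\|$, so $x = 0$ if and only if $\delta = 0$. This last remark is what lets me pass between vanishing of $x$ and injectivity-type statements about $\Theta$. The abstract fact I would invoke repeatedly is that for nested closed subspaces $M \subset N$ of a Hilbert space $\m K$ one has $M = N$ if and only if the relative orthogonal complements agree, $\{x \in \m K : x \perp M\} = \{x \in \m K : x \perp N\}$ (take orthogonal complements inside $\m K$); in the special case $N = \m K$ this reduces to $M = \m K$ if and only if the only $x \in \m K$ with $x \perp M$ is $x = 0$. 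Throughout I set $\m K := \ker (\uu V^{\m D})^*$.

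For (b) and (c) I would apply the special case with $M = \m L_E$, respectively $M = \m L_A$. By Lemma \ref{lem:fm}(b), $x \perp \m L_E$ is equivalent to $\Theta(\delta) = 0$; combined with $x = 0 \Leftrightarrow \delta = 0$, the condition that the only $x \in \m K$ orthogonal to $\m L_E$ is $x=0$ becomes $\Theta(\delta) = 0 \Rightarrow \delta = 0$, i.e. injectivity of $\Theta$, giving (b). Likewise by Lemma \ref{lem:fm}(a), $x \perp \m L_A$ is equivalent to $\Theta(\delta) \in e_\emptyset \otimes \m L$, so the analogous condition reads $\Theta(\delta) \in e_\emptyset \otimes \m L \Rightarrow \delta = 0$, which is exactly the no-constant-directions condition of Definition \ref{non-constant}, giving (c).

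For (a) I would use the general form with $M = \m L_A$ and $N = \m L_E$ inside $\m K$, so that $\m L_A = \m L_E$ is equivalent to equality of the two complement sets $\{x \in \m K : x \perp \m L_A\}$ and $\{x \in \m K : x \perp \m L_E\}$. Under the parametrization by $\delta$, Lemma \ref{lem:fm} turns these into $\{\delta : \Theta(\delta) \in e_\emptyset \otimes \m L\}$ and $\{\delta : \Theta(\delta) = 0\}$. The inclusion $\{\delta : \Theta(\delta) = 0\} \subset \{\delta : \Theta(\delta) \in e_\emptyset \otimes \m L\}$ is automatic since $0 \in e_\emptyset \otimes \m L$, so the two sets coincide precisely when the reverse inclusion holds, namely $\Theta(\delta) \in e_\emptyset \otimes \m L \Rightarrow \Theta(\delta) = 0$ for all $\delta$, which is the asserted condition. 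I do not expect a genuine obstacle here: the substance is carried entirely by Lemma \ref{lem:fm} and the isometry of $W_\Theta$. The only points demanding care are to apply Lemma \ref{lem:fm} solely to vectors lying in $\ker (\uu V^{\m D})^*$, where its hypotheses are met, and, in (a), to notice the automatic inclusion that collapses the biconditional to the single displayed implication.
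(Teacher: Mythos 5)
Your proposal is correct and takes essentially the same route as the paper: both arguments rest entirely on Lemma \ref{lem:fm} together with the unitarity of $W_\Theta|_{e_\emptyset \otimes \m D} : e_\emptyset \otimes \m D \to \ker (\uu V^{\m D})^*$, and the paper's explicit two-directional argument for (b) is exactly your orthocomplement principle ($\m L_E = \ker (\uu V^{\m D})^*$ iff the only vector in $\ker (\uu V^{\m D})^*$ orthogonal to $\m L_E$ is $0$) written out by hand. The only organizational difference is that you prove (c) directly from Lemma \ref{lem:fm}(a), whereas the paper deduces (c) by combining (a) and (b); this is cosmetic, not a different method.
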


\begin{proof}
(a) is immediate from Lemma \ref{lem:fm}.

(b) Assume that $\ker (\uu V^{\m D})^* = \m L_E $. Let  $\delta \in \m D$ such that $\Theta (\delta) = 0$.
Then $x := \Theta (\delta) \oplus  \Delta_{\Theta} (\delta) \in \ker (\uu V^{\m D})^*$ but from Lemma \ref{lem:fm}(b) also $x  \perp \m L_E =\ker (\uu V^{\m D})^*.$
 So $x  = 0$ and this implies $\delta = 0$ because $W_\Theta|_{e_\emptyset \otimes \m D} :e_\emptyset \otimes \m D  \to \ker ({\uu V^{\m D}{\rm)}}^*$ is unitary. Thus $\Theta$ is an injective map.

Conversely, assume that $\Theta$ is injective.
Let $x \in \ker (\uu V^{\m D})^* \ominus \m L_E$. Then there exist $\delta \in \m D$ such that  $x = \Theta (\delta) \oplus  \Delta_{\Theta} (\delta)$ and $x \perp \m L_E$. From Lemma \ref{lem:fm}(b) we infer that $\Theta (\delta)= 0$.
Since $\Theta $ is injective, we obtain $\delta =0$ and $x =  \Theta (0) \oplus  \Delta_{\Theta} (0)= 0$.
Thus $\ker (\uu V^{\m D})^*=  \m L_E$.

Finally (c) follows by combining (a) and (b). Note that having no constant directions clearly implies injectivity.
\end{proof}

The traditional motivation for functional models is the study of the compression
$\uu A$ of $\uu V$ to $\m H_A$, that is $\uu A := (A_1, \ldots, A_d)$ with
$A_j := P_{\m H_A} V_j \,|_{\m H_A}$ for $j=1, \ldots, d$. In the following propositions we state some properties which will be used later.
Note that by construction $\uu V$ is an isometric dilation of $\uu A$
and the following criterion for minimality is our first application of
Theorem \ref{fm}. 

\begin{proposition} \label{min-dil}
$\uu V$ is a minimal isometric dilation of $\uu A$ if and only if $\m L_A = \ker (\uu V^{\m D})^*$ which happens if and only if $\Theta$ has no constant directions.
\end{proposition}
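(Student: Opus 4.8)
The second stated equivalence, namely that $\m L_A = \ker (\uu V^{\m D})^*$ holds precisely when $\Theta$ has no constant directions, is exactly Theorem \ref{fm}(c), so nothing new is needed there; the plan is to concentrate on the first equivalence. Since $\uu V$ is by construction an isometric dilation of $\uu A$ and $\m H_\Theta = \m H_A \oplus \m H_{\m D}$, I would first recall from the theory of the minimal isometric dilation (as summarized in the Introduction) that the smallest closed $\uu V$-invariant subspace containing $\m H_A$ is $\m H_A \oplus \bigoplus_\alpha V_\alpha \m L_A$, an internal orthogonal sum inside $\m H_\Theta$. Consequently $\uu V$ is a minimal isometric dilation of $\uu A$ if and only if this subspace is all of $\m H_\Theta$, that is, if and only if $\bigoplus_\alpha V_\alpha \m L_A = \m H_{\m D}$. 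This reduces the whole statement to comparing the span of the $\uu V$-translates of $\m L_A$ with $\m H_{\m D}$.

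Next I would exploit the structure of $\uu V$ on $\m H_{\m D}$. Because $W_\Theta$ is a unitary intertwining the canonical row shift on $\Gamma \otimes \m D$ with $\uu V^{\m D}$, the row isometry $\uu V^{\m D}$ is a pure row shift with generating wandering subspace $\ker (\uu V^{\m D})^* = W_\Theta (e_\emptyset \otimes \m D)$; hence $\m H_{\m D} = \bigoplus_\alpha V_\alpha \ker (\uu V^{\m D})^*$, the orthogonal Wold decomposition of this shift. Combined with the inclusion $\m L_A \subseteq \ker (\uu V^{\m D})^*$ already established before Lemma \ref{lem:fm}, the implication ``$\m L_A = \ker (\uu V^{\m D})^* \Rightarrow \bigoplus_\alpha V_\alpha \m L_A = \m H_{\m D}$'' is then immediate, which gives minimality.

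For the converse I would argue by orthogonality, and this is where the only real care is needed. Assume $\bigoplus_\alpha V_\alpha \m L_A = \m H_{\m D}$ and take $w \in \ker (\uu V^{\m D})^* \ominus \m L_A$. Since $\ker (\uu V^{\m D})^*$ is the wandering subspace of the pure shift $\uu V^{\m D}$, it is orthogonal to $V_\alpha \m H_{\m D}$ for every word $\alpha$ with $|\alpha| \geq 1$, hence orthogonal to $V_\alpha \m L_A$ for all such $\alpha$ (as $\m L_A \subseteq \m H_{\m D}$); together with $w \perp \m L_A$ this yields $w \perp \bigoplus_\alpha V_\alpha \m L_A = \m H_{\m D}$. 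As $w$ itself lies in $\m H_{\m D}$ we get $w = 0$, so $\m L_A = \ker (\uu V^{\m D})^*$. Chaining this equivalence with Theorem \ref{fm}(c) completes the proof. The main (and only modest) obstacle is the last orthogonality bookkeeping: one must keep straight that $\m L_A$ sits inside the wandering subspace $\ker (\uu V^{\m D})^*$, so that comparing the two Wold-type decompositions of $\m H_{\m D}$ forces equality of the generating subspaces rather than merely equality of the generated spaces.
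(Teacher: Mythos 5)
Your proposal is correct and follows essentially the same route as the paper: both reduce the first equivalence to the observation that the smallest closed $\uu V$-invariant subspace containing $\m H_A$ has wandering subspace $\m L_A$, so that minimality of $\uu V$ amounts to $\m L_A = \ker (\uu V^{\m D})^*$, and both then quote Theorem \ref{fm}(c) for the second equivalence. The only difference is that you spell out the Wold-decomposition bookkeeping (comparing $\bigoplus_\alpha V_\alpha \m L_A$ with $\m H_{\m D} = \bigoplus_\alpha V_\alpha \ker(\uu V^{\m D})^*$) that the paper's two-line proof leaves implicit.
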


\begin{proof}
Restriction of $\uu V$ to the smallest closed $\uu V$-invariant subspace
containing $\m H_A$, with the wandering subspace $\m L_A$, provides a
minimal isometric dilation for $\uu A$.
Hence $\uu V$ itself is a minimal isometric dilation of $\uu A$ if and only if $\m L_A = \ker (\uu V^{\m D})^*$. The second half is a restatement of Theorem \ref{fm}(c).
\end{proof}

Recall that $\uu A$ is called {\em completely non-coisometric} if $z \in \m H_A$ satisfies $\sum_{|\alpha|=n} \| A^*_\alpha z \|^2 = \|z\|^2$ for all $n \in \mathbb{N}$ only if $z=0$.

\begin{proposition} \label{model}
The following statements hold:
\begin{itemize}
\item [(a)]  $\m H_A \cap (\Gamma \otimes \m L)^\perp = \{0\}$.
\item [(b)] $\uu A$ is completely non-coisometric.
\end{itemize}
\end{proposition}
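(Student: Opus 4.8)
\emph{Part (a).} The plan is to unwind the two orthogonality conditions defining a vector of $\m H_A \cap (\Gamma \otimes \m L)^\perp$. First I would note that inside $\m H_\Theta = (\Gamma \otimes \m L) \oplus \overline{\Delta_\Theta(\Gamma\otimes\m D)}$ the annihilator $(\Gamma\otimes\m L)^\perp$ is precisely $0 \oplus \overline{\Delta_\Theta(\Gamma\otimes\m D)}$, so any $h \in \m H_A \cap (\Gamma\otimes\m L)^\perp$ has the form $h = 0 \oplus w$ with $w \in \overline{\Delta_\Theta(\Gamma\otimes\m D)}$. Since $h \in \m H_A = \m H_\Theta \ominus \m H_{\m D}$, it is orthogonal to every $M_\Theta\xi \oplus \Delta_\Theta\xi$, which gives $\langle w, \Delta_\Theta\xi\rangle = 0$ for all $\xi \in \Gamma\otimes\m D$. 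Thus $w$ lies in $\overline{\Delta_\Theta(\Gamma\otimes\m D)}$ yet is orthogonal to the dense subspace $\Delta_\Theta(\Gamma\otimes\m D)$, forcing $w = 0$ and hence $h = 0$.

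\emph{Part (b).} The strategy is to reduce the coisometry condition to part (a) by exploiting the purity of the creation operators on the first summand. First I would record that, since $\m H_{\m D}$ is $\uu V$-invariant, its complement $\m H_A$ is co-invariant, so $A_j^* = V_j^*|_{\m H_A}$ and hence $A_\alpha^* z = V_\alpha^* z$ for every word $\alpha$ and every $z \in \m H_A$. Consequently the hypothesis $\sum_{|\alpha|=n}\|A_\alpha^* z\|^2 = \|z\|^2$ for all $n$ becomes $\sum_{|\alpha|=n}\|V_\alpha^* z\|^2 = \|z\|^2$ for all $n$. Next I would observe that $\uu V$ is block diagonal for the decomposition $\m H_\Theta = (\Gamma\otimes\m L)\oplus\overline{\Delta_\Theta(\Gamma\otimes\m D)}$: on the first summand it acts as the canonical row shift $\uu L \otimes I$, and on the second as the row isometry $\tilde{\uu L}$ given by $\tilde L_j\,\Delta_\Theta\xi := \Delta_\Theta(L_j\otimes I)\xi$, which is well defined and has orthogonal ranges because $M_\Theta$ intertwines $L_j\otimes I$.

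Writing $z = \eta \oplus w$ with $\eta \in \Gamma\otimes\m L$ and $w \in \overline{\Delta_\Theta(\Gamma\otimes\m D)}$, block diagonality then gives
\[
\sum_{|\alpha|=n}\|V_\alpha^* z\|^2 = \sum_{|\alpha|=n}\|(L_\alpha\otimes I)^*\eta\|^2 + \sum_{|\alpha|=n}\|\tilde L_\alpha^* w\|^2 .
\]
The crux is the asymptotics of the two terms. For $\eta = \sum_\beta e_\beta \otimes \ell_\beta$ the first sum equals $\sum_{|\beta|\ge n}\|\ell_\beta\|^2$, which tends to $0$ as $n \to \infty$ because $\uu L \otimes I$ is a pure row isometry; the second sum is bounded by $\|w\|^2$ since $\tilde{\uu L}$ is a row isometry. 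As the left side is the constant $\|z\|^2$ for every $n$, letting $n \to \infty$ yields $\|z\|^2 \le \|w\|^2$, and since $\|z\|^2 = \|\eta\|^2 + \|w\|^2$ this forces $\eta = 0$. Then $z = 0 \oplus w \in \m H_A \cap (\Gamma\otimes\m L)^\perp = \{0\}$ by part (a), so $z = 0$ and $\uu A$ is completely non-coisometric.

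I expect the main obstacle to lie not in the final asymptotic estimate but in the two structural bookkeeping points that enable the reduction: confirming that $\m H_A$ is genuinely co-invariant, so that the adjoints $A_\alpha^*$ of the compression are literal restrictions of $V_\alpha^*$, and verifying that the second block $\tilde{\uu L}$ is a bona fide row isometry so that its contribution stays bounded by $\|w\|^2$. Once these are secured, purity of the Fock-space shift and part (a) close the argument cleanly.
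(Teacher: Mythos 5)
Your proof is correct and follows essentially the same route as the paper: part (a) unwinds the two orthogonality conditions exactly as the paper does (your treatment of the closure $\overline{\Delta_\Theta(\Gamma\otimes\m D)}$ via density is in fact slightly more careful), and part (b) is the paper's argument — co-invariance of $\m H_A$ giving $A_\alpha^* = V_\alpha^*|_{\m H_A}$, then purity of the row shift on $\Gamma\otimes\m L$ forcing $z\perp\Gamma\otimes\m L$, then apply (a) — with the asymptotic estimate that the paper leaves implicit written out in full.
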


\begin{proof}
If $z \in \m H_\Theta$ is orthogonal to $\Gamma \otimes \m L$ then it can be written
in the form $0 \oplus \Delta_\Theta \eta$ with $\eta \in \Gamma \otimes \m D$. If further
$z \in \m H_A$ then it is also orthogonal to $M_\Theta \xi \oplus \Delta_\Theta \xi$ for all $\xi \in \Gamma \otimes \m D$. With $\xi = \eta$ we find $\Delta_\Theta \eta = 0$ and hence $z=0$. This proves (a).

If $z \in \m H_A$ satisfies $\sum_{|\alpha|=n} \| A^*_\alpha z \|^2 = \|z\|^2$ for all $n \in \mathbb{N}$ then $z$ must be orthogonal to
$\Gamma \otimes \m L$ because $A^*_\alpha = V^*_\alpha  \,|_{\m H_A}$ and $\uu V$ acts on $\Gamma \otimes \m L$ as a row shift.
Hence (b) follows from (a).
\end{proof}

Note that (b) has already been observed in \cite{Po89b}, in the proof of Theorem 5.1 there. We discuss the characteristic function of $\uu A$ in Section 4 and then compare our concept of having no constant directions with the traditional concept of a purely contractive function.

\section{From Contractive Multi-Analytic Operators to Contractive Liftings and Back}

In this section we state and prove the main result of this paper, Theorem \ref{char}, which gives a complete classification of minimal contractive liftings by multi-analytic operators (or their symbols).  
The correspondence is constructive and can be used to analyze the structure of the set of minimal contractive liftings. To be able to state this result in an optimal way as a one-to-one correspondence we need to introduce suitable equivalence classes.
Recall that liftings and the concepts of contractivity and minimality have already been defined in the Introduction. We also use the notation which is given there. 

\begin{definition} \label{def:equivalent}
Let $\uu E$ and $\uu E'$ be liftings of a row contraction $\uu C$. If there exists a unitary $u: \m H_E \rightarrow \m H_{E'}$ which intertwines $\uu E$ and $\uu E'$ and restricts to the identity on the subspace $\m H_C$ corresponding to $\uu C$ then we say that the liftings $\uu E$ and $\uu E'$ are unitarily equivalent.

If $M$ and $M'$ are multi-analytic operators with symbols $\Theta: \m D \rightarrow \Gamma \otimes \m L$ and $\Theta': \m D' \rightarrow \Gamma \otimes \m L$ (with the same $\m L$) and
there exists a unitary $v: \m D \rightarrow \m D'$ such that
$\Theta' \circ v = \Theta$ then we say that $M$ and $M'$ (also $\Theta$ and $\Theta'$) are equivalent.
\end{definition}

Note that if $\Theta_1: \m D \rightarrow \Gamma \otimes \m L_1$ and $\Theta_2: \m D' \rightarrow \Gamma \otimes \m L_2$ are multi-analytic operators then even if $\dim \m L_1 = \dim \m L_2$ it only makes sense to say that they are equivalent after fixing an identification between the spaces $\m L_1$ and $\m L_2$, for example by specifying unitaries $u_1: \m L_1 \rightarrow \m L$ and $u_2: \m L_2 \rightarrow \m L$ to identify both $\m L_1$ and $\m L_2$ with a space $\m L$. In this sense 
the concept of equivalence is slightly different from the concept of coincidence used in \cite{NF70,Po89a,Po89b}. This is further discussed in Section 4. 
We are now able to state the main result.

\begin{theorem} \label{char}
Let  $\uu C$ be a row contraction on a Hilbert space $\m H_C$.
Then there is a one-to-one correspondence between unitary equivalence classes of minimal contractive liftings $\uu E$
of $\uu C$ and equivalence classes of injective symbols $\Theta: \m D \rightarrow \Gamma \otimes \m D_C$.
\\
The dimension of $\m D$ is equal to the defect of $\uu E$.
The correspondence is given explicitly by two mappings $\m E$ from
contractive multi-analytic operators to contractive liftings and $\m M$ in the opposite direction which are constructed below.
\end{theorem}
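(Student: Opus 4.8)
The plan is to define the two maps $\m E$ and $\m M$ by hand and then show that, once we pass to the equivalence classes of Definition \ref{def:equivalent}, they restrict to mutually inverse bijections between injective symbols and minimal contractive liftings. Everything rests on the functional model of Section 2, applied to $M_\Theta \colon \Gamma \otimes \m D \to \Gamma \otimes \m D_C$ with target space the defect space $\m D_C$, together with the Sch\"{a}ffer realization of the minimal isometric dilation recalled in the Introduction.

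For $\m E$, starting from a contractive multi-analytic $M_\Theta$ I would form the functional model, giving the row isometry $\uu V$ on $\m H_\Theta = (\Gamma \otimes \m D_C) \oplus \overline{\Delta_\Theta(\Gamma \otimes \m D)}$, the invariant subspace $\m H_{\m D}$, and the corner $\uu A$ on $\m H_A = \m H_\Theta \ominus \m H_{\m D}$. I then enlarge to $\hat{\m H}_E := \m H_C \oplus \m H_\Theta$ and define a row isometry $\uu V^E$ on it which agrees with the minimal isometric dilation $\uu V^C$ of $\uu C$ on $\m H_C \oplus (\Gamma \otimes \m D_C)$ and with $\uu V$ on $\m H_\Theta$; the two prescriptions are compatible because both act as the canonical shift on the common $\Gamma \otimes \m D_C$. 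Setting $\m H_E := \m H_C \oplus \m H_A$ and $\uu E := P_{\m H_E}\,\uu V^E|_{\m H_E}$, the subspace $\m H_{\m D} = \hat{\m H}_E \ominus \m H_E$ is $\uu V^E$-invariant, so $\m H_E$ is coinvariant and $\uu E$ has the lower triangular lifting form with corners $\uu C$ and $\uu A$; being a compression of a row isometry it is contractive. Thus $\m E(M_\Theta) := \uu E$ is a contractive lifting, and I would moreover show it is \emph{automatically minimal}: the coinvariance gives $E_\alpha|_{\m H_C} = P_{\m H_E} V^E_\alpha|_{\m H_C}$, so $\bigvee_\alpha E_\alpha \m H_C = \overline{P_{\m H_E}\,\hat{\m H}_C} = \m H_C \oplus \overline{P_{\m H_A}(\Gamma \otimes \m D_C)}$, which equals $\m H_E$ exactly because $\m H_A \cap (\Gamma \otimes \m D_C)^\perp = \{0\}$ by Proposition \ref{model}(a).

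For $\m M$, given a minimal contractive lifting $\uu E$ I would pass to its minimal isometric dilation $\uu V^E$ on $\m H_E \oplus (\Gamma \otimes \m D_E)$. Since $\m H_C \subset \m H_E$ and $\uu E$ dilates $\uu C$, the closed $\uu V^E$-invariant subspace generated by $\m H_C$ carries the minimal isometric dilation of $\uu C$ and so, by uniqueness, is a copy of $\m H_C \oplus (\Gamma \otimes \m D_C)$ with $\uu V^C$ on it. Comparing the generating wandering subspace of $\uu V^E$, canonically identified with $\m D_E$ and of dimension equal to the defect of $\uu E$, with $e_\emptyset \otimes \m D_C$, and propagating multi-analytically by intertwining the canonical shifts, I would read off a symbol $\Theta \colon \m D \to \Gamma \otimes \m D_C$ with $\m D$ identified with $\m D_E$; contractivity of $M_\Theta$ comes from the isometry of the dilation. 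This defines $\m M(\uu E) := M_\Theta$, and by construction the dilation space of $\uu E$ becomes identified with the model space $\hat{\m H}_E = \m H_C \oplus \m H_\Theta$, so that $\m M$ reverses the construction of $\m E$.

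The core of the argument is then to show $\m M \circ \m E = \mathrm{id}$ on injective symbols and $\m E \circ \m M = \mathrm{id}$ on minimal liftings, modulo the two equivalences, for which the functional model unitary $W_\Theta$ (identifying $\ker(\uu V^{\m D})^*$ with $e_\emptyset \otimes \m D$) and uniqueness of the minimal isometric dilation are the decisive tools. Here Theorem \ref{fm}(b) enters: since the Section 2 space $\m L_E$ is precisely the wandering subspace of $\uu V^E$ attached to $\m H_E$, its dimension is the defect of $\uu E$, always at most $\dim \m D = \dim \ker(\uu V^{\m D})^*$, and Theorem \ref{fm}(b) says the equality $\m L_E = \ker(\uu V^{\m D})^*$ holds exactly when $\Theta$ is injective; this yields both the dimension statement $\dim \m D = $ defect of $\uu E$ and the fact that injectivity is the condition under which $\m E$ loses no information. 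I would then check compatibility with equivalence: a unitary $v \colon \m D \to \m D'$ with $\Theta' \circ v = \Theta$ induces a unitary of model spaces fixing $\m H_C$ and intertwining the liftings, and conversely. I expect the main obstacle to lie exactly in closing the loop $\m M \circ \m E = \mathrm{id}$: one must verify that $\m M$ applied to a minimal lifting returns an injective symbol and that the identifications of $\m D_E$, $\m L_E$, $\ker(\uu V^{\m D})^*$ and the generating wandering subspaces made in the two constructions are mutually consistent, so that the defect bookkeeping closes up and the recovered symbol is equivalent to the original.
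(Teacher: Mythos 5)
Your outline reproduces the paper's own strategy almost step for step: the same functional-model construction of $\m E$ (with minimality of $\m E(M_\Theta)$ deduced from $\m H_A \cap (\Gamma \otimes \m D_C)^\perp = \{0\}$, i.e.\ Proposition \ref{model}(a) combined with the minimality criterion of Proposition \ref{prop:minimal}), the same dilation-based construction of $\m M$ from the pair of wandering subspaces $\m L_E$, $\m L_C$, the same use of Theorem \ref{fm}(b) to show that injectivity of $\Theta$ is exactly the condition under which the model dilation is already the minimal isometric dilation of $\m E(M_\Theta)$ (Proposition \ref{prop:injective}), so that $\m M \circ \m E(M_\Theta) = M_\Theta$ and $\dim \m D$ equals the defect of $\uu E$ (Proposition \ref{ME1}), and the same reduction of well-definedness to compatibility with the two equivalence relations (Proposition \ref{prop:equivalent}).

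There is, however, one genuine gap, and it is precisely the point you flag but do not resolve: you must \emph{prove} that $\m M$ applied to a minimal contractive lifting has an injective symbol. Without this, $\tilde{\m M}$ need not land in classes of injective symbols, and the loop $\m E \circ \m M(\uu E) \simeq \uu E$ cannot be closed, because your reconstruction argument via Theorem \ref{fm}(b) is only available for injective symbols. This implication is not formal and cannot be obtained by any symmetry consideration: its converse is false (take $\uu B = 0$ and $\uu A$ isometric with $\m H_A \neq \{0\}$; then $\m M(\uu E)$ is the identity, hence has injective symbol, yet $\uu E = \uu C \oplus \uu A$ is not minimal), so it genuinely requires a separate argument. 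The paper supplies it in Proposition \ref{minimal to injective2}: if the symbol of $\m M(\uu E)$ is not injective, there is $0 \neq x \in \m L_E$ orthogonal to $\m H_C \oplus (\bigoplus_\alpha V^E_\alpha \m L_C)$; not all $(V^E_j)^* x$ can vanish, since otherwise $x$ would be orthogonal to $\uu V^E (\bigoplus_1^d \m H_E)$ and to $\m H_E$, hence to $\m L_E$, forcing $x=0$; choosing $j$ with $y := (V^E_j)^* x \neq 0$ and using that $\m H_C \oplus (\bigoplus_\alpha V^E_\alpha \m L_C)$ is $\uu V^E$-invariant and $\m H_E$ is coinvariant, one gets $0 \neq y \in \m H_A \cap (\bigoplus_\alpha V^E_\alpha \m L_C)^\perp$, contradicting minimality by Proposition \ref{prop:minimal}. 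Once this step is in place, your plan closes exactly as in the paper (Corollary \ref{cor:minimal} together with Proposition \ref{prop:equivalent}).
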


By this one-to-one correspondence it is justified, in the case of minimal contractive liftings $\uu E$, to call the contractive multi-analytic operator $\m M(\uu E)$ the {\em characteristic function} of the lifting $\uu E$. This terminology has been introduced in \cite{DG11}. With the theory developed here we have completely solved the open problem formulated at the end of Section 3 in \cite{DG11}, namely to classify the symbols which can occur as characteristic functions of minimal contractive liftings: exactly the injective symbols can and do occur. We give a precise statement as follows.

\begin{corollary} \label{char2}
The characteristic function of a minimal contractive lifting has always an injective symbol. Conversely, if $M$ is any contractive multi-analytic operator with an injective symbol then it is the characteristic function  of the minimal contractive lifting $\m E(M)$.
\end{corollary}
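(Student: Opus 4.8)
The plan is to read off both assertions directly from the one-to-one correspondence established in Theorem \ref{char}, recalling that the characteristic function of a minimal contractive lifting $\uu E$ is by definition the contractive multi-analytic operator $\m M(\uu E)$. So essentially no new argument is needed beyond unpacking the theorem in the two directions of the correspondence.

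For the first assertion, I would let $\uu E$ be a minimal contractive lifting of $\uu C$. By Theorem \ref{char} the map $\m M$ sends each unitary equivalence class of minimal contractive liftings to an equivalence class of \emph{injective} symbols; in particular the symbol of $\m M(\uu E)$ is injective. Since $\m M(\uu E)$ is precisely the characteristic function of $\uu E$, this is exactly the statement that the characteristic function of a minimal contractive lifting always has an injective symbol.

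For the converse I would start from a contractive multi-analytic operator $M$ with injective symbol $\Theta: \m D \to \Gamma \otimes \m D_C$. Applying $\m E$ produces a contractive lifting $\m E(M)$ of $\uu C$, and because the correspondence of Theorem \ref{char} matches injective symbols exactly with minimal contractive liftings, the injectivity of $\Theta$ already guarantees that $\m E(M)$ is minimal. It then remains only to identify $M$ as its characteristic function, that is, to verify that $\m M(\m E(M))$ equals $M$ up to equivalence. This is immediate from Theorem \ref{char}, where, suitably restricted, $\m E$ and $\m M$ are mutually inverse bijections between the two families of equivalence classes; hence $\m M(\m E(M))$ lies in the equivalence class of $M$, which is precisely the meaning of $M$ being the characteristic function of $\m E(M)$.

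The only point that requires care --- and it is bookkeeping rather than a genuine obstacle --- is that the statement tacitly fixes the target space $\m L$ of the symbol to be the defect space $\m D_C$ of the given row contraction $\uu C$, so that $M$ is an admissible candidate for a characteristic function of a lifting of $\uu C$ in the first place. With this convention understood, no work beyond Theorem \ref{char} is needed: Corollary \ref{char2} is simply the extraction of its content in the form that resolves the open classification problem, namely that \emph{injectivity of the symbol} is both necessary and sufficient for a contractive multi-analytic operator to arise as such a characteristic function.
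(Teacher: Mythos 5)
Your proposal is correct, and in substance it matches the paper: the paper's entire proof of Corollary \ref{char2} is the remark that it ``is nothing but a restatement of Proposition \ref{minimal to injective2} and Proposition \ref{ME1}'', i.e., of exactly the ingredients from which Theorem \ref{char} was assembled. The only real difference is the direction of citation. You quote the finished class-level bijection of Theorem \ref{char}, while the paper points back at the representative-level facts: Proposition \ref{minimal to injective2} for ``minimal $\Rightarrow$ injective symbol'', and Proposition \ref{E minimal} together with Proposition \ref{ME1} for the converse. Your route is logically sound (Theorem \ref{char} is proved without appeal to the corollary, so there is no circularity), but it buys slightly less: the class-level statement only yields that $\m M(\m E(M))$ is \emph{equivalent} to $M$, whereas Proposition \ref{ME1} gives the equality $\m M \circ \m E(M) = M$ on the nose, under the canonical identification of $\m D$ with $\m D_E$ discussed after that proposition --- and that equality is the literal content of ``$M$ \emph{is} the characteristic function of $\m E(M)$''. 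Your closing paragraph glosses this by declaring equivalence to be ``precisely the meaning'' of being the characteristic function; that is defensible given the paper's conventions on equivalence classes, but the sharper on-the-nose statement is available for free from Proposition \ref{ME1}, and citing it (plus Proposition \ref{E minimal} for the minimality of $\m E(M)$) is what the paper actually does.
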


Let us also note here the following immediate consequence of Theorem \ref{char} which shows a surprising correspondence between liftings of different row contractions.

\begin{corollary} \label{C change}
Let $\uu C=(C_1,\ldots, C_d)$ and $\uu C'=(C'_1,\ldots, C'_d)$
be row contractions with the same defect $\ell$. Then there is a one-to-one correspondence between their sets of unitary equivalence classes of minimal contractive liftings.
\end{corollary}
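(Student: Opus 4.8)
The plan is to deduce this purely formally from Theorem \ref{char}, applied once to $\uu C$ and once to $\uu C'$. That theorem identifies the unitary equivalence classes of minimal contractive liftings of $\uu C$ with the equivalence classes of injective symbols $\Theta: \m D \to \Gamma \otimes \m D_C$, and likewise the unitary equivalence classes of minimal contractive liftings of $\uu C'$ with the equivalence classes of injective symbols $\Theta': \m D' \to \Gamma \otimes \m D_{C'}$. Since the two target spaces differ only through the defect spaces $\m D_C$ and $\m D_{C'}$ (the Fock space $\Gamma$ over $\mathbb C^d$ being the same for both), and these defect spaces have the same Hilbert space dimension $\ell$ by hypothesis, the two classifying sets should be brought into bijection by transporting symbols along a fixed identification of $\m D_C$ with $\m D_{C'}$.

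Concretely, first I would choose a unitary $u: \m D_C \to \m D_{C'}$, which exists because $\dim \m D_C = \ell = \dim \m D_{C'}$. It induces a unitary $I_\Gamma \otimes u: \Gamma \otimes \m D_C \to \Gamma \otimes \m D_{C'}$ which intertwines the creation operators $L_j \otimes I$ on the two spaces. I would then transport symbols by
\[
\Theta \longmapsto \Theta' := (I_\Gamma \otimes u)\circ \Theta, \qquad \Theta: \m D \to \Gamma \otimes \m D_C .
\]
The next step is to check that this map has all the required properties: it sends injective symbols to injective symbols (since $I_\Gamma \otimes u$ is unitary, hence injective), and on the level of the associated operators it sends $M_\Theta$ to $(I_\Gamma \otimes u)M_\Theta$, which is again contractive (composition with a unitary preserves the norm) and again multi-analytic (because $I_\Gamma \otimes u$ intertwines $L_j \otimes I$). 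Its inverse is the transport along $u^{-1}$.

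Finally I would verify that the transport respects the equivalence relation of Definition \ref{def:equivalent} and hence descends to a bijection on equivalence classes: if $\Theta_1$ and $\Theta_2$ are equivalent via a unitary $v$ with $\Theta_2 \circ v = \Theta_1$, then $((I_\Gamma \otimes u)\Theta_2)\circ v = (I_\Gamma \otimes u)\Theta_1$, so the transported symbols are equivalent via the same $v$; conversely one argues with $u^{-1}$. Composing this bijection on symbol classes with the two instances of Theorem \ref{char} produces the asserted one-to-one correspondence between the unitary equivalence classes of minimal contractive liftings of $\uu C$ and of $\uu C'$.

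I do not expect a genuine obstacle here, since the entire analytic content already sits in Theorem \ref{char}. The only point that needs care is conceptual rather than technical: the comparison of symbols with different target Fock spaces $\Gamma \otimes \m D_C$ and $\Gamma \otimes \m D_{C'}$ is, by Definition \ref{def:equivalent} and the remark following it, only meaningful after fixing an identification of the coefficient spaces $\m D_C$ and $\m D_{C'}$, which is exactly what the auxiliary unitary $u$ supplies. The resulting correspondence between liftings depends on the choice of $u$, but any choice works.
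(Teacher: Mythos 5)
Your proposal is correct and follows essentially the same route as the paper: both classifying sets are identified with equivalence classes of injective symbols via Theorem \ref{char}, and the correspondence is completed by fixing a unitary identification of $\m D_C$ with $\m D_{C'}$, exactly as the paper's proof (and its remark after Definition \ref{def:equivalent}) indicates. You merely spell out in detail the transport of symbols and the compatibility checks that the paper declares obvious.
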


\begin{proof}
This is obvious because both sets are in one-to-one correspondence with the classes of injective symbols described in Theorem \ref{char}. Note that to fix a specific correspondence you have to fix an identification between $\m D_C$ and $\m D_{C'}$, compare the remarks after Definition \ref{def:equivalent}.
\end{proof}

We now construct the mappings $\m E$ and $\m M$ and determine their properties in a sequence of propositions and lemmas, more and more closing in on a proof of Theorem \ref{char}. 
Let
$\uu C=(C_1,\ldots, C_d)$ be any row contraction on a Hilbert space $\m H_C$, fixed once and for all.
We first define the mapping $\m E$ from contractive multi-analytic operators $M_\Theta : \Gamma \otimes \m D \to \Gamma \otimes \m D_C$ to contractive liftings $\uu E$ of $\uu C$.
We introduce the Hilbert space
\begin{align*}
\hat{\m H} &:= \m H_C \oplus \m H_\Theta \\
&= \m H_C \oplus (\Gamma \otimes \m D_C) \oplus \overline{\Delta_{\Theta}(\Gamma \otimes \m D)} \\
&= \m H_C \oplus \m H_A \oplus \m H_D,
\end{align*}
where the notation comes from the functional model construction in Section 2, with $\m L = \m D_C$.
Note that on the subspace $\m H_\Theta$ we have the row isometry $\uu V$ introduced in Section 2. It restricts to the canonical row shift on $\Gamma \otimes \m D_C$ which is a reducing subspace for $\uu V$.
Now we also have the minimal isometric dilation $\uu V^C$ on the subspace $\m H_C \oplus (\Gamma \otimes \m D_C)$, in the canonical version stated in the final paragraph of the Introduction, which also restricts to the canonical row shift on $\Gamma \otimes \m D_C$. Hence there exists a row isometry on $\hat{\m H}$, which we call $\hat{\uu V}$, which restricts to $\uu V$ on $\m H_\Theta$ and restricts to
$\uu V^C$ on $\m H_C \oplus (\Gamma \otimes \m D_C)$. If we examine the decomposition 
$\hat{\m H} =
\m H_E \oplus \m H_D$ with $\m H_E := \m H_C \oplus \m H_A$
then we obtain on $\m H_E$ a contractive lifting of $\uu C$ as follows:
 \[
 \uu E = \begin{pmatrix}
           \uu C   &  \uu 0 \\
 	     \uu B   &  \uu A
          \end{pmatrix}
          \quad \text{defined by} \quad
 E_j := P_{\m H_E} \hat{V}_j |_{\m H_E}.\;j=1,\ldots,d.
 \]

We now define the mapping $\m E$ by setting $\m E(M_\Theta)$
equal to the contractive lifting $\uu E$ of $\uu C$ defined in this way.
We also write $\m E_C(M_\Theta)$ or $E_{C,\Theta}$ if we want to include the dependence on the original row contraction $\uu C$ in the notation.

Note further that $\hat{\uu V}$ is an isometric dilation of $\uu E$
and we also have $E_\alpha = P_{\m H_E} \hat{V}_\alpha |_{\m H_E}$ for all words $\alpha$.
The notation $\m L_E$ introduced in Section 2 is consistent with the notation for isometric dilations of $\uu E$ in the sense that
\[
\m L_E = \overline{\rm span}\{\m L_A, P_{\m H_{\m D}} (e_\emptyset \otimes \m D_C)\}
= \overline{\rm{span}} \{\m H_E, \hat{\uu V} (\displaystyle \bigoplus_{1}^d \m H_E)\} \ominus\m H_E,
\]
hence we find a version of the minimal isometric dilation $\uu V^E$ of
$\uu E$ by restricting $\hat{\uu V}$ to $\hat{\m H}_E :=
\m H_E \oplus \bigoplus_\alpha \hat{V}_\alpha \m L_E$. As usual
we identify this space (by a canonical unitary) with
$\m H_E \oplus (\Gamma \otimes \m D_E)$.
Now from Theorem \ref{fm}(b) we immediately conclude that under a rather weak assumption, injectivity of the symbol $\Theta$, our construction of $\hat{\uu V}$ already provides us with a minimal isometric dilation of the lifting $\uu E$:

\begin{proposition} \label{prop:injective}
For the lifting $\uu E = \m E(M_\Theta)$ of $\uu C$
\[
\hat{\uu V} = \uu V^E  \quad \Leftrightarrow \quad \m L_E = \ker (\uu V^{\m D})^* \quad \Leftrightarrow \quad \Theta \text{\;injective}
\]
\end{proposition}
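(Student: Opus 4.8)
I would prove the two equivalences separately. The equivalence on the right, $\m L_E = \ker (\uu V^{\m D})^* \Leftrightarrow \Theta$ injective, is exactly Theorem \ref{fm}(b) applied with $\m L = \m D_C$, so nothing remains to be shown there. All the work is in the equivalence on the left, $\hat{\uu V} = \uu V^E \Leftrightarrow \m L_E = \ker (\uu V^{\m D})^*$, and the plan is to reduce it to the uniqueness of the generating wandering subspace of a pure row shift.

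\textbf{Reduction.} First I would unwind the definitions. By construction $\hat{\uu V}$ is an isometric dilation of $\uu E$, and $\uu V^E$ is obtained from it by restricting to $\hat{\m H}_E = \m H_E \oplus \bigoplus_\alpha \hat{V}_\alpha \m L_E$, where $\m L_E$ is the wandering subspace $\overline{\rm span}\{\m H_E, \hat{\uu V}(\bigoplus_1^d \m H_E)\} \ominus \m H_E$. Hence $\hat{\uu V} = \uu V^E$ holds precisely when $\hat{\uu V}$ is already minimal, that is, when $\hat{\m H} = \hat{\m H}_E$. Now $\hat{\m H} = \m H_E \oplus \m H_D$ with $\m H_D = \m H_{\m D}$, and since $\m L_E \subset \ker (\uu V^{\m D})^* \subset \m H_{\m D}$ while $\m H_{\m D}$ is $\hat{\uu V}$-invariant, the dilation part $\bigoplus_\alpha \hat{V}_\alpha \m L_E$ is contained in $\m H_{\m D}$ and orthogonal to $\m H_E$. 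Therefore $\hat{\m H} = \hat{\m H}_E$ is equivalent to $\bigoplus_\alpha \hat{V}_\alpha \m L_E = \m H_{\m D}$.

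\textbf{Key step.} Here I would invoke the Fock/Wold structure of $\uu V^{\m D}$. Because $W_\Theta$ is a unitary intertwining the canonical row shift on $\Gamma \otimes \m D$ with $\uu V^{\m D}$, the row isometry $\uu V^{\m D}$ is a pure shift whose unique generating wandering subspace is $\ker (\uu V^{\m D})^* = W_\Theta(e_\emptyset \otimes \m D)$; consequently $\m H_{\m D} = \bigoplus_\alpha \hat{V}_\alpha \ker (\uu V^{\m D})^*$. On the other hand, since $\m L_E \subset \ker (\uu V^{\m D})^*$, the subspace $\m L_E$ is itself wandering for $\uu V^{\m D}$ and is recovered from the space it generates by the formula $\bigoplus_\alpha \hat{V}_\alpha \m L_E \ominus \sum_j \hat{V}_j (\bigoplus_\alpha \hat{V}_\alpha \m L_E) = \m L_E$ (the empty-word summand). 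Applying this recovery to both $\bigoplus_\alpha \hat{V}_\alpha \m L_E$ and $\m H_{\m D}$ shows that the equality $\bigoplus_\alpha \hat{V}_\alpha \m L_E = \m H_{\m D}$ forces $\m L_E = \ker (\uu V^{\m D})^*$, while the reverse implication is immediate. Combined with the reduction above, this yields the left-hand equivalence, and chaining with Theorem \ref{fm}(b) completes the proof.

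\textbf{Main obstacle.} The delicate point, and the step I would check most carefully, is the bookkeeping that $\bigoplus_\alpha \hat{V}_\alpha \m L_E \subset \m H_{\m D}$ and that this space is orthogonal to $\m H_E$, since this is what legitimizes reducing $\hat{\m H} = \hat{\m H}_E$ to a comparison of two shift-generated subspaces inside the single row isometry $\uu V^{\m D}$. This rests on $\m H_{\m D}$ being $\hat{\uu V}$-invariant and on the inclusion $\m L_E \subset \ker (\uu V^{\m D})^*$ already established in Section 2; once these are in place, the remainder is the routine uniqueness of the generating wandering subspace of a pure row shift.
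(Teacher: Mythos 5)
Your proof is correct and takes essentially the same route as the paper, which offers no separate argument but declares the proposition an immediate consequence of Theorem \ref{fm}(b) together with the preceding construction (the identification of $\m L_E$ as the wandering subspace of $\hat{\uu V}$ over $\m H_E$, and of $\uu V^E$ as the restriction of $\hat{\uu V}$ to $\m H_E \oplus \bigoplus_\alpha \hat{V}_\alpha \m L_E$). Your key step --- using the Wold-type recovery of the generating wandering subspace of the pure row shift $\uu V^{\m D}$ to pass from $\bigoplus_\alpha \hat{V}_\alpha \m L_E = \m H_{\m D}$ to $\m L_E = \ker (\uu V^{\m D})^*$ --- is precisely the bookkeeping the paper leaves implicit.
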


This means that for an injective symbol $\Theta$ the minimal isometric dilation of the lifting $\uu E = E_{C,\Theta}$ can be built in a straightforward way from the minimal isometric dilation of $\uu C$ together with the functional model for $\Theta$.
\\

Now we construct the mapping $\m M$ in the opposite direction. We still have the row contraction $\uu C=(C_1,\ldots, C_d)$ on a Hilbert space $\m H_C$, fixed once and for all. Suppose that we are further given a contractive lifting
 \[
 \uu E
= \begin{pmatrix}
           \uu C   &  \uu 0 \\
 	     \uu B   &  \uu A
          \end{pmatrix}
\]
on a Hilbert space $\m H_E := \m H_C \oplus \m H_A$. We can now construct the minimal isometric dilation $\uu V^E$ on the Hilbert space $\m H_E \oplus (\Gamma \otimes \m D_E)$. Instead of
$e_{\emptyset} \otimes \m D_E$ we can also write $\m L_E$.
Clearly $\uu V^E$
is an isometric dilation of $\uu C$ on $\m H_C$ and hence we can find
a version of the minimal isometric dilation $\uu V^C$ by restricting
$\uu V^E$ to $\m H_C \oplus \bigoplus_\alpha V^E_\alpha \m L_C$, where
$\m L_C = \overline{\rm{span}} \{\m H_C, \uu V^E (\displaystyle \bigoplus_{1}^d \m H_C)\} \ominus\m H_C$ is a subspace with dimension equal to the defect of $\uu C$ and hence can be identified canonically with $\m D_C$. Both $\m L_E$ and $\m L_C$
are wandering subspaces of $\uu V^E$ and we can think of the orthogonal projection
onto $\bigoplus_\alpha V^E_\alpha \m L_C$
restricted to
$\bigoplus_\alpha V^E_\alpha \m L_E$
as a contractive multi-analytic operator
$M_\Theta: \Gamma \otimes \m D_E \rightarrow \Gamma \otimes \m D_C$.
That this is indeed a multi-analytic operator can be directly verified from
$\m L_C \subset \m H_E \oplus \m L_E$,
compare \cite{Go12}, Theorem 1.2,
for a systematic study of the construction of multi-analytic operators from pairs of wandering subspaces.

We set $\m M(\uu E) := M_\Theta$ which defines a map $\m M$
from contractive liftings $\uu E$  of a given row contraction $\uu C$ to
contractive multi-analytic operators $M_\Theta: \Gamma \otimes \m D_E \to \Gamma \otimes \m D_C$. We also write $\m M_C(\uu E)$ or $M_{C,E}$
if we want to include the dependence on the original row contraction $\uu C$ in the notation.

Recall from Section 2 that within the functional model we can in fact always think of $M_\Theta$ as such a restriction of an orthogonal projection. By inspection we observe that
if and only if in the construction of $\m E$ we end up with
$\hat{\m H} = \hat{\m H}_E$ the application of $\m M$ after $\m E$ just reconstructs the original $M_\Theta$. Thus
from Proposition \ref{prop:injective} we conclude:

\begin{proposition} \label{ME1}
\[
\m M \circ \m E (M_\Theta) = M_\Theta \quad \Leftrightarrow \quad \Theta \text{\;injective}
\]
\end{proposition}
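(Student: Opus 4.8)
The plan is to reduce the equivalence directly to Proposition~\ref{prop:injective} by turning the informal remark that precedes the statement into a precise geometric criterion. Concretely, I want to establish the chain
\[
\m M \circ \m E (M_\Theta) = M_\Theta \;\Leftrightarrow\; \hat{\m H} = \hat{\m H}_E \;\Leftrightarrow\; \hat{\uu V} = \uu V^E \;\Leftrightarrow\; \Theta \text{ injective},
\]
where the last two equivalences are Proposition~\ref{prop:injective} together with the fact that, by construction, $\uu V^E = \hat{\uu V}|_{\hat{\m H}_E}$, so that $\hat{\uu V} = \uu V^E$ holds precisely when $\hat{\m H}_E = \hat{\m H}$. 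Thus all the work is in the first equivalence, i.e.\ in identifying the operator produced by $\m M$ applied to $\uu E = \m E(M_\Theta)$ with the Section~2 description of $M_\Theta$.

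First I would record the identifications that hold irrespective of injectivity. Running $\m M$ on $\uu E$ forces us to work inside $\uu V^E = \hat{\uu V}|_{\hat{\m H}_E}$. Since $\m L_C \subset \m H_E \oplus \m L_E \subset \hat{\m H}_E$ and $\hat{\uu V}$ restricts on $\m H_C \oplus (\Gamma \otimes \m D_C)$ to $\uu V^C$, which in turn acts as $\uu L \otimes I$ on $\Gamma \otimes \m D_C$, the wandering subspace $\m L_C$ agrees with $e_\emptyset \otimes \m D_C$ and hence $\bigoplus_\alpha V^E_\alpha \m L_C = \Gamma \otimes \m D_C$, in both constructions and independently of $\Theta$. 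Likewise $\bigoplus_\alpha V^E_\alpha \m L_E = \hat{\m H}_E \ominus \m H_E$. Consequently $\m M(\uu E)$ is always the compression of the orthogonal projection onto $\Gamma \otimes \m D_C$ to the subspace $\hat{\m H}_E \ominus \m H_E$, which is to be compared with the Section~2 realization of $M_\Theta$ as the orthogonal projection onto $(\Gamma \otimes \m D_C) \oplus 0$ restricted to $\m H_{\m D}$ via the unitary $W_\Theta$.

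The decisive comparison is then between the domains $\hat{\m H}_E \ominus \m H_E$ and $\m H_{\m D} = \hat{\m H} \ominus \m H_E$, and here I would split into two cases. If $\Theta$ is injective, Proposition~\ref{prop:injective} yields $\hat{\m H}_E = \hat{\m H}$, so the two domains coincide, $\m L_E = \ker (\uu V^{\m D})^*$ is identified by $W_\Theta$ with $e_\emptyset \otimes \m D$ (so that $\m D_E$ is canonically $\m D$), and $\m M(\uu E)$ is literally the restricted projection that Section~2 identifies with $M_\Theta$; hence $\m M \circ \m E(M_\Theta) = M_\Theta$. Conversely, if $\Theta$ is not injective, Proposition~\ref{prop:injective} gives $\hat{\m H}_E \subsetneq \hat{\m H}$, so $\hat{\m H}_E \ominus \m H_E$ is a proper subspace of $\m H_{\m D}$ and $\m M(\uu E)$ is only a proper restriction of $M_\Theta$, so the two cannot agree under the canonical identification.

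I expect the main obstacle to be bookkeeping rather than anything conceptual: one must check carefully that the two a~priori different descriptions of $M_\Theta$ as ``the projection of one wandering shift onto another'' genuinely coincide, in particular that the identification $\m L_C \cong \m D_C$ built into the definition of $\m M$ matches the canonical shift $\uu L \otimes I$ on $\Gamma \otimes \m D_C$ sitting inside the functional model. The only subtle point in the converse direction is to argue that a proper inclusion of domains forces genuine inequality; this is transparent here because $M_\Theta$ is defined on all of $\Gamma \otimes \m D \cong \m H_{\m D}$ whereas $\m M(\uu E)$ is defined on the strictly smaller $\Gamma \otimes \m D_E \cong \hat{\m H}_E \ominus \m H_E$, reflected concretely by the fact that any $\delta \neq 0$ with $\Theta(\delta) = 0$ produces, via Lemma~\ref{lem:fm}(b), a nonzero vector $0 \oplus \Delta_{\Theta}(\delta) \in \ker (\uu V^{\m D})^* \ominus \m L_E$.
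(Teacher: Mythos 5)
Your proposal is correct and follows the paper's own route exactly: the paper also reduces the statement to the observation that $\m M \circ \m E(M_\Theta) = M_\Theta$ precisely when $\hat{\m H} = \hat{\m H}_E$ (equivalently $\hat{\uu V} = \uu V^E$) and then invokes Proposition~\ref{prop:injective}. Your write-up merely makes explicit the identifications ($\m L_C = e_\emptyset \otimes \m D_C$, $\m L_E \cong \m D_E$, both operators as restricted projections onto $\Gamma \otimes \m D_C$) that the paper compresses into the phrase ``by inspection,'' which is a welcome but not essentially different elaboration.
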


To make the equality on the left explicit recall that in the functional model space $\m H_\Theta$ for $M_\Theta: \Gamma \otimes \m D \rightarrow \Gamma \otimes \m D_C$ with injective symbol $\Theta$
we have
\[
\m L_E = W_\Theta (e_\emptyset \otimes \m D),
\]
see Section 2 and Theorem \ref{fm}(b).
Now $\m L_E$ in the dilation space of $\uu E = \m E (M_\Theta)$, which is provided by the functional model space $\m H_\Theta$ together with $\m H_C$, is related by the canonical unitary to $\m D_E$. With this canonical identification of $\m D$ and $\m D_E$ we have the equality in Proposition \ref{ME1}. 
\\

It remains to determine and to examine the class of contractive liftings which correspond to multi-analytic operators in this way. It is here where the minimality of liftings comes into the game. We restate the definition of minimality in a more explicit way.

\begin{definition} \label{def:minimal}
A lifting $\uu E$ of $\uu C$ is called minimal if
\[
\overline{\rm{span}} \{ E_\alpha x \colon x \in \m H_C , \text{all words}\; \alpha\} = \m H_E \,.
\]
Here $\alpha  = \alpha_1 \ldots \alpha_m$ with $\alpha_k \in \{1,\ldots,d\}$ if $m\in {\mathbb N}$ and $\alpha = \emptyset$ if $m=0$.
\end{definition}

Equivalently, the lifting $\uu E$ is minimal if and only if $\m H_E$ is the smallest $\uu E$-invariant subspace containing $\m H_C$. Hence Definition \ref{def:minimal} is consistent with the definition of minimality given in the Introduction.

\begin{proposition} \label{prop:minimal}
For a contractive lifting
 \[
 \uu E
= \begin{pmatrix}
           \uu C   &  \uu 0 \\
 	     \uu B   &  \uu A
          \end{pmatrix}
\]
of a row contraction $\uu C$ the following are equivalent:
\begin{itemize}
\item [(a)] $\uu E$ is minimal.
\item [(b)]
$\m H_A \cap (\bigoplus_\alpha V^E_\alpha \m L_C)^\perp = \{0\}$.
\end{itemize}
\end{proposition}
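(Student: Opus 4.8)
The plan is to translate the minimality condition for the lifting into a statement about orthogonal complements inside the minimal isometric dilation space of $\uu E$, and then read off condition (b) by a direct decomposition. Throughout, write $\m K_C := \m H_C \oplus \bigoplus_\alpha V^E_\alpha \m L_C$ for the minimal dilation space of $\uu C$ sitting inside the dilation space $\m H_E \oplus (\Gamma \otimes \m D_E)$ of $\uu V^E$. As explained in the construction of $\m M$ above (and in the Introduction), restricting $\uu V^E$ to $\m K_C$ gives a version of $\uu V^C$, so $\m K_C$ is precisely the smallest $\uu V^E$-invariant subspace containing $\m H_C$; that is,
\[
\m K_C = \overline{\rm span}\{V^E_\alpha x : x \in \m H_C,\ \text{all words}\ \alpha\}.
\]

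The first and key step is to rewrite the minimality span from Definition \ref{def:minimal} in terms of $\m K_C$. Since $\m H_C \subset \m H_E$ and $E_\alpha = P_{\m H_E} V^E_\alpha |_{\m H_E}$, we have $E_\alpha x = P_{\m H_E} V^E_\alpha x$ for every $x \in \m H_C$. Because $P_{\m H_E}$ is continuous and linear, it commutes with the formation of closed linear spans up to closure, giving
\[
\overline{\rm span}\{E_\alpha x : x \in \m H_C,\ \text{all}\ \alpha\} = \overline{P_{\m H_E}\, \m K_C}.
\]
Hence $\uu E$ is minimal if and only if $P_{\m H_E}\, \m K_C$ is dense in $\m H_E$.

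Next I would convert density into an orthogonality statement. For $z \in \m H_E$, using $P_{\m H_E} z = z$, one has $\langle z, P_{\m H_E} k\rangle = \langle P_{\m H_E} z, k\rangle = \langle z, k\rangle$ for all $k \in \m K_C$, so $z \perp P_{\m H_E}\m K_C$ within $\m H_E$ if and only if $z \perp \m K_C$. Thus $P_{\m H_E}\m K_C$ is dense in $\m H_E$ exactly when $\m H_E \cap \m K_C^\perp = \{0\}$ (all complements taken in the dilation space). Finally, decomposing $\m K_C = \m H_C \oplus \bigoplus_\alpha V^E_\alpha \m L_C$ and $\m H_E = \m H_C \oplus \m H_A$ and using $\m H_E \cap \m H_C^\perp = \m H_A$,
\[
\m H_E \cap \m K_C^\perp = \m H_E \cap \m H_C^\perp \cap \Big(\bigoplus_\alpha V^E_\alpha \m L_C\Big)^\perp = \m H_A \cap \Big(\bigoplus_\alpha V^E_\alpha \m L_C\Big)^\perp,
\]
which yields the equivalence of (a) and (b).

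The main obstacle I expect is the justification of $\overline{\rm span}\{E_\alpha x\} = \overline{P_{\m H_E}\,\m K_C}$, i.e. that projecting and taking closed spans interact cleanly; this needs the continuity of $P_{\m H_E}$ together with density of the algebraic span in $\m K_C$, and a careful check that $\m K_C$ really is the smallest $\uu V^E$-invariant subspace containing $\m H_C$ (so that the $V^E_\alpha$-orbit of $\m H_C$ spans it). Once these two points are secured, the passage from density to the complement condition and the final orthogonal decomposition are entirely routine.
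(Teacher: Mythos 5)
Your proof is correct and follows essentially the same route as the paper's: both rest on the dilation property $E_\alpha = P_{\m H_E} V^E_\alpha|_{\m H_E}$, the identification of the closed span of $\{V^E_\alpha x : x \in \m H_C\}$ with $\m H_C \oplus \bigoplus_\alpha V^E_\alpha \m L_C$, and the self-adjointness of $P_{\m H_E}$ to convert density of the projected span into the orthogonality condition (b). The paper merely packages these steps as a single chain of equivalences for a fixed $y \in \m H_E$, whereas you phrase them via density of $P_{\m H_E}\m K_C$; the content is identical.
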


\begin{proof}
Let $y \in \m H_E$. With $\perp$ denoting the orthogonal complement
in $\m H_E$ we have
\begin{eqnarray*}
& & y \in \big[ \overline{\rm{span}} \{ E_\alpha x \colon x \in \m H_C, \;\text{all words}\;\alpha\} \big]^\perp \\
&\Leftrightarrow&
y \perp E_\alpha x \;(= P_{\m H_E} V^E_\alpha x)
\;\text{for all}\; x \in \m H_C, \;\text{all words}\;\alpha   \\
&\Leftrightarrow&
y \perp P_{\m H_E} \big[ \m H_C \oplus (\bigoplus_\alpha V^E_\alpha \m L_C) \big]
= \m H_C + P_{\m H_E} \bigoplus_\alpha V^E_\alpha \m L_C \\
&\Leftrightarrow&
y \in \m H_A \cap (\bigoplus_\alpha V^E_\alpha \m L_C)^\perp
\end{eqnarray*}
We conclude that
$\big[ \overline{\rm{span}} \{ E_\alpha x \colon x \in \m H_C, \;\text{all words}\;\alpha\} \big]^\perp = \{0\}$ if and only if
$\m H_A \cap (\bigoplus_\alpha V^E_\alpha \m L_C)^\perp = \{0\}$, which implies the proposition.
\end{proof}

Remark: Comparing (b) with Lemma 3.5(iii) of \cite{DG11} shows that what we call a minimal lifting in this paper is the same as what was called a reduced lifting in \cite{DG11}. We prefer the terminology `minimal' because Definition \ref{def:minimal} above is simpler and because this is consistent with the terminology `minimal isometric dilation' which is the most important example of a minimal contractive lifting. A number of additional results about minimal or reduced liftings can be found in \cite{DG11},
we only include the following
basic property which foreshadows the connection between minimal liftings and functional models
established later.

\begin{proposition} \label{A cnc}
The right lower corner $\uu A$ of a minimal contractive lifting $\uu E$ of $\uu C$ is completely non-coisometric.
\end{proposition}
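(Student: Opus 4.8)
The plan is to argue entirely inside the minimal isometric dilation $\uu V^E$ of $\uu E$ that was just built for the map $\m M$, and to convert the coisometric hypothesis into orthogonality to the wandering subspace $\bigoplus_\alpha V^E_\alpha \m L_C$, so that minimality via Proposition \ref{prop:minimal}(b) finishes the job. The starting observation is purely structural: since $\m H_A$ is $\uu E$-invariant we have $A_\alpha = E_\alpha|_{\m H_A}$, while $\m H_E$ is co-invariant for $\uu V^E$, so $E_\alpha^* = (V^E_\alpha)^*|_{\m H_E}$. Combining these gives, for every word $\alpha$ and every $z \in \m H_A$, the identity $A_\alpha^* z = P_{\m H_A} (V^E_\alpha)^* z$.

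Now suppose $z \in \m H_A$ satisfies $\sum_{|\alpha|=n} \|A_\alpha^* z\|^2 = \|z\|^2$ for all $n$. First I would run the sandwich estimate
\[
\|z\|^2 = \sum_{|\alpha|=n} \|A_\alpha^* z\|^2 \le \sum_{|\alpha|=n} \|(V^E_\alpha)^* z\|^2 \le \|z\|^2,
\]
where the first inequality is $\|P_{\m H_A}(V^E_\alpha)^* z\| \le \|(V^E_\alpha)^* z\|$ term by term, and the second is the row-isometry inequality $\sum_{|\alpha|=n} V^E_\alpha (V^E_\alpha)^* \le I$. Equality throughout forces $\|P_{\m H_A}(V^E_\alpha)^* z\| = \|(V^E_\alpha)^* z\|$, that is $(V^E_\alpha)^* z \in \m H_A$, for every word $\alpha$.

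The core step is then to deduce $z \perp V^E_\beta \m L_C$ for every word $\beta$. Here I would use that the generating vectors of $\m L_C$ for the dilation $\uu V^E$ of $\uu C$ are (limits of) $\sum_i (V^E_i - C_i)\xi_i$ with $\xi_i \in \m H_C$, exactly the canonical wandering vectors recalled in the Introduction. Pairing $z$ with $V^E_\beta \sum_i (V^E_i - C_i)\xi_i$ and moving the isometries across produces precisely the terms $\langle (V^E_{\beta i})^* z, \xi_i\rangle$ and $\langle (V^E_\beta)^* z, C_i \xi_i\rangle$; both vanish because $(V^E_{\beta i})^* z$ and $(V^E_\beta)^* z$ lie in $\m H_A$ by the previous paragraph, while $\xi_i$ and $C_i \xi_i$ lie in $\m H_C = \m H_E \ominus \m H_A$. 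Passing to the closed span, $z \perp \bigoplus_\alpha V^E_\alpha \m L_C$.

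Finally $z \in \m H_A \cap (\bigoplus_\alpha V^E_\alpha \m L_C)^\perp$, which is $\{0\}$ by minimality of $\uu E$ through Proposition \ref{prop:minimal}(b); hence $z=0$ and $\uu A$ is completely non-coisometric. I expect the only delicate point to be the bookkeeping in the core step, namely the clean identification $A_\alpha^* z = P_{\m H_A}(V^E_\alpha)^* z$ and the careful tracking of which vectors sit in $\m H_A$ versus $\m H_C$; the sandwich estimate and the appeal to minimality are then routine.
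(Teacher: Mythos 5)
Your proof is correct, and its analytic core coincides with the paper's: since $\m H_E$ is co-invariant for $\uu V^E$, the operators $(V^E_\alpha)^*|_{\m H_E}$ you use are exactly $E^*_\alpha$, so your sandwich estimate is in substance the paper's estimate $\sum_{|\alpha|=n}\|A^*_\alpha z\|^2 \le \sum_{|\alpha|=n}\|E^*_\alpha z\|^2 \le \|z\|^2$ (the paper invokes row-contractivity of $\uu E$ where you invoke the row-isometry inequality for $\uu V^E$), and both arguments force $E^*_\alpha z \in \m H_A$ for every word $\alpha$. Where you genuinely diverge is in how minimality is brought in. The paper simply takes adjoints in Definition \ref{def:minimal}: minimality of $\uu E$ is equivalent to the nonexistence of a nonzero $x \in \m H_A$ with $E^*_\alpha x \in \m H_A$ for all $\alpha$, so the conclusion is immediate and no dilation is needed at all. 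You instead pass to the dilation space, convert ``$(V^E_\alpha)^* z \in \m H_A$ for all $\alpha$'' into orthogonality to $\bigoplus_\alpha V^E_\alpha \m L_C$ via the wandering-vector computation with $\sum_i (V^E_i - C_i)\xi_i$, and then quote Proposition \ref{prop:minimal}(b). That computation is correct (the bookkeeping you flagged does go through), but it in effect re-derives one direction of the equivalence between Definition \ref{def:minimal} and condition (b) that Proposition \ref{prop:minimal} already encodes, so your route is longer. What it buys is an explicit picture of where a coisometric vector would have to sit in the dilation geometry, consistent with the paper's own remark that one could alternatively argue through the functional model (Corollary \ref{cor:minimal} and Proposition \ref{model}(b)); what the paper's route buys is brevity and complete independence from the dilation machinery.
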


\begin{proof}
Taking adjoints in the definition of minimal lifting we see that $\uu E$ is minimal if and only if there is no $0 \not=x \in \m H_A = \m H_E \ominus \m H_C$ such that $E^*_\alpha x \in \m H_A$ for all $\alpha$. In other words, if $\uu E$ is minimal then for
all $0 \not=x \in \m H_A$ we find $\alpha$ such that
\[
\|A^*_\alpha x\| = \|P_{\m H_A} E^*_\alpha x\|
< \|E^*_\alpha x\|
\]
and because $\uu E$ is contractive it follows
that $\uu A$ is completely non-coisometric.

Alternatively we could use Corollary \ref{cor:minimal} below to establish that
$\uu A$ arises from a functional model and then quote Proposition \ref{model}(b) to get the result.
\end{proof}

\noindent
We now proceed to obtain minimal liftings from
multi-analytic operators.

\begin{proposition} \label{E minimal}
If $M: \Gamma \otimes \m D \rightarrow \Gamma \otimes \m D_C$ is any contractive multi-analytic operator then the lifting $\m E (M)$ of $\uu C$ is always minimal.
\end{proposition}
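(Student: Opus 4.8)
The plan is to reduce the asserted minimality of $\uu E = \m E(M)$ directly to Proposition \ref{model}(a), which in the present situation (where $\m L = \m D_C$) says that $\m H_A \cap (\Gamma \otimes \m D_C)^\perp = \{0\}$. Note that injectivity of $\Theta$ will play no role, so the argument covers all contractive multi-analytic operators $M$.

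First I would determine the smallest $\hat{\uu V}$-invariant subspace of $\hat{\m H}$ containing $\m H_C$. Since $\hat{\uu V}$ restricts to the \emph{minimal} isometric dilation $\uu V^C$ on the $\hat{\uu V}$-invariant subspace $\m H_C \oplus (\Gamma \otimes \m D_C)$, an easy induction gives $\hat{V}_\alpha x = V^C_\alpha x$ for every word $\alpha$ and every $x \in \m H_C$, and minimality of $\uu V^C$ yields
\[
\overline{\rm span}\{\hat{V}_\alpha x : x \in \m H_C,\ \text{all words}\ \alpha\} = \m H_C \oplus (\Gamma \otimes \m D_C).
\]
Using $E_\alpha = P_{\m H_E}\hat{V}_\alpha|_{\m H_E}$ together with $\m H_C \subset \m H_E$, and pushing the bounded projection $P_{\m H_E}$ through the closed linear span, I obtain
\[
\m K := \overline{\rm span}\{E_\alpha x : x \in \m H_C,\ \alpha\} = \overline{P_{\m H_E}\big(\m H_C \oplus (\Gamma \otimes \m D_C)\big)}.
\]
By Definition \ref{def:minimal} it then remains to prove $\m K = \m H_E$.

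Finally I would compute the orthogonal complement $\m H_E \ominus \m K$. A vector $y \in \m H_E$ is orthogonal to $\m K$ exactly when $\langle y, P_{\m H_E} w\rangle = \langle y, w\rangle = 0$ for all $w \in \m H_C \oplus (\Gamma \otimes \m D_C)$, i.e.\ when $y \perp \m H_C$ and $y \perp \Gamma \otimes \m D_C$. Since $y \in \m H_E = \m H_C \oplus \m H_A$, orthogonality to $\m H_C$ forces $y \in \m H_A$, so that $\m H_E \ominus \m K = \m H_A \cap (\Gamma \otimes \m D_C)^\perp$; here the complement may equally be taken in $\m H_\Theta$ or in $\hat{\m H}$, since $\m H_A \subset \m H_\Theta$ and $\m H_C \perp \m H_\Theta$. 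Proposition \ref{model}(a) now gives $\m H_A \cap (\Gamma \otimes \m D_C)^\perp = \{0\}$, hence $\m K = \m H_E$ and $\uu E$ is minimal. The only real subtlety is the bookkeeping in the middle step --- justifying the interchange of $P_{\m H_E}$ with the closed span (valid because $P_{\m H_E}$ is bounded) and checking that the relevant orthogonal complements coincide --- after which the reduction to Proposition \ref{model}(a) is immediate.
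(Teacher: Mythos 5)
Your proof is correct and follows essentially the same route as the paper: both reduce minimality of $\m E(M)$ to the statement $\m H_A \cap (\Gamma \otimes \m D_C)^\perp = \{0\}$, which is exactly Proposition \ref{model}(a). The only cosmetic difference is that the paper invokes Proposition \ref{prop:minimal} (whose proof contains precisely your orthogonality computation, carried out for $\uu V^E$) together with the identification of $\Gamma \otimes \m D_C$ with $\bigoplus_\alpha V^E_\alpha \m L_C$, whereas you inline that computation directly for $\hat{\uu V}$ and Definition \ref{def:minimal}, thereby sidestepping that identification.
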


\begin{proof}
In the construction of $\uu E = \m E (M)$ using a functional model of $M$ it is the subspace $\Gamma \otimes \m D_C$ in the model which becomes the subspace $\bigoplus_\alpha V^E_\alpha \m L_C$ in the dilation space of $\uu E$. Hence by Proposition \ref{prop:minimal} minimality of $\uu E$ is satisfied if and only if in the model we have
$\m H_A \cap (\Gamma \otimes \m D_C)^\perp = \{0\}$.
But by Proposition \ref{model}(a) this is always the case.
\end{proof}

\begin{corollary} \label{minimal to injective1}
If $\Theta$ is an injective symbol then there exists a minimal lifting $\uu E$ of $\uu C$ such that $M_\Theta = \m M (\uu E)$.
\end{corollary}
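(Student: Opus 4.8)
The plan is to obtain the desired minimal lifting simply by applying the construction $\m E$ to the contractive multi-analytic operator $M_\Theta$ determined by the given injective symbol $\Theta$, and then to read off the asserted identity from the two preceding propositions. Concretely, I would set $\uu E := \m E(M_\Theta)$ and verify the two requirements of the statement separately: that $\uu E$ is a minimal lifting of $\uu C$, and that $\m M(\uu E) = M_\Theta$.

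For the first requirement I would invoke Proposition \ref{E minimal}, which guarantees that $\m E(M)$ is minimal for \emph{any} contractive multi-analytic operator $M$. In particular $\uu E = \m E(M_\Theta)$ is a minimal lifting of $\uu C$, and note that this uses nothing about $\Theta$ beyond its being the symbol of a contractive multi-analytic operator; injectivity is not needed at this stage.

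For the second requirement I would bring in the injectivity of $\Theta$ through Proposition \ref{ME1}, whose equivalence asserts precisely that $\m M \circ \m E (M_\Theta) = M_\Theta$ if and only if $\Theta$ is injective. Since $\Theta$ is injective by hypothesis, this gives $\m M(\uu E) = \m M \circ \m E(M_\Theta) = M_\Theta$, which is exactly the claimed equality $M_\Theta = \m M(\uu E)$. Combining the two steps produces the minimal lifting $\uu E$ with the required property.

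I do not expect any real obstacle here, since all of the substantive content has already been supplied by Propositions \ref{E minimal} and \ref{ME1}; the corollary is essentially a repackaging of these two results. The only point deserving care is the canonical identification of $\m D$ with $\m D_E$ (via the unitary $W_\Theta|_{e_\emptyset \otimes \m D}$ carrying $e_\emptyset \otimes \m D$ onto $\m L_E$), which is what makes the equality $\m M \circ \m E(M_\Theta) = M_\Theta$ literal rather than merely up to equivalence, as discussed in the paragraph following Proposition \ref{ME1}.
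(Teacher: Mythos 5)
Your proposal is correct and is essentially identical to the paper's own proof: both set $\uu E := \m E(M_\Theta)$, obtain minimality from Proposition \ref{E minimal}, and obtain $\m M(\uu E) = M_\Theta$ from Proposition \ref{ME1} using injectivity of $\Theta$. Your remark about the canonical identification of $\m D$ with $\m D_E$ is a fair additional observation consistent with the discussion following Proposition \ref{ME1}.
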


\begin{proof}
Put $\uu E := \m E (M_\Theta)$. Then $\uu E$ is minimal by Proposition \ref{E minimal} and $\m M (\uu E) = \m M \circ \m E (M_\Theta) = M_\Theta$ by Proposition \ref{ME1}.
\end{proof}

\begin{proposition} \label{minimal to injective2}
If a contractive lifting $\uu E$ of $\uu C$ is minimal then the symbol of $\m M (\uu E)$ is injective.
\end{proposition}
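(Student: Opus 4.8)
The plan is to compute the kernel of the symbol $\Theta$ of $\m M(\uu E)$ inside the wandering subspace $\m L_E$ of the minimal isometric dilation $\uu V^E$, and to show that minimality of $\uu E$ forces this kernel to be trivial. Recall that, by the construction of $\m M$, the operator $M_\Theta$ is the orthogonal projection onto $\bigoplus_\alpha V^E_\alpha \m L_C$ restricted to $\bigoplus_\alpha V^E_\alpha \m L_E = \Gamma \otimes \m D_E$; hence, under the canonical identification $e_\emptyset \otimes \m D_E = \m L_E$, the symbol acts on $\delta \in \m L_E$ by $\Theta(\delta) = P_{\bigoplus_\alpha V^E_\alpha \m L_C}\, \delta$. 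First I would record that any $\delta \in \m L_E$ is automatically orthogonal to $\m H_C$, so that $\Theta(\delta)=0$ is equivalent to $\delta \perp \hat{\m H}_C$, where $\hat{\m H}_C := \m H_C \oplus \bigoplus_\alpha V^E_\alpha \m L_C = \overline{\rm span}\{V^E_\alpha x : x \in \m H_C,\ \text{all}\ \alpha\}$ is the $\uu V^E$-invariant subspace carrying $\uu V^C$. Thus $\ker \Theta = \{\delta \in \m L_E : \delta \perp \hat{\m H}_C\}$, and the goal becomes to show this space is $\{0\}$.

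Next, since every $g \in \hat{\m H}_C$ decomposes as $P_{\m H_E} g \oplus P_{\Gamma \otimes \m D_E} g$ in $\hat{\m H}_E = \m H_E \oplus (\Gamma \otimes \m D_E)$, and $\delta \in \m L_E \subset \Gamma \otimes \m D_E$ is orthogonal to $\m H_E$, the orthogonality $\delta \perp \hat{\m H}_C$ is equivalent to $\delta \perp P_{\m L_E} \hat{\m H}_C$. So it suffices to prove that minimality implies $\overline{P_{\m L_E} \hat{\m H}_C} = \m L_E$: then any $\delta \in \m L_E$ with $\delta \perp P_{\m L_E} \hat{\m H}_C$ satisfies $\delta \perp \m L_E$, whence $\delta = 0$.

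The heart of the argument, and the step I expect to be the main obstacle, is the explicit computation of $P_{\m L_E} \hat{\m H}_C$. Writing a nonempty word as $\alpha = i\beta$ with $i = \alpha_1$ and using $E_\beta = P_{\m H_E} V^E_\beta|_{\m H_E}$, I would decompose $V^E_\beta x = E_\beta x + w$ with $E_\beta x \in \m H_E$ and $w \in \Gamma \otimes \m D_E$. Since $V^E_i$ acts on $\Gamma \otimes \m D_E$ as the shift $L_i \otimes I$, which raises every Fock level by one, $V^E_i w$ has no level-zero part and so $P_{\m L_E} V^E_i w = 0$; on the other hand $P_{\m L_E} V^E_i (E_\beta x) = (V^E_i - E_i) E_\beta x$. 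This yields the key identity $P_{\m L_E} V^E_\alpha x = (V^E_i - E_i) E_\beta x$ for $|\alpha| \geq 1$, together with $P_{\m L_E} x = 0$ for $x \in \m H_C$. Consequently $P_{\m L_E} \hat{\m H}_C = \overline{\rm span}\{(V^E_i - E_i) E_\beta x : x \in \m H_C,\ \beta,\ i\}$.

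Finally I would invoke minimality in the form of Definition \ref{def:minimal} (equivalently Proposition \ref{prop:minimal}), namely $\overline{\rm span}\{E_\beta x : x \in \m H_C,\ \beta\} = \m H_E$. Since each $V^E_i - E_i$ is bounded, the closed span of $\{(V^E_i - E_i) E_\beta x\}$ equals the closed span of $\{(V^E_i - E_i) z : z \in \m H_E,\ i\}$, which is exactly $\m L_E$ by the canonical description of the wandering subspace recalled in the Introduction, where the unitary $D_E(\bigoplus_i \xi_i) \mapsto \sum_i (V^E_i - E_i)\xi_i$ maps onto $\m L_E$. Hence $\overline{P_{\m L_E} \hat{\m H}_C} = \m L_E$, and the reduction of the second paragraph gives $\ker \Theta = \{0\}$, i.e.\ $\Theta$ is injective.
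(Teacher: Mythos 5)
Your proof is correct, but it takes a genuinely different route from the paper's. The paper argues by contraposition with a short adjoint trick: if the symbol is not injective, there is $0 \neq x \in \m L_E$ orthogonal to $\m H_C \oplus (\bigoplus_\alpha V^E_\alpha \m L_C)$; some $j$ gives $y := (V^E_j)^* x \neq 0$ (otherwise $x \perp \m L_E$, forcing $x=0$), and since $\m H_E$ is co-invariant and the space $\m H_C \oplus (\bigoplus_\alpha V^E_\alpha \m L_C)$ is $\uu V^E$-invariant, this $y$ lies in $\m H_A \cap (\bigoplus_\alpha V^E_\alpha \m L_C)^\perp$, contradicting minimality via Proposition \ref{prop:minimal}. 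You instead work directly: after the same identification of $\ker \Theta$ with $\m L_E \cap \hat{\m H}_C^\perp$, where $\hat{\m H}_C := \m H_C \oplus (\bigoplus_\alpha V^E_\alpha \m L_C)$, you prove the equivalent density statement $\overline{{\rm span}}\, P_{\m L_E}\hat{\m H}_C = \m L_E$ by the explicit computation $P_{\m L_E} V^E_{i\beta}\, x = (V^E_i - E_i)E_\beta x$, and then combine Definition \ref{def:minimal} with the canonical description of the wandering subspace $\m L_E$ as $\overline{{\rm span}}\{(V^E_i - E_i)z : z \in \m H_E,\ i=1,\ldots,d\}$. All the individual steps check out: the Schur-function-style splitting $V^E_\beta x = E_\beta x + w$ with $w \in \Gamma \otimes \m D_E$, the fact that $(L_i \otimes I)w$ has no component at Fock level zero, and the passage from a dense set to all of $\m H_E$ under the bounded maps $V^E_i - E_i$. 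What each approach buys: the paper's argument is shorter and reuses the already-proved geometric reformulation of minimality (Proposition \ref{prop:minimal}), avoiding any computation inside the Sch\"{a}ffer-type construction; yours is self-contained at the level of Definition \ref{def:minimal}, and the identity $P_{\m L_E} V^E_\alpha x = (V^E_i - E_i) E_\beta x$ is a useful by-product, as it makes completely explicit how the dilation space of $\uu C$ projects onto $\m L_E$ and hence why minimality of the lifting is exactly what forces the symbol to have trivial kernel. Both proofs rest on the same foundation, namely viewing the symbol as an orthogonal projection restricted to a wandering subspace.
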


\begin{proof}
Suppose the symbol of $\m M (\uu E)$ is not injective. By the definition of $\m M (\uu E)$ this means that there exists $0 \not= x \in \m L_E$ which is orthogonal to $\m H_C \oplus (\bigoplus_\alpha V^E_\alpha \m L_C)$. Further by definition $\m L_E =
\overline{\rm{span}} \{\m H_E, \uu V^E (\displaystyle \bigoplus_{1}^d \m H_E)\} \ominus\m H_E$. Hence there exists $j$ such that $y := (V^E_j)^* x \not= 0$. [In fact, assume that on the contrary $(V^E_j)^* x = 0$ for all $j=1,\ldots,d$. Then $x$ is orthogonal to $V^E_j \m H_E$ for all $j=1,\dots,d$. But $x \in \m L_E$, hence $x$ is also orthogonal to $\m H_E$. It follows that $x$ is orthogonal to $\m L_E$ and hence that $x=0$,
contradicting our assumption above.]
Note that $y \in \m H_E$ and
\[
\langle y,z \rangle = \langle x, V^E_j z \rangle = 0
\]
for all $z \in \m H_C \oplus (\bigoplus_\alpha V^E_\alpha \m L_C)$, by the assumption about $x$ and because the $V^E_j$ leave this subspace invariant.
Hence
$0 \not= y \in \m H_A \cap (\bigoplus_\alpha V^E_\alpha \m L_C)^\perp$ and
$\uu E$ is not minimal by Proposition \ref{prop:minimal}.
\end{proof}

But injectivity of the symbol of $\m M (\uu E)$ does not imply minimality of $\uu E$.
For example take a lifting $\uu E$ with $\uu B = 0$ so that $\uu E$ is a direct sum of $\uu C$ and $\uu A$. If $\m H_A \not= \{0\}$ this is clearly not minimal. In this case $\m L_E = \m L_C \oplus \m L_A$ and if $\m L_A = \{0\}$, that is if $\uu A$ is chosen to be isometric, then $\m M (\uu E)$ is the identity.

\begin{corollary} \label{ME2}
If $M$ is any contractive multi-analytic operator then
the symbol of $\m M \circ \m E (M)$ is injective.
\end{corollary}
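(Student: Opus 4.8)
The plan is to obtain this corollary as an immediate consequence of the two preceding propositions, chaining them through the intermediate lifting $\m E(M)$. The key observation is that although $\m M \circ \m E$ need not reproduce $M$ itself---by Proposition \ref{ME1} the identity $\m M \circ \m E(M) = M$ holds exactly when the symbol of $M$ is already injective---the lifting produced in the intermediate step is always minimal, and minimality is precisely what forces the output symbol to be injective. So the corollary is really a statement that $\m M \circ \m E$ behaves like a projection onto the injective symbols.

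Concretely, I would first set $\uu E := \m E(M)$ and invoke Proposition \ref{E minimal}, which guarantees that for any contractive multi-analytic operator $M$ the lifting $\uu E$ of $\uu C$ is minimal. This is the step carrying the real content: it rests on the functional-model identification under which $\Gamma \otimes \m D_C$ becomes $\bigoplus_\alpha V^E_\alpha \m L_C$ in the dilation space, so that by Proposition \ref{prop:minimal} minimality reduces to the condition $\m H_A \cap (\Gamma \otimes \m D_C)^\perp = \{0\}$, which always holds by Proposition \ref{model}(a).

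With minimality of $\uu E$ in hand, I would then apply Proposition \ref{minimal to injective2}, which asserts that the symbol of $\m M(\uu E)$ is injective whenever $\uu E$ is a minimal contractive lifting. Since $\m M(\uu E) = \m M \circ \m E(M)$ by construction, this yields exactly the claim that the symbol of $\m M \circ \m E(M)$ is injective. I do not expect any genuine obstacle here: the substantive arguments have already been absorbed into Propositions \ref{E minimal} and \ref{minimal to injective2}, and the corollary merely records their composition. The only thing to be careful about is to state the two invocations in the correct order (first minimality of the intermediate lifting, then injectivity of the resulting symbol) and to note explicitly that $M$ is \emph{arbitrary}, with no injectivity hypothesis on its own symbol.
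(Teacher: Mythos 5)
Your proposal is correct and follows exactly the paper's own proof: apply Proposition \ref{E minimal} to conclude that $\m E(M)$ is minimal, then Proposition \ref{minimal to injective2} to conclude that the symbol of $\m M(\m E(M))$ is injective. The additional remarks about Proposition \ref{ME1} and the underlying functional-model argument are accurate but not needed beyond the two cited propositions.
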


\begin{proof}
This follows from Proposition \ref{E minimal} and Proposition \ref{minimal to injective2}.
\end{proof}

To obtain the one-to-one correspondence stated in Theorem \ref{char} 
we need to study the equivalence classes introduced in Definition \ref{def:equivalent}. 

\begin{proposition} \label{prop:equivalent}
Let $\uu E$ and $\uu E'$ be minimal contractive liftings of a row contraction $\uu C$. Then $\uu E$ and $\uu E'$ are unitarily equivalent liftings if and only if $M_{C,E}$ and $M_{C,E'}$ are equivalent.
\end{proposition}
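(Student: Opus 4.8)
The plan is to prove both implications by transporting everything to the minimal isometric dilation spaces, where $M_{C,E}$ carries a transparent geometric meaning. Write $\hat{\m H}_E = \m H_E \oplus (\Gamma \otimes \m D_E)$ for the dilation space of $\uu V^E$, and recall that it contains the copy $\m K_C := \m H_C \oplus (\Gamma \otimes \m D_C)$ of the minimal isometric dilation $\uu V^C$ of $\uu C$, where $\Gamma \otimes \m D_C$ is identified canonically with $\bigoplus_\alpha V^E_\alpha \m L_C$. Two facts drive the argument. First, by the very definition of $\m M$, for $\xi \in \Gamma \otimes \m D_E$ and $\zeta \in \Gamma \otimes \m D_C$ inside $\hat{\m H}_E$ one has
\[
\langle \xi, \zeta \rangle = \langle M_{C,E}\, \xi, \zeta \rangle ,
\]
since $M_{C,E}$ is the compression of the identity from $\bigoplus_\alpha V^E_\alpha \m L_E$ onto $\bigoplus_\alpha V^E_\alpha \m L_C$. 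Second, using minimality of the lifting I would establish the spanning decomposition $\hat{\m H}_E = \m H_C \oplus \big( (\Gamma \otimes \m D_C) \vee (\Gamma \otimes \m D_E) \big)$ with $\m H_C$ orthogonal to both wandering parts: indeed $\Gamma \otimes \m D_E = \hat{\m H}_E \ominus \m H_E \perp \m H_C$, while an induction on word length gives $E_\alpha \m H_C \subset \m K_C \vee (\Gamma \otimes \m D_E)$ (this subspace is $\uu V^E$-invariant and contains $\m H_C$), so that $\overline{\rm span}\{E_\alpha \m H_C\} = \m H_E$ forces the equality.

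For the implication from equivalent symbols to equivalent liftings, assume a unitary $v: \m D_E \to \m D_{E'}$ with $\Theta' \circ v = \Theta$, equivalently $M_{C,E'} \circ (I_\Gamma \otimes v) = M_{C,E}$. I would define $\hat u: \hat{\m H}_E \to \hat{\m H}_{E'}$ on the algebraic sum $\m H_C + (\Gamma \otimes \m D_C) + (\Gamma \otimes \m D_E)$ by the identity on $\m K_C$ and by $I_\Gamma \otimes v$ on $\Gamma \otimes \m D_E$. The key check is isometry across the overlap of the two non-orthogonal wandering parts, which is exactly where the hypothesis enters: for $\xi \in \Gamma \otimes \m D_E$ and $\zeta \in \Gamma \otimes \m D_C$,
\[
\langle \hat u \xi, \hat u \zeta \rangle = \langle (I_\Gamma \otimes v)\xi, \zeta \rangle = \langle M_{C,E'}(I_\Gamma \otimes v)\xi, \zeta\rangle = \langle M_{C,E}\,\xi, \zeta\rangle = \langle \xi, \zeta\rangle ,
\]
the remaining inner products being preserved since $I_\Gamma \otimes v$ is unitary and $\m H_C$ is orthogonal to both wandering parts on each side. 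By the spanning property $\hat u$ extends to a unitary, and it intertwines $\uu V^E$ and $\uu V^{E'}$ (on $\m K_C$ both act as $\uu V^C$ and $\hat u$ is the identity; on $\Gamma \otimes \m D_E$ both act as the shift and $I_\Gamma \otimes v$ commutes with it). As $\hat u$ carries $\Gamma \otimes \m D_E$ onto $\Gamma \otimes \m D_{E'}$, it carries $\m H_E = \hat{\m H}_E \ominus (\Gamma \otimes \m D_E)$ onto $\m H_{E'}$; setting $u := \hat u|_{\m H_E}$ gives a unitary fixing $\m H_C$, and $\hat u P_{\m H_E} = P_{\m H_{E'}}\hat u$ together with the intertwining relation yields $u E_j = E'_j u$, so $\uu E$ and $\uu E'$ are unitarily equivalent liftings.

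The converse runs the same route in reverse. Given a unitary $u: \m H_E \to \m H_{E'}$ intertwining the liftings and fixing $\m H_C$, uniqueness of the minimal isometric dilation extends $u$ to a unitary $\hat u: \hat{\m H}_E \to \hat{\m H}_{E'}$ intertwining $\uu V^E$ and $\uu V^{E'}$ with $\hat u(\m H_E) = \m H_{E'}$. Because $u$ fixes $\m H_C$, the canonical unitary $D_C(\bigoplus_i \xi_i) \mapsto \sum_i (V^E_i - C_i)\xi_i$ from $\m D_C$ onto $\m L_C$ is intertwined by $\hat u$ with its primed analogue; hence $\hat u$ is the identity on $\m L_C$ and, intertwining the shift, the identity on all of $\m K_C$. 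Since $\hat u$ maps the wandering subspace $\m L_E$ onto $\m L_{E'}$ while intertwining the two copies of the canonical shift, rigidity of the shift forces $\hat u|_{\Gamma \otimes \m D_E} = I_\Gamma \otimes v$ for a unitary $v: \m D_E \to \m D_{E'}$. Computing as in the display above, $\langle M_{C,E'}(I_\Gamma \otimes v)\xi, \zeta\rangle = \langle \hat u \xi, \hat u \zeta\rangle = \langle \xi, \zeta\rangle = \langle M_{C,E}\,\xi,\zeta\rangle$ for all such $\xi,\zeta$, so $M_{C,E'}\circ(I_\Gamma \otimes v) = M_{C,E}$ and therefore $\Theta' \circ v = \Theta$, i.e.\ $M_{C,E}$ and $M_{C,E'}$ are equivalent.

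The main obstacle is the well-definedness and isometry of the piecewise $\hat u$: the wandering parts $\Gamma \otimes \m D_C$ and $\Gamma \otimes \m D_E$ are genuinely non-orthogonal inside $\hat{\m H}_E$, so $\hat u$ cannot be prescribed summand by summand, and it is precisely the identity $M_{C,E'} \circ (I_\Gamma \otimes v) = M_{C,E}$ that makes the two prescriptions compatible on the overlap. The other point needing care is the spanning decomposition $\hat{\m H}_E = \m H_C \oplus \big((\Gamma \otimes \m D_C) \vee (\Gamma \otimes \m D_E)\big)$ extracted from minimality, which guarantees surjectivity of $\hat u$; everything else is a routine transcription between a lifting, its minimal isometric dilation, and the associated symbol.
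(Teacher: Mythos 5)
Your proposal is correct and follows essentially the same route as the paper's proof: both directions are carried out in the minimal isometric dilation spaces, using that $\hat{u}$ can be taken to be the identity on $\m H_C \oplus (\Gamma \otimes \m D_C)$, that minimality gives the spanning $\hat{\m H}_E = \overline{\rm span}\{\m H_C \oplus (\Gamma \otimes \m D_C), \Gamma \otimes \m D_E\}$, and that the projection description of $M_{C,E}$ makes the piecewise-defined $\hat{u}$ isometric, hence well defined. The only cosmetic difference is that you extract the spanning property directly from Definition \ref{def:minimal} by an invariance argument, whereas the paper obtains it from Proposition \ref{prop:minimal} by taking orthogonal complements.
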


\begin{proof}
The proof is easier to understand by always using the canonical identifications with full Fock spaces and defect spaces. Then $\m L_E = e_\emptyset \otimes \m D_E$ etc.

If $\uu E$ and $\uu E'$ are unitarily equivalent liftings then we can extend the unitary $u$ to a unitary $\hat{u}$ which intertwines the
minimal isometric dilations $\uu V^E$ and $\uu V^{E'}$ and restricts
to the identity on a space $\m H_C \oplus (\Gamma \otimes \m D_C)$
contained in both. By restricting $\hat{u}$ we obtain a unitary
$v: \m L_E \rightarrow \m L_{E'}$ providing the equivalence of
$M_{C,E}$ and $M_{C,E'}$. (Note that this direction is true even without assuming minimality.)

Conversely assume that $M_{C,E}$ and $M_{C,E'}$ are equivalent
via $v: \m D_E \rightarrow \m D_{E'}$.
Again we identify the space of the minimal isometric dilation of
$\uu C$ with $\m H_C \oplus (\Gamma \otimes \m D_C)$. Then by minimality of $\uu E$ we have $\m H_A \cap (\Gamma \otimes \m D_C)^\perp = \{0\}$ from Proposition \ref{prop:minimal}, with $\m H_A = \m H_E \ominus \m H_C$ and using the identifications announced in the beginning of the proof. Taking orthogonal complements in the space $\hat{\m H}_E$ of the minimal isometric dilation we note that $(\m H_A)^\perp = \m H_C \oplus (\Gamma \otimes \m D_E)$ and hence find
$\hat{\m H}_E = \overline{\rm{span}} \{\m H_C \oplus (\Gamma \otimes \m D_C), \Gamma \otimes \m D_E \}$. Similarly
$\hat{\m H}_{E'} = \overline{\rm{span}} \{\m H_C \oplus (\Gamma \otimes \m D_C), \Gamma \otimes \m D_{E'} \}$. Because $M_{C,E}$ resp. $M_{C,E'}$ are the orthogonal projections onto
$\m H_C \oplus (\Gamma \otimes \m D_C)$ restricted to
$\Gamma \otimes \m D_E$ resp. $\Gamma \otimes \m D_{E'}$ we can extend the unitary $I \otimes v: \Gamma \otimes \m D_E \rightarrow \Gamma \otimes \m D_{E'}$ to a unitary $\hat{u}: \hat{\m H}_E
\rightarrow \hat{\m H}_{E'}$ which is the identity on $\m H_C \oplus (\Gamma \otimes \m D_C)$ and intertwines the minimal isometric dilations $\uu V^E$ and $\uu V^{E'}$. Because $\m H_E = \hat{\m H}_E \ominus (\Gamma \otimes \m D_E)$ and $\m H_{E'} = \hat{\m H}_{E'} \ominus (\Gamma \otimes \m D_{E'})$ this restricts to a unitary
$u: \m H_E \rightarrow \m H_{E'}$ intertwining $\uu E$ and $\uu E'$ and being the identity on $\m H_C$.
\end{proof}

\begin{corollary} \label{cor:minimal}
For a contractive lifting $\uu E$ of a row contraction $\uu C$:
\[
\uu E \;\text{and}\;\, \m E \circ \m M (\uu E) \; \text{are unitarily equivalent}
\quad\Leftrightarrow\quad \uu E \;\text{minimal}.
\]
\end{corollary}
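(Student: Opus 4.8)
My approach is to treat this corollary as a bookkeeping consequence of the propositions just proved, once I record the one fact not yet made explicit: that minimality of a lifting is preserved under unitary equivalence of liftings. Indeed, suppose $u \colon \m H_E \to \m H_{E'}$ is a unitary implementing a unitary equivalence of liftings, so that it intertwines $\uu E$ and $\uu E'$ and is the identity on $\m H_C$. Then for every $x \in \m H_C$ and every word $\alpha$ one has $u\, E_\alpha x = E'_\alpha\, u x = E'_\alpha x$, so $u$ maps $\overline{\rm span}\{E_\alpha x : x \in \m H_C, \text{all } \alpha\}$ onto $\overline{\rm span}\{E'_\alpha x : x \in \m H_C, \text{all } \alpha\}$. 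As $u$ is surjective, the first of these spans is all of $\m H_E$ if and only if the second is all of $\m H_{E'}$; by Definition \ref{def:minimal}, $\uu E$ is minimal exactly when $\uu E'$ is.

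For the implication ``$\Leftarrow$'', suppose $\uu E$ is minimal. By Proposition \ref{minimal to injective2} the symbol $\Theta$ of $M_\Theta := \m M(\uu E)$ is injective, so setting $\uu E' := \m E(M_\Theta) = \m E \circ \m M(\uu E)$ gives a minimal lifting by Proposition \ref{E minimal}, and Proposition \ref{ME1} (which applies precisely because $\Theta$ is injective) yields $\m M(\uu E') = \m M \circ \m E(M_\Theta) = M_\Theta = \m M(\uu E)$. Thus $M_{C,E}$ and $M_{C,E'}$ coincide, in particular they are equivalent in the sense of Definition \ref{def:equivalent}; since both $\uu E$ and $\uu E'$ are minimal contractive liftings of $\uu C$, Proposition \ref{prop:equivalent} gives that $\uu E$ and $\uu E' = \m E \circ \m M(\uu E)$ are unitarily equivalent.

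The reverse implication ``$\Rightarrow$'' is then immediate from the invariance recorded above: if $\uu E$ is unitarily equivalent to $\m E \circ \m M(\uu E)$, and the latter is minimal by Proposition \ref{E minimal} (being in the range of $\m E$), then $\uu E$ must be minimal as well. The proof is essentially just a chaining of the established facts, so there is no serious obstacle; the one point demanding care is the consistency of the canonical identifications, namely that the equality $\m M(\uu E') = M_\Theta$ from Proposition \ref{ME1} holds under the canonical identification of $\m D$ with $\m D_E$, so that $M_{C,E}$ and $M_{C,E'}$ are genuinely equivalent symbols (sharing the target $\Gamma \otimes \m D_C$) and Proposition \ref{prop:equivalent} is applicable.
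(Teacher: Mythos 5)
Your proof is correct and follows essentially the same route as the paper: the forward implication via Proposition \ref{E minimal} together with invariance of minimality under unitary equivalence, and the converse via the chain Proposition \ref{minimal to injective2} $\rightarrow$ Proposition \ref{ME1} $\rightarrow$ Proposition \ref{prop:equivalent}. The only difference is that you spell out the (easy) verification that minimality is preserved under unitary equivalence of liftings, a fact the paper asserts without proof.
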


\begin{proof}
By Proposition \ref{E minimal} we know that $\m E \circ \m M (\uu E)$
is always minimal. Because minimality is preserved by unitary equivalence, $\uu E$ and $\m E \circ \m M (\uu E)$ can only be unitarily equivalent if $\uu E$ is minimal. Conversely, if $\uu E$ is minimal then
by Proposition \ref{minimal to injective2} the symbol of $\m M(\uu E)$
is injective, hence by Proposition \ref{ME1}
\[
\m M \circ \m E \circ \m M (\uu E) = \m M (\uu E)
\]
and now Proposition \ref{prop:equivalent} implies that $\uu E$ and $\m E \circ \m M (\uu E)$ are unitarily equivalent.
\end{proof}

We remark that there is no canonical identification of $\m H_E \ominus \m H_C$ and
$\m H_{\m E \circ \m M (\uu E)} \ominus \m H_C$ and this unitary
equivalence is the best we can expect.
\\

By combining the results obtained so far we are now able to complete the proof of Theorem \ref{char} and Corollary \ref{char2} as follows.

\begin{proof}
Let us denote by $\tilde{\m M}$ respectively $\tilde{\m E}$ the mappings between equivalence classes which are given by $\m M$ respectively $\m E$ on representatives. We have to prove that these are well defined and inverse to each other.

From Proposition \ref{prop:equivalent} we find that $\tilde{\m M}$ is well defined and it maps into classes of injective symbols by Proposition \ref{minimal to injective2}. Conversely, assume that $M$ and $M'$ have injective symbols which are equivalent to each other. The liftings
$\uu E = \m E(M)$ and $\uu E' = \m E(M')$ are both minimal by Proposition \ref{E minimal}.
By Proposition \ref{ME1} we have $\m M(\uu E) =
M$ and $\m M(\uu E') = M'$ and we conclude from
Proposition \ref{prop:equivalent} that $\uu E$ and $\uu E'$ are unitarily equivalent. Hence
$\tilde{\m E}$ is well defined and maps into classes of minimal liftings. We have $\tilde{\m M} \circ \tilde{\m E} = id$ from Proposition \ref{ME1} and we have $\tilde{\m E} \circ \tilde{\m M} = id$ by Corollary \ref{cor:minimal}. Proposition \ref{prop:injective}
shows that the dimension of $\m D$ is equal to the defect of $\uu E$.

Note further that Corollary \ref{char2} is nothing but a restatement of Proposition \ref{minimal to injective2} and Proposition \ref{ME1}
established above. 
\end{proof}

We have now completely proved Theorem \ref{char} and Corollary \ref{char2}. Let us have a look at two easy examples. First, if $\m H_A = \{0\}$ then we have the trivial lifting $\uu E = \uu C$. This is a minimal lifting and the characteristic function $M_{C,E}$ is the identity (here $\m L_E = \m L_C$). Second, if $\uu E$ is the minimal isometric dilation of the row contraction $\uu C$ then this is a minimal lifting and the characteristic function $M_{C,E}$ is the zero function. Here $\m L_E = \{0\}$, so it is the zero function on the zero space and hence injective: no contradiction to Theorem \ref{char}.
More complicated examples will appear in the following sections.

We finish this section with an application.
The one-to-one correspondence established in Theorem \ref{char} very naturally leads to the possibility to examine the structure of the set of minimal contractive liftings via the corresponding characteristic functions.
Along these lines we obtain a result about the factorization of the characteristic function of  a minimal lifting.

\begin{theorem} \label{factor}
Let $\uu C$ be a row contraction on a Hilbert space $\m H_C$. If $\uu E$ is
a minimal contractive lifting of $\uu C$ on a Hilbert space $\m H_E \supset \m H_C$ and $\uu E' = (E'_1, \ldots, E'_d)$ is
a minimal contractive lifting of $\uu E$ on a Hilbert space $\m H_{E'} \supset \m H_E$ then $\uu E'$ is a minimal contractive lifting of $\uu C$ and for the characteristic function we have
\[
M_{C,E'} = M_{C,E} M_{E,E'}.
\]
Conversely, if $\uu E'$ is
a minimal contractive lifting of $\uu C$ on a Hilbert space $\m H_{E'} \supset \m H_C$ and the characteristic function $M_{C,E'} :\Gamma
\otimes \m D_{E'} \to \Gamma \otimes \m D_C$
can be written as
\[
M_{C, E'} = M_1 M_2
\]
where $M_1: \Gamma
\otimes \m D \to \Gamma \otimes \m D_C$ and 
$M_2 : \Gamma \otimes \m D_{E'} \to \Gamma \otimes \m D$
are contractive multi-analytic operators, for a Hilbert space $\m D$ and both
with injective symbols,
then there exists a minimal contractive lifting
 $\uu E$ of $\uu C$ such that $\uu E'$
 is a minimal contractive lifting of $\uu E$, and
$M_1$ and $M_2$ are equivalent to $M_{C,E}$ and $M_{E, E'}$ respectively.
\end{theorem}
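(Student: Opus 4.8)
\emph{Forward direction, set-up.} The plan is to realise all three row contractions inside one space, the minimal isometric dilation $\uu V^{E'}$ of $\uu E'$ on $\hat{\m H}_{E'} = \m H_{E'} \oplus (\Gamma \otimes \m D_{E'})$. That $\uu E'$ is a contractive lifting of $\uu C$ is immediate: $\m H_C$ is co-invariant for $\uu E$ and $\m H_E$ is co-invariant for $\uu E'$, so $\m H_C$ is co-invariant for $\uu E'$ and the compression of $\uu E'$ to $\m H_C$ is the compression of $\uu E$ to $\m H_C$, namely $\uu C$; contractivity is assumed. Since $\m H_C \subseteq \m H_E \subseteq \m H_{E'}$ are all co-invariant, $\uu V^{E'}$ also dilates $\uu E$ and $\uu C$, and restricting it to the smallest invariant subspaces containing $\m H_E$ resp.\ $\m H_C$ yields minimal isometric dilations $\uu V^E$ on $\hat{\m H}_E$ and $\uu V^C$ on $\hat{\m H}_C$, with $\hat{\m H}_C \subseteq \hat{\m H}_E \subseteq \hat{\m H}_{E'}$ and wandering subspaces $\m L_C, \m L_E, \m L_{E'}$. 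Writing $\m R_C := \bigoplus_\alpha V^{E'}_\alpha \m L_C$, $\m R_E := \bigoplus_\alpha V^{E'}_\alpha \m L_E$ and $\m R_{E'} := \bigoplus_\alpha V^{E'}_\alpha \m L_{E'}$ (canonically $\Gamma \otimes \m D_C$, $\Gamma \otimes \m D_E$, $\Gamma \otimes \m D_{E'}$), I would first record, using uniqueness of the minimal isometric dilation, that the three characteristic functions are the restricted projections $M_{C,E'} = P_{\m R_C}|_{\m R_{E'}}$, $M_{E,E'} = P_{\m R_E}|_{\m R_{E'}}$ and $M_{C,E} = P_{\m R_C}|_{\m R_E}$; for the last one note that $\uu V^{E'}$ restricts to $\uu V^E$ on $\hat{\m H}_E$ and that $\m R_C \subseteq \hat{\m H}_C \subseteq \hat{\m H}_E$, so this agrees with the intrinsic definition of $M_{C,E}$.

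\emph{The factorization.} With these descriptions the identity $M_{C,E'} = M_{C,E} M_{E,E'}$ becomes the operator identity
\[
P_{\m R_C}\big|_{\m R_{E'}} = P_{\m R_C} P_{\m R_E}\big|_{\m R_{E'}},
\]
equivalently $P_{\m R_C}(\zeta - P_{\m R_E}\zeta) = 0$ for every $\zeta \in \m R_{E'}$. I would prove it by showing $\zeta - P_{\m R_E}\zeta \perp \m R_C$: since $\m R_C \subseteq \hat{\m H}_E = \m H_E \oplus \m R_E$, each $g \in \m R_C$ splits as $g = g_0 + g_1$ with $g_0 \in \m H_E$ and $g_1 \in \m R_E$; the $g_1$-part vanishes because $\zeta - P_{\m R_E}\zeta \perp \m R_E$, while the $g_0$-part vanishes because $\m R_E \perp \m H_E$ and $\m R_{E'} \perp \m H_{E'} \supseteq \m H_E$, so that both $\zeta$ and $P_{\m R_E}\zeta$ are orthogonal to $g_0$. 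This short orthogonality computation is the clean core of the forward direction.

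\emph{Minimality --- the main obstacle.} It remains to show $\uu E'$ is minimal over $\uu C$. By Proposition \ref{prop:minimal} this is the assertion $(\m H_{E'} \ominus \m H_C) \cap \m R_C^\perp = \{0\}$, which I would try to deduce from the two hypotheses, read inside $\hat{\m H}_{E'}$: $(\m H_E \ominus \m H_C) \cap \m R_C^\perp = \{0\}$ ($\uu E$ minimal over $\uu C$) and $(\m H_{E'} \ominus \m H_E) \cap \m R_E^\perp = \{0\}$ ($\uu E'$ minimal over $\uu E$). The plan is to take $0 \neq w \in (\m H_{E'} \ominus \m H_C) \cap \m R_C^\perp$, decompose $w = w_A + w_{A'}$ along $\m H_E \ominus \m H_C$ and $\m H_{E'} \ominus \m H_E$, and play the two conditions against each other. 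I expect this to be the hard part: $\m R_C$ lies skew inside $\m H_E \oplus \m R_E$ --- only the generating relation $\m L_C \subseteq \m H_E \oplus \m L_E$ is transparent --- so the projection of $w$ onto the first layer does not interact cleanly with $\m R_C$, and controlling the cross terms coupling $w_A$ and $w_{A'}$ seems to require the finer geometry of Section 2 (Lemma \ref{lem:geo} and the relation $\m L_C \subseteq \m H_E \oplus \m L_E$) rather than a soft projection argument.

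\emph{Converse.} Here I would argue constructively. Given $M_{C,E'} = M_1 M_2$ with $M_1 : \Gamma \otimes \m D \to \Gamma \otimes \m D_C$ and $M_2 : \Gamma \otimes \m D_{E'} \to \Gamma \otimes \m D$ both having injective symbols, put $\uu E := \m E_C(M_1)$. By Proposition \ref{E minimal} this is a minimal contractive lifting of $\uu C$, and by Proposition \ref{ME1} (injectivity of $M_1$) we have $M_{C,E} = M_1$, identifying $\m D_E$ with $\m D$. Next put $\uu E'' := \m E_E(M_2)$, a minimal contractive lifting of $\uu E$ with $M_{E,E''} = M_2$. The forward direction then yields that $\uu E''$ is a minimal contractive lifting of $\uu C$ with $M_{C,E''} = M_{C,E} M_{E,E''} = M_1 M_2 = M_{C,E'}$. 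As the given $\uu E'$ is also a minimal contractive lifting of $\uu C$ with the same characteristic function, Proposition \ref{prop:equivalent} (the equivalence part of Theorem \ref{char}) supplies a unitary equivalence over $\uu C$ between $\uu E'$ and $\uu E''$; transporting the intermediate subspace $\m H_E \subseteq \m H_{E''}$ through this equivalence realises a copy of $\uu E$ with $\m H_C \subseteq \m H_E \subseteq \m H_{E'}$, exhibiting $\uu E'$ as a minimal contractive lifting of $\uu E$ and giving $M_1 \sim M_{C,E}$ and $M_2 \sim M_{E,E'}$. The point to verify is that this equivalence, being the identity on $\m H_C$, carries the $\uu E$-structure correctly, which is exactly what Proposition \ref{prop:equivalent} together with Corollary \ref{cor:minimal} guarantees.
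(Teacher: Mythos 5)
Your factorization identity and your converse are sound, and both follow essentially the paper's route: realise the characteristic functions as restricted orthogonal projections inside the single dilation space $\hat{\m H}_{E'}$ and exploit nesting. The only stylistic difference is that the paper projects onto the nested dilation spaces $\hat{\m H}_C \subset \hat{\m H}_E \subset \hat{\m H}_{E'}$, which makes the factorization $P_{C,E'} = P_{C,E} P_{E,E'}$ immediate, whereas you project onto the shift parts $\m R_C, \m R_E, \m R_{E'}$, which are not nested and therefore require the short orthogonality computation you give; that computation is correct. Your converse (build $\m E_C(M_1)$, lift it by $M_2$, apply the forward direction, compare with $\uu E'$ via Proposition \ref{prop:equivalent}, and rotate the intermediate space through the resulting unitary) is exactly the paper's argument.

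The genuine gap is the step you flagged and left open: that $\uu E'$ is a \emph{minimal} lifting of $\uu C$. This cannot be skipped --- without it, $M_{C,E'}$ is not the characteristic function of a minimal lifting, and your converse explicitly invokes the forward direction (to conclude that $\uu E''$ is minimal over $\uu C$), so the gap propagates there. However, contrary to your expectation, no finer geometry from Section 2 is needed: the soft projection argument does work, because the cross terms you worried about vanish identically. The key observation is that $\m H_{A'} := \m H_{E'} \ominus \m H_E$ is orthogonal to \emph{all} of $\hat{\m H}_E = \m H_E \oplus \m R_E$: indeed $\m H_{A'} \perp \m H_E$ by definition, and $\m H_{A'} \subseteq \m H_{E'} \perp \m R_E$. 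Now take $w \in (\m H_{E'} \ominus \m H_C) \cap \m R_C^{\perp}$ and decompose $w = w_A + w_{A'}$ with $w_A \in \m H_E \ominus \m H_C$ and $w_{A'} \in \m H_{A'}$, exactly as you proposed. Since $\m R_C \subseteq \hat{\m H}_C \subseteq \hat{\m H}_E$, the observation gives $w_{A'} \perp \m R_C$ automatically, hence $w_A = w - w_{A'} \perp \m R_C$ as well; minimality of $\uu E$ over $\uu C$ (Proposition \ref{prop:minimal}) forces $w_A = 0$. Then $w = w_{A'} \in (\m H_{E'} \ominus \m H_E) \cap \m R_E^{\perp}$, and minimality of $\uu E'$ over $\uu E$ forces $w = 0$. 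This is the content behind the paper's remark that ``it is easy to check that contractivity and minimality are both preserved if we iterate liftings''; with this paragraph inserted, your proof is complete and agrees with the paper's.
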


\begin{proof}
The first half is Theorem 4.1 of \cite{DG11}. Its proof can be simplified
in the present setting. It is easy to check that contractivity and minimality are both preserved if we iterate liftings. Further let
$\hat{\m H}_{E'} = \m H_{E'} \oplus (\Gamma \otimes \m D_{E'}),\,
\hat{\m H}_E = \m H_E \oplus (\Gamma \otimes \m D_E),\,
\hat{\m H}_C = \m H_C \oplus (\Gamma \otimes \m D_C)$ the spaces of minimal isometric dilations
for which we have $\hat{\m H}_{E'} \supset \hat{\m H}_E \supset
\hat{\m H}_C$ if we iterate liftings. We can think of $M_{C, E'}$ as
the orthogonal projection $P_{C,E'}$ from $\hat{\m H}_{E'}$ onto $\hat{\m H}_C$ restricted to $\Gamma \otimes \m D_{E'}$ because this maps to $\Gamma \otimes \m D_C$. Similar for $M_{C, E}$ and $M_{E, E'}$. With this observation the factorization $M_{C,E'} = M_{C,E} M_{E,E'}$ follows from the obvious factorization
$P_{C,E'} = P_{C,E} P_{E,E'}$.

Let us now prove the converse direction. We use the map $\m E$, in particular Corollary \ref{char2} and Proposition \ref{prop:injective}.
From $M_1$ with its injective symbol $\Theta_1: \m D \rightarrow \Gamma \otimes \m D_C$ we can build the minimal contractive lifting
$\tilde{\uu E} = E_{C,M_1}$ of $\uu C$ which has characteristic function $M_1$ and defect equal to $\dim \m D$. Then from $M_2$ with its injective symbol $\Theta_2: \m D_{E'} \rightarrow \Gamma \otimes \m D$ we can build the minimal contractive lifting $\tilde{E}' = E_{{\tilde E},M_2}$ of $\tilde{\uu E}$ which has characteristic function $M_2$.
We can think of $\tilde{\uu E}'$ as a minimal contractive lifting of $\uu C$ with characteristic function $M_1 M_2$, by the first part above (or Theorem 4.1 in \cite{DG11}).
Hence with the assumption that $\uu E'$ has characteristic function $M_1 M_2$ we can use Proposition \ref{prop:equivalent} to conclude that $\uu E'$ and
$\tilde{\uu E}'$ are unitarily equivalent as liftings of $\uu C$. If we use this unitary to rotate the lifting $\tilde{\uu E}$ then we obtain a minimal contractive lifting $\uu E$ of $\uu C$ with the properties required.
\end{proof}

\section{Characteristic Functions of Completely Non-coisometric Row Contractions}

Recall the notion of a characteristic function of a completely non-coisometric row contraction $\uu A$ on $\m H_A$ from \cite{Po89b}: The space $\hat{\m H}_A = \m H_A \oplus (\Gamma \otimes \m D_A)$ of the minimal isometric dilation $\uu V^A$ not only contains the wandering subspace $\m L_A = e_\emptyset \otimes \m D_A$ but also the wandering subspace
$\m L_{*,A} := \ker  (\uu V^A)^*$
arising in the Wold decomposition of $\uu V^A$ into the direct sum of a row unitary part and a row shift part. The space
$\m L_{*,A}$ is canonically identified with the $*$-defect space $\m D_{*,A}$, the closure of $D_{*,A} \m H_A$. Here
$D_{*,A} := (I - \uu A \uu A^*)^{\frac{1}{2}}$ and
the canonical unitary from $\m D_{*,A}$ onto $\m L_{*,A}$ is given by $D_{*,A} \xi \mapsto (I - \sum^d_{i=1} V^A_i A^*_i) \xi$ (where $\xi \in \m H_A$).
Then the orthogonal projection onto the space of the row shift part restricted to $\Gamma \otimes \m D_A$ can be thought of as a contractive multi-analytic operator $M_{\Theta_A}: \Gamma \otimes \m D_A \rightarrow \Gamma \otimes \m D_{*,A}$ with a symbol
$\Theta_A: \m D_A \rightarrow \Gamma \otimes \m D_{*,A}$.
This is called the {\em characteristic function} of $\uu A$.

It is verified in \cite{Po89b}, Theorem 4.1, that if we construct the functional model for $M_{\Theta_A}$ and then form the compression to $\m H_A$ (as we did more generally in Section 2) then we recover the original $\uu A$ we started from, up to unitary equivalence. Our Proposition \ref{model} implies that this is only possible if $\uu A$ is
completely non-coisometric and the functional model constructed from $M_{\Theta_A}$ adds the insight that all completely non-coisometric row contractions can be obtained in this way.

Recall that a contractive multi-analytic operator $M = M_\Theta$ with symbol $\Theta: \m D \rightarrow \Gamma \otimes \m L$ is called {\em purely contractive} if $\| P_{e_\emptyset \otimes \m L}\, \Theta(\delta) \| < \| \delta \|$ for all $0 \not= \delta \in \m D$.
We attach a name, motivated in Section 5, to another property which already appears in \cite{Po89b} as (5.1).

\begin{definition} \label{sz}
We say that a contractive multi-analytic operator $M: \Gamma \otimes \m D \to \Gamma \otimes \m L$ satisfies the Szeg\"{o} condition if
\[
\overline{\Delta(\Gamma \otimes \m D)} =
\overline{\Delta\big((\Gamma \otimes \m D) \ominus (e_\emptyset \otimes \m D)\big)},
\]
where $\Delta = (I- M^* M)^{\frac{1}{2}}$.
\end{definition}

Recall that multi-analytic operators $M$ and $M'$ with symbols $\Theta: \m D \rightarrow \Gamma \otimes \m L$ and $\Theta': \m D' \rightarrow \Gamma \otimes \m L'$ are said to {\it coincide} (compare
\cite{NF70,Po89b}) if there are
unitaries $v_{\m D}: \m D \rightarrow \m D'$ and $v_{\m L}: \m L \rightarrow \m L'$ so that $\Theta' \circ v_{\m D} = (I \otimes v_{\m L}) \circ \Theta$.

With this terminology we can now restate one of the main results obtained in sections 4 and 5 of \cite{Po89b},  as follows.

\begin{theorem} [\cite{Po89b}] \label{popescu}
 A contractive multi-analytic operator coincides with the characteristic function of a completely non-coisometric row contraction if and only if it is purely contractive and satisfies the Szeg\"{o} condition. 
\end{theorem}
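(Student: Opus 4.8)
The plan is to reduce everything to the functional model of Section 2 and to a single geometric fact: under the Szeg\"o condition the row isometry $\uu V$ of the model is as small as possible. First I would analyse the Wold decomposition of $\uu V$ on $\m H_\Theta = (\Gamma \otimes \m L) \oplus \overline{\Delta_\Theta(\Gamma \otimes \m D)}$. Since $\Gamma \otimes \m L$ is reducing for $\uu V$ and $\uu V$ acts on it as the canonical row shift, while on the second summand $V_j$ sends $\Delta_\Theta \xi \mapsto \Delta_\Theta(L_j \otimes I)\xi$, the range $\bigoplus_j V_j \overline{\Delta_\Theta(\Gamma\otimes\m D)}$ equals $\overline{\Delta_\Theta((\Gamma\otimes\m D)\ominus(e_\emptyset\otimes\m D))}$. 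Hence the Szeg\"o condition of Definition \ref{sz} holds if and only if $\uu V$ restricted to the $\Delta_\Theta$-summand is a row unitary, equivalently the shift part of $\uu V$ is exactly $\Gamma \otimes \m L$ and $\ker \uu V^* = e_\emptyset \otimes \m L$. This is the key lemma I would isolate.

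Next I would record the relation between the two notions of smallness of the symbol. One direction is immediate: if $\Theta$ has no constant directions (Definition \ref{non-constant}) then it is purely contractive, because $\|P_{e_\emptyset\otimes\m L}\Theta(\delta)\| = \|\delta\|$ forces $\Theta(\delta) \in e_\emptyset\otimes\m L$. The converse holds once we add the Szeg\"o condition: if $\Theta(\delta)\in e_\emptyset\otimes\m L$ for some $\delta\neq 0$, then by Lemma \ref{lem:fm}(a) and Lemma \ref{lem:geo}(b) the vector $x = \Theta(\delta)\oplus\Delta_\Theta(\delta)$ lies in $\ker\uu V^*$; the Szeg\"o condition forces $x \in e_\emptyset\otimes\m L$, hence $\Delta_\Theta(\delta)=0$ and $\|\Theta(\delta)\|=\|\delta\|$, contradicting pure contractivity. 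Thus ``purely contractive and Szeg\"o'' is equivalent to ``no constant directions and Szeg\"o'', and by Proposition \ref{min-dil} this guarantees that $\uu V$ is already the minimal isometric dilation $\uu V^A$ of the compression $\uu A$.

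With these preparations the equivalence is short. For the backward implication, assume $M_\Theta$ is purely contractive and satisfies Szeg\"o. Then $\uu V = \uu V^A$, so $\Gamma\otimes\m D_A$ is canonically identified with $\bigoplus_\alpha V_\alpha \m L_A = \m H_{\m D}$ via $W_\Theta$ (using Theorem \ref{fm}(c)), and by the key lemma the shift part of $\uu V^A$ is $\Gamma\otimes\m L$, so $\m D_{*,A}\cong\m L$. Popescu's characteristic function $M_{\Theta_A}$ is the projection onto this shift part restricted to $\Gamma\otimes\m D_A$; under the identifications just described this is literally the projection onto $\Gamma\otimes\m L$ restricted to $\m H_{\m D}$ precomposed with $W_\Theta$, which is $M_\Theta$. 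Hence $M_\Theta$ coincides with $M_{\Theta_A}$, and $\uu A$ is completely non-coisometric by Proposition \ref{model}(b). For the forward implication I would use that coincidence preserves both pure contractivity and the Szeg\"o condition, since they are expressed through $\Theta$ and $\Delta_\Theta$ and transform covariantly under the unitaries $v_{\m D}, v_{\m L}$; so it suffices to prove that the characteristic function of any completely non-coisometric $\uu A$ has these two properties. Pure contractivity is the statement $\m L_A \cap \m L_{*,A} = \{0\}$, which I would verify by a direct computation with the canonical unitary $D_{*,A}\xi\mapsto(I-\sum_i V^A_i A^*_i)\xi$; Szeg\"o follows from the key lemma once one observes that in the Wold decomposition of $\uu V^A$ the shift part is generated by $\m L_{*,A}$, that is, equals $\Gamma\otimes\m D_{*,A}$.

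The step I expect to be the main obstacle is verifying the full coincidence in the backward direction, not merely the matching of dimensions $\dim\m D=\dim\m D_A$ and $\dim\m L=\dim\m D_{*,A}$, since this requires carefully tracking Popescu's definition of $\Theta_A$ as a compression to the shift part of $\uu V^A$ through the canonical identifications of $\m L_A$ with $\m D_A$ and of $\m L_{*,A}$ with $\m D_{*,A}$, and checking that these intertwine $W_\Theta$. A closely related difficulty is the intrinsic proof of the Szeg\"o property of characteristic functions in the forward direction, which rests on identifying the row-unitary part of $\uu V^A$ with the $\Delta$-summand of the functional model; here I would lean on the recovery result of \cite{Po89b}, Theorem 4.1, together with the key lemma above.
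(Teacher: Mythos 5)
Your proposal is correct, and it takes a genuinely different route from the paper. The paper never proves Theorem \ref{popescu} directly: it quotes it from \cite{Po89b} and then remarks, after Theorem \ref{cnc}, that it follows from Theorem \ref{cnc} --- the backward direction via Lemma \ref{pure}, Corollary \ref{char2} and the first part of Theorem \ref{cnc}, the forward direction via the second part of Theorem \ref{cnc}, whose proof invokes Corollary \ref{pop-min}, i.e.\ the external fact from \cite{Po89b} that the functional model of $M_{\Theta_A}$ is a minimal isometric dilation. You bypass the lifting machinery entirely (no auxiliary row contraction $\uu C$, no Theorem \ref{char}, no equivalence-class bookkeeping) and argue inside the Section 2 model. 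Your key lemma --- the Szeg\"{o} condition holds iff $\ker \uu V^* = (e_\emptyset \otimes \m L) \oplus 0$, iff the shift part of the Wold decomposition of $\uu V$ is exactly $\Gamma \otimes \m L$ --- is exactly the observation the paper embeds, in conditional form (``under the condition $\m L_A = \m L_E$''), in the proof of Theorem \ref{cnc}; isolating it unconditionally is a small improvement. With it, your backward direction (purely contractive plus Szeg\"{o} $\Rightarrow$ no constant directions $\Rightarrow$ $\uu V = \uu V^A$ by Proposition \ref{min-dil}; shift part $= \Gamma \otimes \m L$, so $M_\Theta$ and $M_{\Theta_A}$ are the same restricted orthogonal projection read through the canonical unitaries, and $\uu A$ is completely non-coisometric by Proposition \ref{model}(b)) is complete and self-contained; the identification-tracking you flag as the main obstacle does go through, since coincidence is by definition equality up to precisely those unitaries. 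In the forward direction, your verification of pure contractivity via $\m L_A \cap \m L_{*,A} = \{0\}$ (project onto $\m H_A$ using the two canonical unitaries; the intersection is trivial for \emph{every} row contraction) is correct and is an argument the paper does not contain --- there, pure contractivity is deduced from no-constant-directions, which itself rests on the external minimality fact. For the Szeg\"{o} property of $\Theta_A$ you lean on \cite{Po89b}, Theorem 4.1; this is the same single external input the paper concedes it needs for its ``new proof,'' so your argument is no less self-contained than the paper's route, and for this particular statement it is leaner. What the paper's longer detour buys instead is the exhibition of Popescu's theory as a special case of the lifting classification, which is the actual purpose of its Section 4.
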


We want to discuss the relationship between this result and our theory of characteristic functions of liftings. 

\begin{lemma} \label{pure}
Let $\Theta:\m D \to \Gamma \otimes \m L$ be a symbol. If $\Theta$ has no constant directions (see Definition \ref{non-constant}) then it is purely contractive. If the Szeg\"{o} condition for $M_\Theta$ holds then, conversely, purely contractive implies having no constant directions.
\end{lemma}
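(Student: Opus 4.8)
The plan is to prove the two implications separately. The first is a short consequence of the contractivity of $M_\Theta$, while the second is the substantive half and rests on a single orthogonality computation that is then closed off by the Szeg\"{o} condition.

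For the first implication I would argue at the level of a single vector. Fix $0\neq\delta\in\m D$. Since $M_\Theta$ is a contraction and $\Theta(\delta)=M_\Theta(e_\emptyset\otimes\delta)$, one has the chain $\|P_{e_\emptyset\otimes\m L}\Theta(\delta)\|\le\|\Theta(\delta)\|\le\|e_\emptyset\otimes\delta\|=\|\delta\|$. I would then note that equality $\|P_{e_\emptyset\otimes\m L}\Theta(\delta)\|=\|\delta\|$ forces equality in the first step, which is exactly the statement that $\Theta(\delta)$ has no component off $e_\emptyset\otimes\m L$, i.e.\ $\Theta(\delta)\in e_\emptyset\otimes\m L$. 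Having no constant directions then yields $\delta=0$, a contradiction. Hence the inequality is strict for every nonzero $\delta$, so $\Theta$ is purely contractive. This part is routine.

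For the second implication I would prove the contrapositive: assuming the Szeg\"{o} condition, a constant direction forces a failure of pure contractivity. So suppose $0\neq\delta_0\in\m D$ with $\Theta(\delta_0)\in e_\emptyset\otimes\m L$. Because $\Delta_\Theta(e_\emptyset\otimes\delta_0)\in\Delta_\Theta(\Gamma\otimes\m D)\subset\overline{\Delta_\Theta(\Gamma\otimes\m D)}$, the Szeg\"{o} equality places this vector directly in $\overline{\Delta_\Theta\big((\Gamma\otimes\m D)\ominus(e_\emptyset\otimes\m D)\big)}$. The heart of the argument, and the only real obstacle, is to show that the \emph{same} vector is orthogonal to $\Delta_\Theta\big((\Gamma\otimes\m D)\ominus(e_\emptyset\otimes\m D)\big)$. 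Since this last subspace is spanned by the vectors $(L_j\otimes I)\xi$ with $\xi\in\Gamma\otimes\m D$, it suffices to compute, for each such $\xi$ and each $j$,
\[
\langle\Delta_\Theta(e_\emptyset\otimes\delta_0),\,\Delta_\Theta(L_j\otimes I)\xi\rangle
=\langle(e_\emptyset\otimes\delta_0),(L_j\otimes I)\xi\rangle-\langle M_\Theta(e_\emptyset\otimes\delta_0),\,M_\Theta(L_j\otimes I)\xi\rangle,
\]
using $\Delta_\Theta^2=I-M_\Theta^*M_\Theta$. The first term vanishes because $e_\emptyset\otimes\delta_0$ has length $0$ while $(L_j\otimes I)\xi$ has length at least $1$; for the second term I would invoke the intertwining relation $M_\Theta(L_j\otimes I)=(L_j\otimes I)M_\Theta$ to rewrite it as $\langle\Theta(\delta_0),(L_j\otimes I)M_\Theta\xi\rangle$, which is $0$ since $\Theta(\delta_0)\in e_\emptyset\otimes\m L$ is orthogonal to the shifted vector $(L_j\otimes I)M_\Theta\xi$. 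Thus the inner product vanishes for all $\xi,j$.

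Combining the two observations, $\Delta_\Theta(e_\emptyset\otimes\delta_0)$ lies in a closed subspace and is simultaneously orthogonal to it, hence is zero. Finally I would translate this back into a norm statement: $\|\Delta_\Theta(e_\emptyset\otimes\delta_0)\|^2=\|\delta_0\|^2-\|\Theta(\delta_0)\|^2$, and since $\Theta(\delta_0)\in e_\emptyset\otimes\m L$ we have $\|\Theta(\delta_0)\|=\|P_{e_\emptyset\otimes\m L}\Theta(\delta_0)\|$. Therefore $\|P_{e_\emptyset\otimes\m L}\Theta(\delta_0)\|=\|\delta_0\|$ with $\delta_0\neq0$, which is precisely the failure of pure contractivity. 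This completes the contrapositive and hence the lemma.
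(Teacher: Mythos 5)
Your proof is correct and takes essentially the same approach as the paper: the same strictness argument for the first implication, and for the second the same key computation that $\Delta_\Theta(e_\emptyset\otimes\delta_0)\perp\Delta_\Theta(L_j\otimes I)(\Gamma\otimes\m D)$ for all $j$, via the intertwining relation and the degree-zero position of $\Theta(\delta_0)$. The only difference is organizational: the paper assumes pure contractivity plus a constant direction and concludes that the Szeg\"{o} condition fails (using $\Delta_\Theta(\delta)\neq 0$), whereas you assume the Szeg\"{o} condition plus a constant direction and conclude that pure contractivity fails (using $\Delta_\Theta(\delta_0)=0$) --- the same implication, contraposed on a different conjunct.
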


\begin{proof}
Recall that having no constant directions means that
$\Theta(\delta) \in e_\emptyset \otimes \m L$ only for $\delta=0$ (compare Definition \ref{non-constant}).
This implies purely contractive, in fact in the remaining case  $\Theta(\delta) \not\in e_\emptyset \otimes \m L$ it is obvious that
$ \| P_{e_\emptyset \otimes \m L} \Theta(\delta) \| < \|\delta\|$. The converse direction, going from purely contractive to having no constant directions, only fails if there exists $0 \not= \delta \in \m D$ such that $\Theta(\delta) \in e_\emptyset \otimes\m L$
but $\| \Theta(\delta) \| < \|\delta\|$. We show that this contradicts the
Szeg\"{o} condition. In fact, from $M (e_\emptyset \otimes \delta) = \Theta(\delta) \in e_\emptyset \otimes \m L$ we conclude that
$M (e_\emptyset \otimes \delta) \perp (L_j \otimes I) M (\Gamma \otimes \m D) = M (L_j \otimes I) (\Gamma \otimes \m D)$ for all $j=1,\ldots,d$, hence
$M^* M (e_\emptyset \otimes \delta) \perp (L_j \otimes I) (\Gamma \otimes \m D)$ for all $j$ and $M^* M (e_\emptyset \otimes \delta) \in e_\emptyset \otimes \m D$. We conclude that $\Delta^2 (\delta) = e_\emptyset \otimes \delta - M^* M (e_\emptyset \otimes \delta) \in e_\emptyset \otimes \m D$ and
\[
\Delta (\delta) \perp \Delta (L_j \otimes I) (\Gamma \otimes \m D)
\]
for all $j$. Because $ \| \Theta(\delta) \| < \|\delta\|$ we have
$\Delta (\delta) \not= 0$, so the Szeg\"{o} condition cannot hold.
\end{proof}

\begin{corollary} \label{pop-min}
The characteristic function of a completely non-coisometric row contraction has no constant directions and (hence) it is injective. The isometric dilation provided by the functional model is minimal.
\end{corollary}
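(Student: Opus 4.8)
The plan is to derive \textbf{Corollary \ref{pop-min}} directly from results already in hand, chaining together \textbf{Theorem \ref{popescu}}, \textbf{Lemma \ref{pure}}, and \textbf{Proposition \ref{min-dil}}; no new computation should be needed, only the correct sequencing of implications.

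First I would record, via the necessity direction of Theorem \ref{popescu} applied to the characteristic function itself (which coincides with itself through the identity unitaries), that the characteristic function $M_{\Theta_A} : \Gamma \otimes \m D_A \to \Gamma \otimes \m D_{*,A}$ of a completely non-coisometric row contraction $\uu A$ is purely contractive and satisfies the Szeg\"o condition. Since every property entering the argument---purely contractive, the Szeg\"o condition, having no constant directions, injectivity---is invariant under the unitaries $v_{\m D}, v_{\m L}$ implementing coincidence, it does no harm to reason with this operator or with any operator coinciding with it.

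Next I would invoke the second assertion of Lemma \ref{pure}: under the Szeg\"o condition, purely contractive implies having no constant directions. This yields the first claim of the corollary for $\Theta_A$. Injectivity is then immediate, exactly as noted in the proof of Theorem \ref{fm}: if $\Theta_A(\delta) = 0$ then in particular $\Theta_A(\delta) \in e_\emptyset \otimes \m D_{*,A}$, so the absence of constant directions forces $\delta = 0$. For the final assertion I would quote Proposition \ref{min-dil}, which states that the row isometry $\uu V$ of the functional model is a minimal isometric dilation of its compression $\uu A$ precisely when the symbol has no constant directions; applying this to $\Theta_A$ gives minimality of the isometric dilation supplied by the functional model.

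The step I expect to carry the real weight---though it is modest---is the appeal to Lemma \ref{pure}: it is essential that \emph{both} hypotheses are available. Purely contractive alone is strictly weaker than having no constant directions (a constant direction of norm strictly less than $\|\delta\|$ is still purely contractive), and it is only the Szeg\"o condition, guaranteed here by Theorem \ref{popescu}, that excludes such directions and lets the chain of implications close up. Everything else is a direct citation of the propositions established above.
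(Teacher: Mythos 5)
Your proposal is correct and follows essentially the same route as the paper: Theorem \ref{popescu} gives purely contractive plus the Szeg\"{o} condition, Lemma \ref{pure} then yields no constant directions (hence injectivity), and Proposition \ref{min-dil} gives minimality of the functional-model dilation. The only difference is cosmetic --- your explicit remarks on coincidence-invariance and on why no constant directions implies injectivity make explicit what the paper leaves implicit.
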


\begin{proof}
From Theorem \ref{popescu} together with Lemma \ref{pure}
we conclude that we have no constant directions and then 
by Proposition \ref{min-dil} the isometric dilation provided by the functional model is minimal.
The second statement is also part of \cite{Po89b}, Theorem 5.1,
where an alternative proof for it can be found.
\end{proof}

\begin{theorem} \label{cnc}
Let $\uu C=(C_1,\ldots, C_d)$ be a row contraction on a Hilbert space $\m H_C$ and
 \[
 \uu E
= \begin{pmatrix}
           \uu C   &  \uu 0 \\
 	     \uu B   &  \uu A
          \end{pmatrix}
\]
be a minimal contractive lifting on a Hilbert space $\m H_E = \m H_C \oplus \m H_A$ with characteristic function $M_{C,E}: \Gamma \otimes \m D_E \rightarrow \Gamma \otimes \m D_C$. Then
$M_{C,E}$ is purely contractive and satisfies the Szeg\"{o} condition
if and only if $\m L_E = \m L_A$ and $\m L_C = \m L_{*,A}$. 
In this case, with the canonical identifications, the characteristic function $M_{C,E}: \Gamma \otimes \m D_E \rightarrow \Gamma \otimes \m D_C$ of the lifting $\uu E$ is equal to the characteristic function $M_{\Theta_A}: \Gamma \otimes \m D_A \rightarrow \Gamma \otimes \m D_{*,A}$ of the row contraction $\uu A$.

The characteristic function $M_{\Theta_A}$ of any completely non-coisometric row contraction $\uu A$ can be written in this form for any row contraction $\uu C$ with defect equal to $\dim \m D_{*,A}$.

From \cite{Po89b}: Two characteristic functions $M_{\Theta_A}$ and $M_{\Theta_{A'}}$ coincide if and only if $\uu A$ and $\uu A'$ are unitarily equivalent.
\end{theorem}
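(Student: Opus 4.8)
The plan is to carry out the entire analysis inside the functional model of the characteristic function $M_{C,E}:\Gamma\otimes\m D_E\to\Gamma\otimes\m D_C$, so that here $\m D=\m D_E$ and $\m L=\m D_C$. Since $\uu E$ is minimal, Proposition \ref{minimal to injective2} gives that the symbol $\Theta$ of $M_{C,E}$ is injective, and Proposition \ref{A cnc} gives that the corner $\uu A$ is completely non-coisometric, so that $\m L_{*,A}$ and $M_{\Theta_A}$ are defined. The one computation on which everything rests is the determination of $\ker\uu V^*$ inside $\m H_\Theta=(\Gamma\otimes\m D_C)\oplus\overline{\Delta(\Gamma\otimes\m D_E)}$. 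Since $\uu V$ leaves both summands invariant (on the first it is the canonical row shift, on the second it acts by $\Delta\xi\mapsto\Delta(L_j\otimes I)\xi$), I would compute $\sum_j V_j\m H_\Theta$ summand by summand and obtain
\[
\ker\uu V^*=(e_\emptyset\otimes\m D_C)\oplus\big(\overline{\Delta(\Gamma\otimes\m D_E)}\ominus\overline{\Delta((\Gamma\otimes\m D_E)\ominus(e_\emptyset\otimes\m D_E))}\big).
\]
By Definition \ref{sz}, the Szeg\"o condition for $M_{C,E}$ is precisely the statement that the second summand vanishes, i.e.\ that $\ker\uu V^*=e_\emptyset\otimes\m D_C$.

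For the main equivalence I would first record that, under the canonical identifications, $\m L_C=e_\emptyset\otimes\m D_C$, and that by Proposition \ref{min-dil} together with Theorem \ref{fm} the condition $\m L_A=\m L_E$ is equivalent (using injectivity of $\Theta$) to $\Theta$ having no constant directions, which is exactly what makes $\uu V$ on $\m H_\Theta$ a minimal isometric dilation of $\uu A$; in that case $\m H_\Theta=\hat{\m H}_A$, $\uu V=\uu V^A$, and hence $\m L_{*,A}=\ker\uu V^*$. The forward direction is then immediate: if $M_{C,E}$ is purely contractive and satisfies the Szeg\"o condition, Lemma \ref{pure} yields no constant directions, hence $\m L_A=\m L_E$ and $\uu V=\uu V^A$; combining $\m L_{*,A}=\ker\uu V^*$ with the displayed identity $\ker\uu V^*=e_\emptyset\otimes\m D_C=\m L_C$ gives $\m L_C=\m L_{*,A}$. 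For the converse, $\m L_A=\m L_E$ together with injectivity forces no constant directions (hence purely contractive, by Lemma \ref{pure}) and $\uu V=\uu V^A$; then $\m L_C=\m L_{*,A}$ reads $e_\emptyset\otimes\m D_C=\ker\uu V^*$, which by the displayed identity is exactly the Szeg\"o condition.

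For the identification $M_{C,E}=M_{\Theta_A}$ in this case I would observe that both operators are, by construction, the orthogonal projection onto the space $\bigoplus_\alpha V_\alpha\,\m L_{*,A}=\bigoplus_\alpha V_\alpha\,\m L_C=\Gamma\otimes\m D_C$ of the row-shift part of $\uu V=\uu V^A$, restricted to $\bigoplus_\alpha V_\alpha\,\m L_A=\Gamma\otimes\m D_E$; here $\m L_A=\m L_E$ identifies the source and $\m L_C=\m L_{*,A}$ identifies $\m D_C$ with $\m D_{*,A}$ and the shift space with $\bigoplus_\alpha V_\alpha\,\m L_{*,A}$, so with these canonical identifications the two projections coincide. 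For the realization statement I would begin with a completely non-coisometric $\uu A$ whose characteristic function $M_{\Theta_A}$ is injective (indeed has no constant directions) by Corollary \ref{pop-min}, pick any $\uu C$ with $\dim\m D_C=\dim\m D_{*,A}$, identify $\m D_C\cong\m D_{*,A}$, and form the minimal lifting $\uu E:=\m E(M_{\Theta_A})$ of $\uu C$. By Propositions \ref{E minimal} and \ref{ME1} its characteristic function is $M_{\Theta_A}$, which is purely contractive and satisfies the Szeg\"o condition by Theorem \ref{popescu}; so the first part applies, and by Popescu's recovery result (\cite{Po89b}, Theorem 4.1) the corner of $\uu E$ is unitarily equivalent to $\uu A$, as required.

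The coincidence statement I would reduce to our equivalence result rather than merely quote. One direction is classical: a unitary intertwining $\uu A$ and $\uu A'$ induces unitaries of the (star-)defect spaces intertwining the minimal isometric dilations, hence intertwining the characteristic functions, giving coincidence. For the converse I would fix a single $\uu C$ with defect $\dim\m D_{*,A}=\dim\m D_{*,A'}$ and exploit the freedom in the identifications $\m D_C\cong\m D_{*,A}$ and $\m D_C\cong\m D_{*,A'}$: if the coincidence is implemented by unitaries $v_{\m D}$ and $v_{\m L}$, choosing the two identifications to differ exactly by $v_{\m L}$ converts the coincidence relation into an equivalence of symbols into the common space $\Gamma\otimes\m D_C$ in the sense of Definition \ref{def:equivalent}. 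Realizing both as characteristic functions of minimal liftings $\uu E,\uu E'$ of $\uu C$ via $\m E$, Proposition \ref{prop:equivalent} furnishes a unitary $\m H_E\to\m H_{E'}$ that is the identity on $\m H_C$, hence maps $\m H_A$ onto $\m H_{A'}$ and intertwines $\uu A$ and $\uu A'$; since the two corners are unitarily equivalent to $\uu A$ and $\uu A'$, this yields $\uu A\cong\uu A'$. I expect the main obstacle to be the displayed computation of $\ker\uu V^*$ together with the careful bookkeeping of the various canonical identifications (between the $\m L$-spaces, the defect spaces, and the full Fock spaces), which must be tracked precisely so that \emph{coincidence} and our notion of \emph{equivalence} are aligned correctly.
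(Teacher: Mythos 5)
Your proof is correct and follows essentially the same route as the paper: the same reduction via Theorem \ref{fm}, Proposition \ref{min-dil} and Lemma \ref{pure}, the same identification of the Szeg\"{o} condition with $\ker \uu V^* = e_\emptyset \otimes \m D_C$ (your displayed formula just makes explicit what the paper states verbally about the ranges of $\Delta(L_j \otimes I)$), the same restriction-of-projection argument for $M_{C,E} = M_{\Theta_A}$, and the same device of composing the identification unitaries (your "differ exactly by $v_{\m L}$" is the paper's $u = u' \circ v_*$) before applying Proposition \ref{prop:equivalent} to convert coincidence into unitary equivalence. The only cosmetic difference is that in the realization step you invoke Theorem \ref{popescu} and Popescu's recovery theorem directly, where the paper routes this through Corollary \ref{pop-min}; this is harmless for proving the statement, though it would obstruct the paper's later remark that Theorem \ref{cnc} furnishes an independent proof of Theorem \ref{popescu}.
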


\begin{proof}
Note that because $\uu E$ is minimal the symbol $\Theta$ of $M_{C,E}$ is injective by Proposition \ref{minimal to injective2}.
Restricted to $\m H_\Theta = \hat{\m H}_E \ominus \m H_C$ the minimal isometric dilation $\uu V^E$ is given by the functional model row isometry $\uu V$ corresponding to $M_{C,E}$,
as described in Section 2.
With Theorem \ref{fm}(b) and Proposition \ref{min-dil} we see that here $\m L_A = \m L_E$ is equivalent to $\uu V$ on $\m H_\Theta$ being the minimal isometric dilation of $\uu A$. Recall that $\uu V$ can be represented by
\[
 V_j(x \oplus \Delta y) := (L_j \otimes I)x \oplus   \Delta (L_j \otimes I) y
\]
for $x \oplus \Delta y \in (\Gamma \otimes \m D_C) \oplus \Delta (\Gamma \otimes \m D_E)$.
The Szeg\"{o} condition says exactly that the ranges of the maps
$\Delta (L_j \otimes I)$ for all $j=1,\dots,d$  have as their closed linear span the whole space $\overline{\Delta (\Gamma \otimes \m D_E)}$, hence, under the condition $\m L_A = \m L_E$, it is equivalent to
$\m L_{*,A}\, (= \ker \uu V^*) = (e_\emptyset \otimes \m D_C) \oplus 0 = \m L_C$.

We can now verify the first part of the theorem. Note that if we have
$\m L_E = \m L_A$ and $\m L_C = \m L_{*,A}$ then $M_{C,E} = M_{\Theta_A}$ follows from the fact that both are described by the same restriction of an orthogonal projection.
Assume $\m L_E = \m L_A$. Then $\Theta$ has no constant directions by
Theorem \ref{fm}(b),(c)  and hence $M_{C,E}$ is also purely contractive by Lemma \ref{pure}.
Assume additionally $\m L_C = \m L_{*,A}$. Then the Szeg\"{o} condition holds, as shown above.

Conversely from the Szeg\"{o} condition together with $M_{C,E}$ purely contractive we conclude by Lemma \ref{pure} that $\Theta$ has no constant directions which implies $\m L_E = \m L_A$, again by Theorem \ref{fm}(b),(c). As shown above, under this condition the Szeg\"{o} condition also implies $\m L_C = \m L_{*,A}$. The first part of the theorem is proved.

If we start with any completely non-coisometric row contraction $\uu A$ then by Corollary \ref{pop-min} the functional model for $M_{\Theta_A}$ provides a minimal isometric dilation for $\uu A$. If we choose any $\uu C$ with
defect equal to $\dim \m D_{*,A}$ then we can build $\uu E = \m E (M_{\Theta_A})$
(as shown in Section 3) based on any (unitary) identification of $\m D_{*,A}$ and $\m D_C$. Then $\m L_{*,A} = \m L_C$.
Note that also $\m L_A = \m L_E$ from Proposition \ref{min-dil}.
We conclude that $M_{C,E} = M_{\Theta_A}$ by the first part. This shows that indeed
for any completely non-coisometric row contraction $\uu A$ the characteristic function $M_{\Theta_A}$ appears in suitable liftings by $\uu A$ as the characteristic function of the lifting.

If $\uu A$ and $\uu A'$ are unitarily equivalent then clearly $M_{\Theta_A}$ and $M_{\Theta_{A'}}$ coincide (because the minimal isometric dilations are unitarily equivalent). Conversely assume that $M_{\Theta_A}$ and $M_{\Theta_{A'}}$ coincide,
so we have unitaries 
$v: \m D_A \rightarrow \m D_{A'}$ and $v_*: \m D_{*,A} \rightarrow \m D_{*,A'}$ so that $\Theta_{A'} \circ v = (I \otimes v_*) \circ \Theta_A$. 
Now for any $\uu C$ with defect equal to $\dim \m D_{*,A} =  \dim \m D_{*,A'}$, we can choose any unitary $u': \m D_{*,A'} \rightarrow \m D_C$ and use it to identify the spaces $\m D_{*,A'}$ and $\m D_C$.
Then the unitary $u := u' \circ v_*: \m D_{*,A} \rightarrow \m D_C$
can be used to identify $\m D_{*,A}$ and $\m D_C$. Based on these identifications we construct $\uu E = \m E(M_{\Theta_A})$ and
${\uu E}' = \m E(M_{\Theta_{A'}})$ and we can check that, with these identifications, $M_{C,E} = M_{\Theta_A}$ and $M_{C,E'} = M_{\Theta_{A'}}$ are actually equivalent. By Proposition \ref{prop:equivalent} it follows that the corresponding liftings $\uu E$ and $\uu E'$ are unitarily equivalent and this implies, by restriction, the unitary equivalence of $\uu A$ and $\uu A'$.
\end{proof}

We finish this section with a few comments.

The final part of the proof of Theorem \ref{cnc} illustrates the relationship between the concepts of coincidence and equivalence,
compare the comments after Definition \ref{def:equivalent}.

Note that in the proof of Theorem \ref{cnc} the only thing we used from the theory of characteristic functions of completely non-coisometric row contractions (from \cite{Po89b}) was the fact that
the functional model constructed from the characteristic function provides a minimal isometric dilation. Given an independent proof of this fact we have obtained a new proof of Theorem \ref{popescu} because clearly Theorem \ref{popescu} follows from Theorem \ref{cnc}. We claim that the new context of liftings simplifies some of the original arguments. 

While in the theory of characteristic functions of completely non-coiso\-metric row contractions the concept of a purely contractive function is fundamental it seems that in the wider class of characteristic functions of liftings the concept of having no constant directions is more fundamental.

Theorem \ref{cnc} does not solve all natural questions about the relationship between the different notions of characteristic functions:
whenever we have a characteristic function of a lifting then we always have associated the characteristic function of the completely non-coisometric row contraction in the right lower corner. Only in the very special situation of Theorem \ref{cnc} the two functions are equal. 
There are some computations in the general case in \cite{DG11}, section 4, see also \cite{Go12}, section 2, but we leave a more systematic treatment of this topic to future work. 

We finally remark that in the original theory of Sz.-Nagy and Foias
\cite{NF70} for $d=1$ the more general case of a completely non-unitary contraction was treated and that Ball and Vinnikov in \cite{BV05} provided a generalization of it for $d>1$. The relationship of our setting with the theory in \cite{BV05} is another promising field of investigation.

\section{Examples from Schur Functions}

We are content in this section to illustrate our results by examples and do not develop a complete theory.
Recall that an analytic function $\Theta$ in the open unit disk which satisfies $\|f\|_{\infty} \leq 1$ is called a {\em Schur function.} We refer to \cite{Ho88} for further information about Schur functions.
We can think of $\Theta$ as a symbol of a contractive multiplication operator $M_\Theta$ on the {\em Hardy space} $H^2$, the Hilbert space of analytic functions in the open unit disk with square summable Taylor coefficients and isomorphic in this way to the unilateral sequence space $\ell^2$.
This is the special case $d=1$ and $\dim \m D = 1 = \dim \m L$ in our general scheme for contractive multi-analytic operators. Note that, to simplify notation, we use $\Theta$ in two different ways here: the Schur function $z \rightarrow \Theta(z)$ for $z \in \mathbb{C}$ with $|z|<1$ has a sequence of Taylor coefficients which is the image of $1 \in \mathbb{C} \simeq \m D$ under $\Theta$ if used in the way we introduced the symbol for multi-analytic operators. Note also in this respect that identifying a one-dimensional space with $\mathbb{C}$ is not canonical but involves the choice of a unimodular factor. We call the Schur function the {\em spectral representation} of $\Theta$.

Note further that a Schur function
$\Theta$ represents an injective symbol (in the sense used in previous sections) if and only if it is non-zero. Two Schur functions $\Theta$ and $\Theta'$ are equivalent in the sense of
Definition \ref{def:equivalent} if and only if
$\Theta' = c\,\Theta$ with $c \in \mathbb{C}$ and
$|c|=1$.

Let $C$ be a contraction with defect equal to $1$. Then by Theorem \ref{char} we have a one-to-one correspondence between unitary
equivalence classes of minimal contractive liftings $E$ (of $C$) with defect equal to $1$ and non-zero Schur functions (up to unimodular complex factors).
Applying Theorem \ref{fm} we confirm that for all
non-zero Schur functions $\dim \m L_E = 1$ (equal to the defect of $E$) and find in addition that always $\dim \m L_A \le \dim \m D = 1$ and that $\dim \m L_A=1$ if and only if
(the spectral representation of) $\Theta$ is not a constant function.
\\

\begin{example}
\rm
Let $ C = \displaystyle \frac{1}{2}$ be the contraction on a Hilbert space $\m H_C = \mathbb C$. The defect is equal to $1$.
 Consider the M\"{o}bius transformation
 $\Theta(z) = \displaystyle \frac{z - \alpha}{1-\bar{\alpha} z}, |\alpha| < 1.$
Because $\Theta$ is an inner function,  the multiplication operator
$M_{\Theta} :  H^2 \to H^2$ is an isometry and we note that
$\Delta : = (I - M_\Theta^* M_\Theta)^\frac{1}{2} = 0$.
The model space is $\m H_\Theta = H^2 \oplus 0 H^2 = H^2$.
In this case $M_\Theta H^2$ consists of all functions $f \in H^2$
with $f(\alpha)=0$. Hence
$\m H_A = H^2 \ominus M_\Theta H^2$ is one-dimensional and equal to the scalar multiples of the function $z \mapsto \displaystyle \frac{1}{1-\bar{\alpha}z} = \sum^\infty_{n=0} \bar{\alpha}^n z^n$.

We want to construct the lifting $E = \m E_C(M_\Theta) = E_{C,\Theta}$, compare Section 3 for the procedure and for the notation. We find $\hat{\m H} = \m H_C \oplus H^2$
with the isometry $\hat{V}$ so that $\hat{V} (1 \oplus 0) =  \displaystyle \frac{1}{2} \oplus \frac{\sqrt{3}}{2}$ (the second summand is a constant function in $H^2$) and the restriction of $\hat{V}$ to $H^2$ is the multiplication operator $M_z$ with the variable $z$. We compute
$B = P_{\m H_A} \hat{V} |_{\m H_C}
= \displaystyle \frac{\sqrt{3}}{2}(1- |\alpha|^2)^{\frac{1}{2}}$
and $A = P_{\m H_A} \hat{V} |_{\m H_A} = \alpha$ and conclude that the minimal contractive lifting of $ C = \displaystyle \frac{1}{2}$ with the M\"{o}bius transformation $\Theta(z) = \displaystyle \frac{z - \alpha}{1-\bar{\alpha} z}$ as its characteristic function is the (scalar) $2 \times 2$-matrix
 \[
E
= \begin{pmatrix}
            \displaystyle  \frac{1}{2}   &  0
           \vspace{0.1cm} \\
 	     \displaystyle  \frac{\sqrt{3}}{2}(1\!- \!|\alpha|^2)^{\frac{1}{2}}   &  \alpha
          \end{pmatrix}.
\]
It is easy to check that the defect of $E$ is indeed equal to $1$.
As a non-constant inner function the M\"{o}bius transformation $\Theta(z) = \displaystyle \frac{z - \alpha}{1-\bar{\alpha} z}$
clearly satisfies the conditions in Theorem \ref{cnc} and indeed it is well known that $\Theta$  is the characteristic function of the (completely non-coisometric) contraction $A = \alpha$.
 \end{example}

Let us see what Theorem \ref{cnc} tells us in general about Schur functions as characteristic functions. For this consider $H^2 \subset L^2(dt) $, the Hilbert space of square integrable functions on the unit circle with Lebesgue measure. This can be achieved by considering boundary values. For a Schur function $\Theta$ and $f \in H^2$ in general $\Delta_\Theta f = (I-M^*_\Theta M_\Theta)^{\frac{1}{2}} f
\not= (1- |\Theta|^2)^{\frac{1}{2}} f$ (the latter function is not always in $H^2$), but because
\[
\| (I-M^*_\Theta M_\Theta)^{\frac{1}{2}} f \|^2
= \|f\|^2 - \| \Theta f \|^2 = \| (1- |\Theta|^2)^{\frac{1}{2}} f \|^2
\]
we can replace $\Delta_\Theta$ and $\Delta_\Theta H^2$ by
the multiplication with $(1- |\Theta|^2)^{\frac{1}{2}}$ and the subspace $(1- |\Theta|^2)^{\frac{1}{2}} H^2 \subset L^2(dt) $
(using the unitary defined by $\Delta_\Theta f \mapsto (1- |\Theta|^2)^{\frac{1}{2}} f$).

\begin{proposition}  \label{cnc Schur}
Let $\Theta$ be a non-zero Schur function.
\begin{itemize}
\item[(a)] $M_\Theta$ is purely contractive if and only if (in the spectral representation) $|\Theta(0)| < 1$.
\item[(b)] $M_\Theta$ satisfies the Szeg\"{o} condition (see Definition \ref{sz}) if and only if (the spectral representation of) $\Theta$ satisfies the {\em spectral Szeg\"{o} condition}:
\[
\int^{2 \pi}_0 \log \big( 1 - |\Theta(e^{it})|^2 \big) \,dt = - \infty\,.
\]
\item[(c)] A Schur function $\Theta$ is the characteristic function of a completely non-coisometric contraction if and only if it satisfies
$|\Theta(0)| < 1$ and the spectral Szeg\"{o} condition.
\end{itemize}
\end{proposition}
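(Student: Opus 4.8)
My plan is to work entirely in the spectral representation set up in the paragraph preceding the statement, treating (a) by a direct computation of the relevant projection, reducing (b) to the classical Szeg\"{o} distance formula, and deriving (c) by combining (a) and (b) with Theorem \ref{popescu}. For (a) I would simply unwind the definition of purely contractive in the scalar case $d=1$, $\dim \m D = 1 = \dim \m L$. Under the spectral representation the generator $1 \in \mathbb C \cong \m D$ is sent to the Taylor coefficient sequence of $\Theta$ (an element of $\Gamma \otimes \m L \cong H^2$), while $e_\emptyset \otimes \m L$ is the subspace of constant functions. Hence $P_{e_\emptyset \otimes \m L}\,\Theta(1)$ is precisely the constant Taylor coefficient, so that $\| P_{e_\emptyset \otimes \m L}\,\Theta(1)\| = |\Theta(0)|$ while $\|1\| = 1$; the single nontrivial instance of the purely contractive inequality thus reads $|\Theta(0)| < 1$, giving (a) at once.

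For (b) the idea is to rewrite Definition \ref{sz} using the unitary identification from the paragraph above the Proposition, replacing $\Delta_\Theta$ by multiplication with $w := (1 - |\Theta|^2)^{\frac12}$ and $\Delta_\Theta H^2$ by $w\,H^2 \subset L^2(dt)$; note that $w^2 = 1 - |\Theta|^2$ is a nonnegative $L^1$-weight on the circle since $|\Theta| \le 1$ a.e. The subspace $(\Gamma \otimes \m D) \ominus (e_\emptyset \otimes \m D)$ becomes $z H^2$, so the Szeg\"{o} condition becomes the equality $\overline{w H^2} = \overline{w z H^2}$ in $L^2(dt)$. Since $z H^2 \subset H^2$ gives the trivial inclusion, and since $\overline{w H^2}$ is the closed span of $\{ w z^n : n \ge 0 \}$ (multiplication by $w$ being a contraction on $L^2$ applied to the polynomials), equality holds if and only if the single generator $w = w \cdot 1$ lies in $\overline{w z H^2}$. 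Computing the distance, $w \in \overline{w z H^2}$ if and only if $\inf \int_0^{2\pi} |1-p|^2 (1-|\Theta|^2)\,dt = 0$, the infimum taken over analytic polynomials $p$ with $p(0) = 0$ (these being dense in $z H^2$). The classical Szeg\"{o} theorem evaluates this extremal quantity as the geometric mean $\exp\big( \frac{1}{2\pi} \int_0^{2\pi} \log(1-|\Theta|^2)\,dt \big)$, which vanishes exactly when $\int_0^{2\pi} \log(1-|\Theta|^2)\,dt = -\infty$, yielding (b).

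Finally, (c) is immediate from the previous two parts: by Theorem \ref{popescu} a contractive multi-analytic operator coincides with the characteristic function of a completely non-coisometric row contraction precisely when it is purely contractive and satisfies the Szeg\"{o} condition, and for scalar Schur functions coincidence amounts to equality up to a unimodular factor, so combining this with (a) and (b) gives the criterion $|\Theta(0)| < 1$ together with the spectral Szeg\"{o} condition. The main obstacle will be (b): one must set up the reduction of the abstract Szeg\"{o} condition to the scalar extremal problem with care, in particular verifying that multiplication by $z = e^{it}$ is unitary on $L^2(dt)$ so that it preserves closures, that density of polynomials vanishing at $0$ reduces the distance to the classical extremal quantity, and that the correct form of the Szeg\"{o} theorem (the geometric-mean evaluation) is being applied to the weight $1 - |\Theta|^2$.
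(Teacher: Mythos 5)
Your proposal is correct and follows essentially the same route as the paper: (a) by identifying $P_{e_\emptyset \otimes \m L}\,\Theta(1)$ with the zeroth Taylor coefficient, and (b) by transferring the abstract Szeg\"{o} condition via the unitary $\Delta_\Theta f \mapsto (1-|\Theta|^2)^{\frac{1}{2}} f$ to the classical Szeg\"{o} extremal problem for the weight $1-|\Theta|^2$ (the paper likewise cites Hoffman, Chapter 4). The only cosmetic difference is in (c): you invoke Theorem \ref{popescu} directly, whereas the paper phrases (c) as a reformulation of its own Theorem \ref{cnc}; since the paper observes that Theorem \ref{popescu} follows from Theorem \ref{cnc}, the two citations are interchangeable here.
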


\begin{proof}
(a) is obvious because $\Theta(0)$ is equal to the zero'th Taylor coefficient of $\Theta$. With the considerations preceding this proposition we see that the Szeg\"{o} condition (from Definition \ref{sz}) holds if and only if the constant polynomial $1$ can be approximated in norm by analytic polynomials $p$ with
$p(0)=0$ in the Hilbert space $L^2 \big( (1- |\Theta|^2)\,dt \big)$.
Here the non-negative function $ 1- |\Theta|^2$ plays the role of a density for Lebesgue measure. By a classical theorem of Szeg\"{o}
this is the case if and only if the spectral Szeg\"{o} condition holds,
see \cite{Ho88}, Chapter 4.
This proves (b). Now (c) is just the reformulation of Theorem \ref{cnc} in the setting of Schur functions.
\end{proof}

It is a classical result that a Schur function $\Theta$ satisfies what we have called the spectral Szeg\"{o} condition if and only if it is an extremal Schur function, see \cite{Ho88}, Chapter 9.
Generalizations of this result for the case of operator-valued Schur functions (still with $d=1$) are given in \cite{Tr89}, generalizations for $d>1$ are discussed in \cite{Po06b}, Section 1.4, in connection with the notion of prediction entropy.

Note at this point that any non-zero Schur function is the characteristic function of a lifting but only a very special subclass consists of  characteristic functions of completely non-coisometric contractions. We give an example outside of this special subclass.

\begin{example}
\rm
Again we start with the contraction $ C = \displaystyle \frac{1}{2}$
on a one-dimensional Hilbert space $\m H_C$ but this time we look at the Schur function $\Theta(z) = \displaystyle \frac{z}{2}$. Clearly this does not satisfy the spectral Szeg\"{o} condition. Again we want to construct
the minimal contractive lifting $E = E_{C,\Theta}$.

In this case the model space is $\m H_\Theta = H^2 \oplus  \displaystyle  \frac{\sqrt{3}}{2} H^2 = H^2 \oplus H^2$. Hence
$\hat{\m H} = \m H_C \oplus H^2 \oplus H^2$ and the isometry
$\hat{V}$ maps $1 \oplus 0 \oplus 0$ to $ \displaystyle  \frac{1}{2} \oplus \frac{\sqrt{3}}{2} \oplus 0$ and acts as $M_z \oplus M_z$ on
$H^2 \oplus H^2$. Further $g \oplus h \in \m H_A \subset \m H_\Theta = H^2 \oplus H^2$ if and only if
\[
\langle \frac{1}{2} M_z f \oplus \frac{\sqrt{3}}{2} f, g \oplus h \rangle = 0 \quad \text{for all} \; f \in H^2,
\]
which is satisfied if and only if $h = -  \displaystyle \frac{1}{\sqrt{3}} M^*_z g$.
So we have
\[
\m H_A = \{ g \oplus - \frac{1}{\sqrt{3}} M^*_z g \colon g \in H^2 \}.
\]
Here $B(c) = P_{\m H_A} \hat{V} (c) =  \displaystyle \frac{\sqrt{3}}{2} c \oplus 0$ for $c \in \m H_C$ (note that $M^*_z$ applied to a constant function yields zero)
and for $A = P_{\m H_A} \hat{V} |_{\m H_A}$ we find
\[
A (g \oplus - \frac{1}{\sqrt{3}} M^*_z g)
= P_{\m H_A} \big( M_z g \oplus - \frac{1}{\sqrt{3}} (g - g(0)) \big)
= M_z (g - \frac{g(0)}{4}) \oplus - \frac{1}{\sqrt{3}} (g - \frac{g(0)}{4}).
\]
The last equality can be checked by verifying that the difference
$M_z  \displaystyle \frac{g(0)}{4} \oplus -\frac{1}{\sqrt{3}}(-\frac{3}{4}g(0))$ is indeed orthogonal to $\m H_A$.
\end{example}

We note that any factorization $\Theta = \Theta_1 \Theta_2$ among non-zero Schur functions leads to a corresponding factorization of liftings, by Theorem \ref{factor}. In particular the lifting in the previous example can be factorized in many ways. But we postpone a detailed investigation of this phenomenon to another place.

We finally remark that the general case of minimal contractive liftings for a single contraction (the general case $d=1$) can be handled in a similar way. In fact, for $d=1$ a multi-analytic operator has a spectral representation by an operator-valued Schur function, i.e., a bounded analytic function $\Theta$ on the open unit disk such that $\Theta(z) \in \m B(\m D, \m L)$ and $\| \Theta(z) \| \le 1$ for all $z$ in the open unit disk. Then Theorem \ref{char} takes the form that there is a one-to-one correspondence between unitary equivalence classes of minimal contractive liftings $E$ with defect $\dim \m D$ (of a given contraction $C$) and operator-valued Schur functions with values in $\m B(\m D, \m D_C)$, up to a unitary on $\m D$ (the same for all $z$), and with the following injectivity property: if $0 \not= \delta \in \m D$ then $z \mapsto \Theta(z) \delta$ is not the zero function.

\section{Appendix}

In our analysis of functional models in Section 2 we need a few technical results about how the kernel of the adjoint changes if we go to a restriction of an operator. We provide them in the following lemma. These are quite general observations, useful in particular to describe the geometry of invariant subspaces.

\begin{lemma} \label{inv}
 Let $T \in \m  B (\displaystyle  \m H_1, \m H_2)$ where  $\m H_1, \m H_2$ are  Hilbert spaces and $\m K_i \subset \m H_i,\; i=1,2,$
 be subspaces such that $T \m K_1 \subset \m K_2$. Define $\tilde{T}:=T|_{\m K_1} \colon \m K_1 \mapsto \m K_2$ and  $\m N_i := \m H_i\ominus \m K_i,\; i=1,2$.
Then
\begin{itemize}
 \item [(i)] 
$\ker {\tilde T}^*= \{\xi \in \m K_2:  T^* \xi \in \m N_1 \}$.
\item [(ii)] 
$P_{\m K_2}\, \ker  T^* \subset \ker {\tilde T}^*$.
\end{itemize}

\noindent
Assume in addition that $T$ is an isometry. Set
$\m L := \overline{\rm{span}}\, \{\m N_2,\,  T \m N_1\} \ominus\m N_2$. Then

\begin{itemize}
\item[(iii)]
$\overline{\rm span}\,\{\m L, P_{\m K_2}\, \ker  T^*\} = \ker {\tilde T}^* = \m L \oplus(\ker  T^*\cap\ \m K_2)$.
\end{itemize}
\end{lemma}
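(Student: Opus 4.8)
The plan is to prove statement (iii) by reducing everything to the characterization of $\ker\tilde T^*$ furnished by part (i): for $\xi \in \m K_2$ one has $\xi \in \ker\tilde T^*$ exactly when $T^*\xi \in \m N_1$, equivalently $\tilde T^* = P_{\m K_1}T^*|_{\m K_2}$. Throughout I would abbreviate $\m S := \ov{\rm span}\{\m N_2, T\m N_1\}$, so that by definition $\m L = \m S \ominus \m N_2 = \m S \cap \m K_2$ (using $\m N_2^\perp = \m K_2$ inside $\m H_2$, and $\m N_2 \subset \m S$), and hence $\m S = \m N_2 \oplus \m L$. The strategy is first to establish the orthogonal decomposition $\ker\tilde T^* = \m L \oplus (\ker T^* \cap \m K_2)$, and then to read off the span equality from it.

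For the two easy inclusions I would use part (i). A vector $\xi \in \ker T^* \cap \m K_2$ has $T^*\xi = 0 \in \m N_1$, so it lies in $\ker\tilde T^*$. To see $\m L \subset \ker\tilde T^*$, the key point—and this is exactly where the isometry hypothesis enters—is that $T^*\m S \subset \m N_1$: on the generating vectors $n + Tm$ (with $n \in \m N_2$, $m \in \m N_1$) one computes $T^*(n+Tm) = T^*n + m$ using $T^*T = I$, where $m \in \m N_1$ already and $T^*n \perp \m K_1$ because $\langle T^*n, \eta\rangle = \langle n, T\eta\rangle = 0$ for $\eta \in \m K_1$ (as $T\eta \in \m K_2 \perp \m N_2 \ni n$). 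Since $\m N_1$ is closed and $T^*$ is bounded, $T^*\m S \subset \m N_1$ follows by continuity, and as $\m L \subset \m S \cap \m K_2$ part (i) gives $\m L \subset \ker\tilde T^*$. Orthogonality of the two pieces is immediate: any $\xi \in \ker T^* \cap \m K_2$ is perpendicular to $\m N_2$ (being in $\m K_2$) and to $T\m N_1$ (since $\langle \xi, Tm\rangle = \langle T^*\xi, m\rangle = 0$), hence $\xi \perp \m S \supset \m L$.

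The substantive inclusion is $\ker\tilde T^* \subseteq \m L \oplus (\ker T^* \cap \m K_2)$. Here I would take $r$ in the orthocomplement $\ker\tilde T^* \ominus \m L$ and show $r \in \ker T^* \cap \m K_2$; since the reverse containment was just proven, this pins down the complement exactly. By construction $r \in \m K_2$ and $r \perp \m L$; combined with $r \perp \m N_2$ (from $r \in \m K_2$) and $\m S = \m N_2 \oplus \m L$, this yields $r \perp \m S$, in particular $r \perp T\m N_1$. To conclude $T^*r = 0$ I would test against an arbitrary $h = \eta + m \in \m K_1 \oplus \m N_1$: the $\m N_1$-part contributes $\langle r, Tm\rangle = 0$ because $r \perp T\m N_1$, while the $\m K_1$-part contributes $\langle T^*r, \eta\rangle = \langle \tilde T^*r, \eta\rangle = 0$ because $r \in \ker\tilde T^*$. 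Hence $T^*r = 0$, so $r \in \ker T^* \cap \m K_2$, which establishes the decomposition.

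Finally the span equality follows formally. One inclusion uses part (ii), $P_{\m K_2}\ker T^* \subset \ker\tilde T^*$, together with $\m L \subset \ker\tilde T^*$. For the reverse, note that $P_{\m K_2}$ acts as the identity on $\ker T^* \cap \m K_2$, so $\ker T^* \cap \m K_2 \subset P_{\m K_2}\ker T^*$; combined with the decomposition this gives $\ker\tilde T^* = \m L \oplus (\ker T^* \cap \m K_2) \subset \ov{\rm span}\{\m L,\, P_{\m K_2}\ker T^*\}$, and equality follows. I expect the only delicate point to be the continuity/closure argument upgrading $T^*(n+Tm) \in \m N_1$ on the dense generating set to $T^*\m S \subset \m N_1$, while keeping careful track that the isometry assumption is used precisely through $T^*T = I$.
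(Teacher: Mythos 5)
Your proof of (iii) is correct and follows essentially the same route as the paper's: the same orthogonal decomposition $\ker {\tilde T}^* = \m L \oplus (\ker T^* \cap \m K_2)$, established by the same steps (both pieces lie in $\ker {\tilde T}^*$ via part (i) and the isometry-based fact $T^*\,\ov{\rm span}\{\m N_2, T\m N_1\} \subset \m N_1$; they are orthogonal; and any $r \in \ker {\tilde T}^* \ominus \m L$ satisfies $T^*r = 0$), followed by the same formal deduction of the span equality from part (ii). The only caveat is that (i) and (ii) are themselves assertions of the lemma and you cite them rather than prove them; both are routine --- (i) is the adjoint unwinding you state in the equivalent form ${\tilde T}^* = P_{\m K_1} T^*|_{\m K_2}$, and (ii) follows by decomposing $\xi \in \ker T^*$ and using $T^*\m N_2 \subset \m N_1$, a relation you do verify inside your second step --- so a complete write-up should add these two short arguments.
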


\begin{proof}(i)
 For $\xi\in \m K_2$
 \begin{eqnarray*}
\xi\in \ker {\tilde T}^*
&\Leftrightarrow&  \langle \xi,  {\tilde T} \eta\rangle = 0
 ~{\rm for~ all }~ \eta \in   \m K_1\\
&\Leftrightarrow&  \langle \xi,  T\eta\rangle = 0
 ~{\rm for~ all }~ \eta \in  \m K_1\\
&\Leftrightarrow&  \langle  T^*\xi, \eta\rangle = 0
 ~{\rm for~ all }~ \eta \in  \m K_1\\
&\Leftrightarrow&  T^*\xi \in  \m N_1.
\end{eqnarray*}

\noindent(ii)  Decompose $\xi \in \ker  T^*$
as $\xi = \xi_{\m K_2} \oplus \xi_{\m N_2}$ such that $\xi_{\m K_2} \in \m K_2$ and $\xi_{\m N_2} \in \m N_2$.
Since $T^* (\m N_2) \subset \m N_1$ it follows that
$ T^*\xi_{\m N_2} \in   \m N_1$. Thus $T^*\xi_{\m K_2} =  T^* \xi -  T^*\xi_{\m N_2}
=0- T^*\xi_{\m N_2} \in  \m N_1$. We conclude that $\xi_{\m K_2} \in  \ker {\tilde T}^*$ by (i). Finally because $P_{\m K_2}\, \xi = \xi_{\m K_2}$ we obtain
$P_{\m K_2} \; \ker T^* \subset \ker {\tilde T}^*$
which is (ii).

\noindent(iii)  
First note that if 
$\xi \in \ker T^* \cap\ \m K_2$ then
$\xi \perp \overline {{\rm range}\, T} \supset  T \m N_1$
and $\xi \perp  \m N_2$. Therefore  $\ker T^* \cap\ \m K_2 \perp \m L$.

Further $\m L \subset \m N_2^\perp = \m K_2$ and $T^* \m L \subset T^*\big[\overline{\rm span}\,\{\m N_2, T  \m N_1\} \big] = \m N_1$, by the assumption that $T$ is an isometry. So $\m L \subset \ker {\tilde T}^*$ by (i). Clearly
$\ker T^* \cap\ \m K_2 \subset \ker {\tilde T}^*$ by (i). Together we have $\m L \oplus(\ker  T^*\cap\ \m K_2) \subset \ker {\tilde T}^*$.

For the opposite inclusion let $\xi \in \ker {\tilde T}^* \ominus \m L$.
Because $\xi \in \ker {\tilde T}^*$ we have $\xi \in \m K_2$ and
$T^* \xi \in \m N_1$ by (i). But from $\xi \perp \m L$ and $\xi \perp \m N_2$ we also get, using the definition of $\m L$, that $\xi \perp T \m N_1, i.e., T^* \xi \perp \m N_1$. Hence $T^* \xi = 0$ and
$\xi \in \ker  T^*\cap\ \m K_2$. We have now established the second equality in (iii).

Finally it is clear that $\ker T^* \cap \m K_2 \subset P_{\m K_2}\,\ker T^*$, hence $\m L \oplus(\ker  T^*\cap\ \m K_2) \subset
\overline{\rm span}\,\{\m L, P_{\m K_2}\, \ker  T^*\}$. On the other hand we have seen above that $\m L \subset \ker {\tilde T}^*$ and we have $P_{\m K_2}\,\ker T^* \subset \ker {\tilde T}^*$ by (ii).
Hence we also have 
$\overline{\rm span}\,\{\m L, P_{\m K_2}\, \ker  T^*\}
\subset \ker {\tilde T}^* = \m L \oplus(\ker  T^*\cap\ \m K_2)$ and we have proved the first equality in (iii).
\end{proof}

\vspace{0.5cm}

{\bf Acknowledgement:} We would like to thank both referees for extraordinarily rich comments resulting in a number of improvements of the original manuscript.

\end{document}